\documentclass[a4paper,reqno,11pt]{amsart}
\usepackage{amsmath, amsbsy, amsthm, amssymb, amsfonts, latexsym, mathrsfs, color}
\usepackage{stmaryrd}
\usepackage{bm}

\usepackage{graphicx,float,anysize}         
\usepackage[top=28mm,right=28mm,bottom=28mm,left=28mm]{geometry}

\definecolor{darkblue}{rgb}{0,0,0.8}

\newtheorem{theorem}{Theorem}[section]

\newtheorem{lemma}[theorem]{Lemma}

\newtheorem{corollary}[theorem]{Corollary}

\newtheorem{Theorem}{Theorem}
\newtheorem{Corollary}[Theorem]{Corollary}

\theoremstyle{definition}

\newtheorem{remark}[theorem]{Remark}

\newtheorem*{Remark}{Remark}
\newtheorem*{Definition}{Definition}

\DeclareMathOperator{\PGL}{\mathrm{PGL}}

\DeclareMathOperator{\POmega}{\mathrm{P}\Omega}

\DeclareMathOperator{\PSL}{\mathrm{PSL}}

\newcommand{\norml}{\vartriangleleft}
\DeclareMathOperator{\Wr}{wr}

\DeclareMathOperator{\Aut}{Aut}

\DeclareMathOperator{\Sym}{Sym}

\DeclareMathOperator{\soc}{soc}

\newcommand{\la}{\langle}
\newcommand{\ra}{\rangle}

\newcommand{\normeq}{\trianglelefteqslant}

\renewcommand{\a}{\alpha}
\renewcommand{\b}{\beta}

\renewcommand{\bf}{\textbf}

 \renewcommand{\to}{\rightarrow}

\newcommand{\leqs}{\leqslant}
\newcommand{\geqs}{\geqslant}

 \newcommand{\vs}{\vspace{3mm}}

\begin{document}

\title[Permutation groups and derangements of odd prime order]{Permutation groups and derangements \\ of odd prime order}

\author{Timothy C. Burness}
\address{T.C. Burness, School of Mathematics, University of Bristol, Bristol BS8 1TW, UK}
\email{t.burness@bristol.ac.uk}

\author{Michael Giudici}\thanks{Both authors thank two anonymous referees for helpful comments. The second author is supported by the Australian Research Council Grant DP160102323}
\address{M. Giudici, School of Mathematics and Statistics, The University of Western Australia, 35 Stirling Highway, Crawley WA 6009, Australia}
\email{michael.giudici@uwa.edu.au}

\date{\today}

\begin{abstract}
Let $G$ be a transitive permutation group of degree $n$. We say that $G$ is $2'$-elusive if $n$ is divisible by an odd prime, but $G$ does not contain a derangement of odd prime order. In this paper we study the structure of quasiprimitive and biquasiprimitive $2'$-elusive permutation groups, extending earlier work of Giudici and Xu on elusive groups. As an application, we use our results to investigate automorphisms of finite arc-transitive graphs of prime valency. 
\end{abstract}

\maketitle

\section{Introduction}\label{s:intro}

Let $G \leqs {\rm Sym}(\Omega)$ be a transitive permutation group on a finite set $\Omega$ of size at least $2$. An element $x \in G$ is a \emph{derangement} if it acts fixed-point-freely on $\Omega$. Equivalently, if $H$ is a point stabiliser, then $x$ is a derangement if and only if the conjugacy class of $x$ fails to meet $H$. An easy application of the Orbit-Counting Lemma shows that $G$ contains derangements. This classical theorem of Jordan has interesting applications in number theory and topology (see Serre's article \cite{Serre}, for example).

By a theorem of Fein, Kantor and Schacher \cite{FKS}, $G$ contains a derangement of prime power order. This result turns out to have some important number-theoretic applications; for example, it implies that the relative Brauer group of any nontrivial extension of global fields is infinite (see \cite[Corollary 4]{FKS}). It is worth noting that the existence of a derangement of prime power order in \cite{FKS} requires the Classification of Finite Simple Groups. In most cases, $G$ contains a derangement of prime order, but there are some exceptions, such as the $3$-transitive action of the smallest Mathieu group ${\rm M}_{11}$ on $12$ points. The transitive permutation groups with this property are called \emph{elusive} groups, and they have been the subject of many papers in recent years; see \cite{CGJKKMN, G, GK, GMPV, GX, xu}, for example. 

A local notion of elusivity was introduced in \cite{BGW}. Let $G \leqs {\rm Sym}(\Omega)$ be a finite transitive permutation group and let $r$ be a prime divisor of $|\Omega|$. We say that $G$ is \emph{$r$-elusive} if it does not contain a derangement of order $r$ (so $G$ is elusive if and only if it is $r$-elusive for every prime divisor $r$ of $|\Omega|$). In \cite{BGW}, all the $r$-elusive primitive almost simple groups with socle an alternating or sporadic group are determined. This work has been extended in our recent book \cite{BG}, which provides a detailed study of $r$-elusive classical groups. The $r$-elusive notion leads naturally to the definition of a $2'$-elusive permutation group, which are the main focus of this paper. 

\begin{Definition}
A finite transitive permutation group $G \leqs {\rm Sym}(\Omega)$ is \emph{$2'$-elusive} if $|\Omega|$ is divisible by an odd prime, but $G$ does not contain a derangement of odd prime order. 
\end{Definition}

Let $G \leqs {\rm Sym}(\Omega)$ be a transitive permutation group with point stabiliser $H$. Recall that $G$ is \emph{primitive} if $H$ is a maximal subgroup of $G$, and note that 
every nontrivial normal subgroup of a primitive group is transitive. This observation suggests a natural generalisation of primitivity; we say that $G$ is \emph{quasiprimitive} if every nontrivial normal subgroup is transitive. Similarly, $G$ is \emph{biquasiprimitive} if every nontrivial normal subgroup has at most two orbits on $\Omega$, and there is at least one nontrivial normal subgroup with two orbits. 

Quasiprimitive and biquasiprimitive groups arise naturally in the study of finite vertex-transitive graphs. For example, if $G$ is a vertex-transitive group of automorphisms of a graph $\Gamma$ such that for each vertex $v$, the action of the vertex stabiliser $G_v$ on the set of neighbours of $v$ is quasiprimitive (that is, $\Gamma$ is \emph{$G$-locally-quasiprimitive}), then \cite[Lemma 1.6]{P85} implies that every normal subgroup $N$ of $G$ with at least three orbits is semiregular (that is, $N_v=1$ for every vertex $v$). In this situation, the quotient graph with respect to the orbits of such a normal subgroup inherits many of the symmetry properties of the original graph $\Gamma$. This explains why quasiprimitive and biquasiprimitive groups often arise as base cases in the analysis of various families of vertex-transitive graphs, see for example \cite{DGLP,P}. These important graph-theoretic applications motivated Praeger to establish detailed structure theorems for quasiprimitive \cite{P} and biquasiprimitive groups \cite{Praeger}. The structure theorem for quasiprimitive groups is similar to the celebrated O'Nan-Scott Theorem for primitive groups.

The elusive quasiprimitive permutation groups have been determined by Giudici (see \cite[Theorem 1.1]{G}); the only examples are primitive groups of the form $G={\rm M}_{11} \Wr K$ in its product action on $\Omega=\Delta^{k}$, where $K \leqs S_k$ is transitive and $|\Delta|=12$.  Further progress has been made by Giudici and Xu in \cite{GX}, where the biquasiprimitive elusive groups are determined (see \cite[Theorem 1.4]{GX}). As an application, they prove that every finite vertex-transitive, locally-quasiprimitive graph $\Gamma$ has a semiregular automorphism (in other words, the automorphism group ${\rm Aut}(\Gamma)$, viewed as a permutation group on the set of vertices of $\Gamma$, contains a derangement of prime order);  
see \cite[Theorem 1.1]{GX}. This result settles an important case of the \emph{Polycirculant Conjecture} from 1981, which asserts that every finite vertex-transitive digraph has a semiregular automorphism \cite{CGJKKMN, maru81}. For example, \cite[Theorem 1.1]{GX} immediately implies that the conjecture holds for every finite arc-transitive graph of prime valency.

The main goal of this paper is to extend this earlier work from elusive to $2'$-elusive groups. We begin by determining the primitive $2'$-elusive groups.

\begin{Theorem}\label{t:prim2dash}
Let $G \leqslant {\rm Sym}(\Omega)$ be a finite primitive permutation group.
Then $G$ is $2'$-elusive if and only if 
$$\soc(L)^k\normeq G \leqs L \Wr K$$ 
and $G$ acts with its product action on 
$\Omega = \Delta^k$ for some $k \geqs 1$, where $ L \leqs {\rm Sym}(\Delta)$ is almost simple and primitive with stabiliser $J$, $G$ induces the transitive subgroup $K \leqs S_k$ on the set of $k$ simple direct factors of $\soc(G)= \soc(L)^k$ and one of the following holds:
\begin{itemize}\addtolength{\itemsep}{0.2\baselineskip}
\item[{\rm (i)}] $L = {\rm M}_{11}$ and $J=\PSL_2(11)$; 
\item[{\rm (ii)}] $L= {}^2F_4(2)$ and $J = \PSL_2(25).2_3$.
\end{itemize}
\end{Theorem}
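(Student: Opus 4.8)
The plan is to prove both implications, with a product-action lemma handling sufficiency and the O'Nan--Scott theorem driving necessity. For sufficiency, I would first record the lemma that if $L \leqs {\rm Sym}(\Delta)$ is $r$-elusive for an odd prime $r$, then every $G$ with $\soc(L)^k \normeq G \leqs L \Wr K$ in product action on $\Delta^k$ is again $r$-elusive. The proof is a blockwise fixed-point count: take $g=(l_1,\dots,l_k)\pi \in L \Wr K$ of order $r$ and analyse it along the cycles of $\pi \in S_k$, whose lengths are $1$ or $r$. On a fixed coordinate $g$ restricts to some $l_i$ of order $1$ or $r$, which fixes a point of $\Delta$ by hypothesis; on a cycle of length $r$ the relation $g^r=1$ forces the corresponding cycle-product in $L$ to be trivial, whence a telescoping computation yields a fixed point on that block. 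Thus $g$ fixes a point of $\Delta^k$, so $G$ has no derangement of order $r$. Applying this for every odd prime $r \mid |\Delta|$ reduces sufficiency to verifying that $L={\rm M}_{11}$ on $12$ points and $L={}^2F_4(2)$ on the cosets of $J=\PSL_2(25).2_3$ are themselves $2'$-elusive: for ${\rm M}_{11}$ this follows from its classical elusivity, while for ${}^2F_4(2)$ the degree is $2^8\cdot 3^2$, so only $r=3$ need be treated, and it is a finite check from the conjugacy class and maximal subgroup data that every element of order $3$ is conjugate into $J$.

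For necessity, suppose $G$ is primitive and $2'$-elusive and run through the O'Nan--Scott types. Affine type is excluded because $|\Omega|=p^d$ forces $p$ odd (as $|\Omega|$ has an odd prime divisor), and then a nontrivial translation is a derangement of order $p$. The holomorphic types and the twisted wreath type each contain a regular normal subgroup isomorphic to $T$ or $T^k$, and since $|T|$ is divisible by an odd prime an element of that order there is a derangement. For simple diagonal type, with socle $T^k$ acting by right multiplication on the cosets of the straight diagonal $D$, I would exhibit the explicit derangement $x=(s,1,\dots,1) \in T^k \leqs G$ with $s \in T$ of odd prime order: the coset $Dg$ with $g=(g_1,\dots,g_k)$ is fixed only if $gxg^{-1}=(g_1 s g_1^{-1},1,\dots,1)$ lies in $D$, forcing $s=1$, a contradiction. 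Compound diagonal type is then excluded by taking the coordinate-constant element built from this derangement in the associated product action. This leaves only the almost simple and product action types.

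In both surviving types there is an almost simple primitive $L \leqs {\rm Sym}(\Delta)$ with socle $T$ and $G \leqs L \Wr K$ (the almost simple case being $k=1$). Using the socle $T^k \leqs G$, a derangement $s \in T$ of odd prime order on $\Delta$ would yield the derangement $(s,\dots,s)$ of $G$ on $\Delta^k$; combined with the lemma above this reduces the whole problem to classifying the $2'$-elusive almost simple primitive groups, the only delicate point being the limited analysis of order-$r$ elements of $L$ outside $T$ when $r \mid |L/T|$. I would then partition by the isomorphism type of $T$: for $T$ alternating or sporadic the $r$-elusive data of \cite{BGW} leaves exactly ${\rm M}_{11}$ on $12$ points; for $T$ classical the analysis of \cite{BG} shows that no example is simultaneously $r$-elusive for all odd primes $r$ dividing the degree; and for $T$ of exceptional Lie type a direct study of the maximal subgroups and of the semisimple and unipotent classes of odd prime order singles out ${}^2F_4(2)$ with $J=\PSL_2(25).2_3$.

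The main obstacle is precisely this almost simple classification, and within it the classical and exceptional cases: for each candidate pair $(L,J)$ one must control, simultaneously for every odd prime $r$ dividing $[L:J]$, the fusion of all semisimple and unipotent elements of order $r$ into the point stabiliser $J$. Eliminating the overwhelming majority of candidates and isolating the single exceptional survivor ${}^2F_4(2)$ is where essentially all the effort lies; by comparison the O'Nan--Scott reduction and the product-action lemma are routine.
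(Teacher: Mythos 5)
Your proposal is correct in outline, but it takes a genuinely different route from the paper. The sufficiency half essentially coincides: your blockwise fixed-point count in $L \Wr K$ is exactly the paper's Lemma \ref{lem:wr} (the argument of \cite[Theorem 4.1(e)]{CGJKKMN}), and your two base-case verifications match Theorem \ref{t:2dashas}. For necessity, however, the paper never invokes the O'Nan--Scott theorem: it proves Theorems \ref{t:prim2dash} and \ref{t:qprim2dash} simultaneously for quasiprimitive $G$, first showing the minimal normal subgroup $N=T^k$ is unique (two minimal normal subgroups would commute, hence be regular and nonabelian by \cite[Theorem 4.2A]{DM}, giving derangements of odd prime order), and then using the fact that $N$ is transitive, hence itself $2'$-elusive, so that Lemma \ref{l:socle} forces each $H\cap T_i$ to be nontrivial and to meet every $T_i$-class of elements of odd prime order. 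Note that this single condition silently performs all of your type eliminations: regular and diagonal socle actions have $H\cap T_i=1$, and the abelian case is dispatched directly, so your affine, holomorph, twisted wreath, simple diagonal and compound diagonal derangement constructions, while perfectly sound, are not needed. The real divergence is at the simple-group classification, which you correctly identify as the crux but leave unexecuted: you propose assembling it family by family from the $r$-elusive data of \cite{BGW} and \cite{BG} together with an ad hoc analysis of semisimple and unipotent classes in exceptional groups, whereas the paper's Theorem \ref{thm:autT} exploits the observation that a subgroup meeting every class of odd-prime-order elements must have order divisible by every odd prime divisor of $|T|$, so the Liebeck--Praeger--Saxl tables \cite[Tables 10.1--10.7]{LPS} collapse all families -- classical and exceptional alike -- to a short list of candidate pairs, finished off with \cite{G}, \cite{BGW}, Sylvester's theorem and checks in the Atlas \cite{atlas} and {\sc Magma} \cite{magma}. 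What your route buys is a self-contained reduction through the standard primitive types; what the paper's route buys is both this decisive shortcut (your classical and exceptional analysis, as written, is an assertion of feasibility rather than a proof, and is far heavier than necessary) and, with the same work, the imprimitive quasiprimitive classification in the $\PSL_2(p)$ branch of Theorem \ref{t:qprim2dash}, which a purely O'Nan--Scott analysis of primitive groups would not deliver.
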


\begin{Remark}\label{r:prim}
Let us make some comments on the statement of Theorem \ref{t:prim2dash}.
\begin{itemize}\addtolength{\itemsep}{0.2\baselineskip}
\item[{\rm (a)}] In both cases that arise, $3$ is the only odd prime divisor of $|\Omega|$. 
\item[{\rm (b)}] In case (i), $G= {\rm M}_{11} \Wr K$, $G_{\a} = \PSL_2(11) \Wr K$  (arising from the action of ${\rm M}_{11}$ on the cosets of a subgroup $\PSL_2(11)$) and $G$ is elusive. 
\item[{\rm (c)}] In (ii), $\PSL_2(25).2_3$ is an almost simple nonsplit extension.
\item[{\rm (d)}] As noted above, the examples in (i) are the only primitive elusive groups, so Theorem \ref{t:prim2dash} shows that the $2'$-elusivity property is indeed weaker than elusivity (even for primitive groups).
\end{itemize}
\end{Remark}

Our next result describes the structure of the quasiprimitive $2'$-elusive groups (in view of Theorem \ref{t:prim2dash}, we may assume that $G$ is imprimitive). 

\begin{Theorem}\label{t:qprim2dash}
Let $G \leqslant {\rm Sym}(\Omega)$ be a finite $2'$-elusive quasiprimitive imprimitive permutation group with point stabiliser $H$. Then the following hold:
\begin{itemize}\addtolength{\itemsep}{0.2\baselineskip}
\item[{\rm (i)}] There is an almost simple group $L$ with socle $\PSL_2(p)$ for some Mersenne prime $p$, and a transitive subgroup $K\leqslant S_k$ for some positive integer $k$ such that
$$\soc(L)^k\normeq G\leqslant L \Wr K$$
and $K$ is the group induced by $G$ on the set of $k$ simple direct factors of $\soc(L)^k$. 
\item[{\rm (ii)}] Moreover, $G$ acts faithfully on a nontrivial system of imprimitivity that can be identified with $\Delta^k$, where $L$ acts transitively on $\Delta$, $\soc(L)$ has point stabiliser $C_p{:}C_{(p-1)/2}$, and
\begin{equation}\label{e:qqq}
(C_p{:}C_r)^k\leqslant (H\cap \soc(L))^k < (C_p{:}C_{(p-1)/2})^k
\end{equation}
where $r$ is the product of the distinct prime divisors of $(p-1)/2$. In particular, $(p-1)/2$ is not square-free.
\end{itemize}
\end{Theorem}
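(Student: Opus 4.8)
The plan is to combine Praeger's structure theorem for quasiprimitive groups \cite{P} with a local analysis inside the socle $N=\soc(G)=T^k$ (with $T$ simple), using throughout one elementary mechanism and one divisibility constraint. The mechanism is that a derangement of odd prime order on any quotient of $\Omega$ lifts to a derangement of the same order on $\Omega$, since an element fixing no block fixes no point; in particular $G$ is $2'$-elusive on every nontrivial block system whose size is divisible by an odd prime. The constraint is that if some odd prime $r$ divides $|G|$ but not $|H|$, then by Cauchy's theorem $G$ has an element of order $r$ meeting no conjugate of $H$, hence a derangement of order $r$ with $r\mid|\Omega|$; so $2'$-elusivity forces every odd prime divisor of $|G|$ to divide $|H|$. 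Running through the quasiprimitive types, those possessing a nontrivial regular normal subgroup $R$ (the affine, holomorph and twisted wreath types) are excluded at once: $R$ consists of derangements, and either $R$ has an element of odd prime order or $R$ is a $2$-group and $|\Omega|=|R|$ is a $2$-power, contrary to $2'$-elusivity. The diagonal types fall to a direct calculation: for $t\in T$ of odd prime order, the element $(t,1,\dots,1)\in N$ has component-order multiset $\{|t|,1,\dots,1\}$, which no conjugate can carry into a full diagonal subgroup, so it is a derangement. This leaves the almost simple and product action types, which I treat uniformly as $\soc(L)^k\normeq G\leqs L\Wr K$, with $L$ almost simple and primitive on a set $\Delta$; quasiprimitivity forces $K$ transitive on the $k$ factors, for otherwise a proper subproduct of $N$ would be a nontrivial intransitive normal subgroup.

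Next I would fix a maximal block system $\mathcal B$. As $G$ is quasiprimitive, the kernel of its action on $\mathcal B$ is an intransitive normal subgroup, hence trivial, so $G$ acts faithfully and primitively on $\mathcal B$; being of almost simple or product action type, $\mathcal B$ can be identified with $\Delta^k$ under the product action of $L\Wr K$. By the lifting mechanism, if $|\Delta|$ is divisible by an odd prime then $G$ is $2'$-elusive on $\mathcal B=\Delta^k$, so Theorem \ref{t:prim2dash} applies and forces $\soc(L)\in\{\mathrm{M}_{11},{}^2F_4(2)\}$ with the stabilisers listed there. But then the divisibility constraint rules out any proper refinement of the point stabiliser: for $\mathrm{M}_{11}$ one would need $3\cdot5\cdot11$ to divide $|H|$, which holds for no proper subgroup of $\PSL_2(11)$, and similarly for ${}^2F_4(2)$. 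Hence no imprimitive refinement is $2'$-elusive, contradicting the hypothesis that $G$ is imprimitive. Therefore $|\Delta|$ must be a power of $2$, and $L$ is a primitive almost simple group of $2$-power degree.

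I would then identify $L$. Invoking the classification of primitive almost simple groups of $2$-power degree, I aim to eliminate every possibility except $\soc(L)=\PSL_2(p)$ with $p+1=2^n$ a Mersenne prime, acting on the projective line $\Delta$ of size $p+1$. For the other candidates (notably $\Alt(2^m)$ and the projective actions of linear groups) the divisibility constraint fails for all proper stabilisers, or an element of odd prime order lying in an irreducible torus acts without fixed points on $\Delta$ and hence, by the lifting mechanism, on $\Omega$. This pruning is the step I expect to be the main obstacle, since it relies on the CFSG-dependent list of such primitive groups together with a uniform fixed-point argument. With $L$ in hand, a short computation in the product action completes the reduction to a single coordinate: any element of $G$ of odd prime order $\ell$ whose image in $K$ is nontrivial is a product of $\ell$-cycles on the factors, and the relation that it has order $\ell$ forces its cyclic $\Delta$-component product to be trivial, which yields a fixed point on $\Delta^k$; thus every odd-prime-order derangement lies in the base group and acts coordinatewise, reducing $2'$-elusivity to a condition on $\soc(L)=\PSL_2(p)$ acting on a refinement of $\Delta$.

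Finally I would carry out the local computation in $T=\PSL_2(p)$. The point stabiliser $M$ of the refined coordinate action lies in the Borel subgroup $B=C_p{:}C_{(p-1)/2}$ stabilising a point of $\Delta$, and the odd-prime-order elements of $T$ are the unipotent elements (of order $p$, all inside the normal $C_p\leqs B$) and the split-torus semisimple elements (of orders dividing $(p-1)/2$); the non-split torus contributes only $2$-power orders precisely because the Mersenne condition gives $(p+1)/2=2^{n-1}$. Demanding that every unipotent element fix a point forces $C_p\leqs M$, so $M=C_p{:}C_s$ with $s\mid(p-1)/2$; demanding that every semisimple element of odd prime order $\ell\mid(p-1)/2$ be conjugate into $M$ forces $\ell\mid s$ for each such $\ell$, that is $r\mid s$, where $r$ is the product of the distinct prime divisors of $(p-1)/2$. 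Imprimitivity forces $M<B$, so $s<(p-1)/2$, and such an $s$ exists with $r\mid s\mid(p-1)/2$ precisely when $(p-1)/2$ is not square-free. Taking the $k$-fold product of these coordinatewise constraints yields the sandwich \eqref{e:qqq} and the conclusion that $(p-1)/2$ is not square-free, completing the proof.
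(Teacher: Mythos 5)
There is a genuine gap, and it sits exactly where you predicted the ``main obstacle'' would be --- but the tools you propose for that step cannot work even in principle. After you reduce to $L$ primitive almost simple of $2$-power degree, the candidates are $\soc(L)=A_{2^m}$ (natural action) and $\PSL_2(p)$ on the projective line with $p$ Mersenne, and you propose to eliminate $A_{2^m}$ either by your divisibility constraint or by exhibiting an odd-prime-order torus element acting fixed-point-freely on $\Delta$. Neither succeeds. Since $|\Delta|$ is a power of $2$, an element of odd prime order $\ell$ acting on $\Delta$ (or on $\mathcal{B}=\Delta^k$) has a number of fixed points congruent to $|\Delta|\not\equiv 0 \pmod{\ell}$, so \emph{no} odd-prime-order derangement on $\Delta$ or on $\mathcal{B}$ exists and your lifting mechanism yields nothing in the $2$-power case. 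And divisibility does not prune $A_{2^m}$ either: $A_{2^m-1}$ itself, and many of its proper subgroups, have order divisible by every odd prime at most $2^m$. What is actually required is conjugacy-class coverage: one must show that no proper subgroup of $A_{2^m}$ meets every class of elements of odd prime order, which is the content of Theorem~\ref{thm:autT} (resting on the Liebeck--Praeger--Saxl tables, Sylvester's theorem on binomial coefficients, and \cite{BGW}). The paper's whole proof is organised around this classification; your proposal defers it without supplying a workable substitute.

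The per-coordinate reduction is also flawed in two ways. First, your argument that an element of odd prime order with nontrivial image in $K$ has trivial cycle products only produces a fixed point on $\mathcal{B}=\Delta^k$, i.e.\ a fixed \emph{block}; such an element can still be fixed-point-free on $\Omega$ inside the blocks it fixes, so this neither constrains $2'$-elusivity on $\Omega$ nor justifies reducing to ``a condition on $\soc(L)$ acting on a refinement of $\Delta$''. Second, your divisibility test applied to $|H|$ fails to rule out refinements once $k\geqs 2$: a diagonal $\PSL_2(11)\leqs \PSL_2(11)^k$, for instance, has order divisible by $3\cdot 5\cdot 11$, so ``$3\cdot5\cdot11$ divides $|H|$'' does not force $H\cap\soc(L)^k=\PSL_2(11)^k$. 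The mechanism the paper uses instead is cleaner and is what your sketch is missing: quasiprimitivity makes $N=T^k$ transitive, hence $N$ itself is $2'$-elusive; since the $N$-class of $(t,1,\ldots,1)$ lies inside a single simple factor, $N_\alpha\cap T_i$ must be nontrivial and meet every $T_i$-class of odd-prime-order elements, whence $C_p{:}C_r\leqs N_\alpha\cap T_i\leqs \pi_i(N_\alpha)\leqs C_p{:}C_{(p-1)/2}$ for every $i$, giving \eqref{e:qqq} without assuming $N_\alpha$ is a direct product or that $\Omega$ carries a product action --- assumptions your final paragraph makes tacitly and which are false in general (only a block system, built in the paper from $J=(C_p{:}C_{(p-1)/2})^k = N_N({\rm O}_p(N_\alpha))$ and identified with $\Delta^k$ via Lemma~\ref{lemPA}, is a Cartesian power). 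A minor further slip: transitivity of $K$ does not follow from quasiprimitivity as you claim (a subproduct over a $K$-orbit would be a \emph{transitive} normal subgroup, which quasiprimitivity permits); it follows from the minimality of $N$, after one first proves $N$ is the unique minimal normal subgroup so that $C_G(N)=1$ --- a step your proposal omits.
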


\begin{Remark}\label{r:qprim}
Some comments on the statement of Theorem \ref{t:qprim2dash}.
\begin{itemize}\addtolength{\itemsep}{0.2\baselineskip}
\item[{\rm (a)}] Notice that the final inclusion in \eqref{e:qqq} is strict because $2'$-elusivity requires $|\Omega|$ to be divisible by an odd prime. In particular, $r<(p-1)/2$ and thus $(p-1)/2$ is not square-free.
\item[{\rm (b)}] We refer the reader to Remark \ref{r:qpex}, which shows that there are genuine examples satisfying the conditions in Theorem \ref{t:qprim2dash}.
\end{itemize}
\end{Remark}

In order to state our final result, recall that a transitive group $G \leqs {\rm Sym}(\Omega)$ is biquasiprimitive if every nontrivial normal subgroup of $G$ has at most two orbits on $\Omega$, and there is a normal subgroup with two orbits, say $\Delta_1$ and $\Delta_2$. Let $G^+$ denote the index-two subgroup of $G$ that fixes $\Delta_1$ and $\Delta_2$ setwise. 

\begin{Theorem}\label{t:bqprim2dash}
Let $G \leqslant {\rm Sym}(\Omega)$ be a finite $2'$-elusive biquasiprimitive permutation group with point stabiliser $H=G_{\a}$ and minimal normal subgroup $N$. Let $K \leqs S_k$ be the transitive group induced by $G$ on the set of $k$ simple direct factors of $N$ and let $K^+ \leqs K$ be the group induced by $G^+$. 

\vspace{1mm}

\begin{itemize}\addtolength{\itemsep}{0.2\baselineskip}
\item[{\rm (a)}] Then $G^+=NH$ and $N$ is the unique minimal normal subgroup of $G$.

\vspace{1mm}

\item[{\rm (b)}] If $G^{+}$ acts faithfully on its two orbits, then one of the following holds:

\vspace{1mm}

\begin{itemize}\addtolength{\itemsep}{0.2\baselineskip}
\item[{\rm (i)}] $(G,H) = ({\rm M}_{10},A_5)$ or $({\rm Aut}(A_6),S_5)$;
\item[{\rm (ii)}]  $G={\rm M}_{11}\Wr K$, $H=\PSL_2(11)\Wr K^+$ and $|K:K^{+}|=2$;
\item[{\rm (iii)}] $N=({}^2F_4(2)')^k\normeq G\leqslant {}^2F_4(2)\Wr K$ and $H=N_{G^+}(N_{\a})$, where $N_{\a}=\PSL_2(25)^k$;
\item[{\rm (iv)}] $N=\PSL_2(p)^k\normeq G\leqslant\PGL_2(p)\Wr K$,  
$$(C_p{:}C_r)^k\leqslant N_{\alpha} <(C_p{:}C_{(p-1)/2})^k$$
and
$$H < N_{G^+}(N_\alpha) = G^+\cap ((C_p{:}C_{p-1}) \Wr K),$$
where $p$ is a Mersenne prime and $r$ is the product of the distinct prime divisors of $(p-1)/2$. 
\end{itemize}
Moreover, each group $G$ in (i), (ii) and (iii) is $2'$-elusive and biquasiprimitive.

\vspace{1mm}

\item[{\rm (c)}] If $G^{+}$ is not faithful on at least one orbit, then $k$ is even, $K^+$ is intransitive, $|K:K^+|=2$ and one of the following holds:
\begin{itemize}\addtolength{\itemsep}{0.2\baselineskip}
\item[{\rm (i)}] $G={\rm M}_{11}\Wr K$ and $H=(\PSL_2(11)^{k/2} \times {\rm M}_{11}^{k/2}){:}K^+$;
\item[{\rm (ii)}] $N = ({}^2F_4(2)')^k\normeq G\leqslant {}^2F_4(2)\Wr K$, $N_{\a} = {\rm PSL}_{2}(25)^{k/2} \times ({}^2F_4(2)')^{k/2}$ and 
$$H= N_{G^+}(N_{\a})=G^+\cap (({\rm PSL}_{2}(25).2_3)^{k/2}\times {}^2F_4(2)^{k/2}){:}K^+$$
with $G^+  = G \cap ({}^2F_4(2)\Wr K^+)$; 
\item[{\rm (iii)}] $N = \PSL_{2}(p)^k \normeq G \leqslant \PGL_2(p)\Wr K$,
$$(C_p{:}C_r)^{k/2} \times \PSL_2(p)^{k/2} \leqslant N_{\alpha} <(C_p{:}C_{(p-1)/2})^{k/2}\times \PSL_2(p)^{k/2}$$
and
$$H < N_{G^+}(N_\alpha)=G^+\cap ((C_p{:}C_{p-1})^{k/2}\times \PGL_2(p)^{k/2}){:}K^+,$$
where $G^+ = G \cap (\PGL_2(p)\Wr K^+)$ and $p$ is a Mersenne prime.
\end{itemize}
Moreover, each group $G$ in (i) and (ii) is $2'$-elusive and biquasiprimitive.
\end{itemize}
\end{Theorem}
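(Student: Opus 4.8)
The plan is to reduce the classification to the quasiprimitive results of Theorems~\ref{t:prim2dash} and~\ref{t:qprim2dash} by analysing the action of $G^+$ on one of its two orbits $\Delta_1,\Delta_2$, and then to control how the outer part of $G$ fuses conjugacy classes of odd prime order. I would first settle part~(a). Every element of odd order lies in $G^+$, and since $G^+$ fixes $\Delta_1$ and $\Delta_2$ setwise, an element of odd prime order is a derangement on $\Omega$ if and only if it is a derangement on each of $\Delta_1$ and $\Delta_2$. By $2'$-elusivity the socle $N=T^k$ is nonabelian with $T$ simple (an abelian socle would act regularly on each orbit and hence contain odd-prime-order derangements), so $N$ is perfect and cannot surject onto $G/G^+\cong C_2$; thus $N\leqs G^+$ and $N$ is intransitive with orbits exactly $\Delta_1,\Delta_2$, and the index count $|NH:H|=|N:N_\alpha|=|\Delta_1|=|G^+:H|$ gives $G^+=NH$. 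For uniqueness, a second minimal normal subgroup $N'$ would likewise be intransitive with orbits $\Delta_1,\Delta_2$, and since $N'$ centralises the transitive group $N$ on each orbit it would act regularly there; then any odd-prime-order element of $N'$ deranges both orbits, contradicting $2'$-elusivity.

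Fixing $g\in G\setminus G^+$ interchanging the two orbits, conjugation by $g$ intertwines the two $G^+$-actions, so an odd-prime-order $x$ deranges $\Delta_2$ exactly when $x^{g}$ deranges $\Delta_1$; thus $G$ is $2'$-elusive if and only if every odd-prime-order $x\in G^+$ has $x$ or $x^{g}$ fixing a point of $\Delta_1$. Invoking Praeger's structure theory for biquasiprimitive groups~\cite{Praeger}, I would split according to whether $G^+$ is faithful on $\Delta_1$ (case~(b)) or not (case~(c)). In case~(c), writing $M_1$ for the nontrivial kernel, one has $C_G(N)=1$ (otherwise $C_G(N)$ would contain a minimal normal subgroup distinct from $N$), so $N\cap M_1\neq1$; minimality and uniqueness of $N$ then force $N=(N\cap M_1)\times(N\cap M_1^{g})$ with the two $g$-conjugate halves isomorphic to $T^{k/2}$. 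This makes $k$ even, $K^+$ intransitive with two blocks of size $k/2$ and $|K:K^+|=2$, and shows that on $\Delta_1$ only the active half $N\cap M_1^{g}\cong T^{k/2}$ acts faithfully. In case~(b) the analogous configurations with a regular half are excluded by the same argument as in part~(a), so $G^+$ is faithful and quasiprimitive on $\Delta_1$.

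In either case I can reduce to a condition on the single simple factor by testing the odd-prime-order elements supported on one factor: such an element deranges $\Omega$ exactly when its component deranges the relevant $T$-action on both orbits. In case~(c) both orbits present the same point action $\Gamma=T/T_\gamma$, so $2'$-elusivity forces $T$ acting on $\Gamma$ to have no odd-prime-order derangement, that is, to be $2'$-elusive; by Theorems~\ref{t:prim2dash} and~\ref{t:qprim2dash} this gives $T\in\{\mathrm{M}_{11},\,{}^2F_4(2),\,\PSL_2(p)\}$ with $p$ a Mersenne prime, together with the associated point stabilisers, yielding cases~(c)(i)--(iii). The same mechanism, now with $G$ biquasiprimitive via $K^+<K$ on the top rather than via distinct stabilisers, produces cases~(b)(ii)--(iv). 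Finally I would verify that the $\mathrm{M}_{11}$ and ${}^2F_4(2)$ families listed are genuinely $2'$-elusive and biquasiprimitive, using the product-action fixed-point analysis (as in the proof of Theorem~\ref{t:prim2dash}) to check that every odd-prime-order element fixes a point on $\Delta_1$ or on $\Delta_2$; the constraints on $N_\alpha$ in the $\PSL_2(p)$ cases are read off from~\eqref{e:qqq} and its analogue.

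The main obstacle is the genuinely new phenomenon in the faithful case~(b): here the two orbits may present two non-conjugate point actions $\Gamma=T/T_\gamma$ and $\Gamma'=T/T_\gamma'$, interchanged by $g$, and $2'$-elusivity no longer forces $T$ to be $2'$-elusive but only that every odd-prime-order element of $T$ is conjugate into $T_\gamma$ or into $T_\gamma'$. Theorem~\ref{t:qprim2dash} therefore does not apply directly, and one must analyse the fusion of odd-prime-order classes under the outer element $g$ across all almost simple $T$ admitting such a pair of maximal subgroups. I expect this to single out $T=A_6$ acting on its two non-conjugate classes of $A_5$ subgroups --- swapped by the outer automorphism of type $2_3$, which also fuses the two classes of elements of order $3$ --- giving the pairs $(\mathrm{M}_{10},A_5)$ and $(\Aut(A_6),S_5)$ of case~(b)(i). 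Carrying this out rigorously requires the almost simple quasiprimitive classification together with precise information on outer automorphisms and conjugacy classes of the relevant simple groups.
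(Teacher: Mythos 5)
Your skeleton (establish $G^+=NH$, split by whether $G^+$ is faithful on its orbits, use an element $g\in G\setminus G^+$ to intertwine the two orbit actions) matches the paper's Lemma \ref{lem:inGplus} and Sections \ref{ss:faith}--\ref{ss:notfaith}, but there are two genuine gaps. The first is in case (b): your claim that $G^+$ is ``faithful and quasiprimitive on $\Delta_1$'', on which your reduction to Theorems \ref{t:prim2dash} and \ref{t:qprim2dash} rests, is false. Take $G={\rm M}_{11}\Wr S_2$ with $H=\PSL_2(11)^2$, an instance of (b)(ii) with $K^+=1$: the subgroup ${\rm M}_{11}\times 1$ is normal in $G^+={\rm M}_{11}^2$ but intransitive on $\Delta_1\cong \Delta^2$, so $G^+$ is not quasiprimitive on its orbits, and in general $K^+$ can have two orbits on the simple factors in case (b). The paper never uses quasiprimitivity of $G^+$; instead it observes that every odd-prime-order element of $N$ is $G$-conjugate into $N_\alpha$, so $N_\alpha$ meets every $\Aut(N)$-class of such elements, and then extracts factorwise information by conjugating $(t,\ldots,t)$, $(t,1,\ldots,1)$ and --- when $K^+$ is intransitive --- $(t,1,\ldots,1,t)$ into $N_\alpha$ (adjusting by elements of $R_1\times\cdots\times R_{k/2}$). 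This feeds into Theorem \ref{thm:autT}(i) and Table \ref{tab:alloddautT}, after which $\Omega_8^+(2)$, $\POmega_8^+(3)$ and $A_6$ with $k\geqs 2$ are eliminated by exhibiting explicit derangements of order $5$ (resp.\ $3$) as in the proof of \cite[Proposition 4.6]{GX}. Note that the $\Aut$-class formulation is precisely what disposes of the ``non-conjugate point actions swapped by $g$'' phenomenon you flag, including the $A_6$ case producing $({\rm M}_{10},A_5)$ and $(\Aut(A_6),S_5)$; since you explicitly defer this analysis (``I expect this to single out $T=A_6$\dots''), the core of case (b) is missing from your proposal.

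The second gap is the uniqueness of $N$ in the unfaithful case. Your argument --- a second minimal normal subgroup $N'$ centralises $N$, hence acts regularly on each orbit, hence its odd-prime-order elements are derangements --- needs $N'$ to be faithful on each orbit and to contain elements of odd prime order, and neither is automatic in case (c): a putative second minimal normal subgroup could have a kernel on an orbit (its odd-order elements then fix that orbit pointwise, so are not derangements), or could be an elementary abelian $2$-group with no odd-prime-order elements at all. (The latter possibility also infects your part (a) claim that an abelian socle ``contains odd-prime-order derangements''; the paper instead reaches a contradiction with $|\Omega|$ being divisible by an odd prime, via $|\Omega|\in\{2,4\}$ in Lemma \ref{lem:inGplus} or a $2$-power degree.) The paper's Lemma \ref{lem:NG+unfaith} does real work here: $\soc(G)\leqslant \soc(L)\times\soc(L)$ by \cite[Lemma 3.2(a)]{Praeger}; for $T={\rm M}_{11}$ or ${}^2F_4(2)'$ the stabiliser $(M_1)_\alpha$ is self-normalising, so $C_{\Sym(\Delta_2)}(M_1)=1$; and for $T=\PSL_2(p)$ one computes that $N_{M_1}((M_1)_\alpha)$ has odd order, so $C_L(M_1)$ is an odd-order semiregular group, any minimal normal subgroup of $L$ inside it is elementary abelian of odd order and intransitive (as $p+1$ divides $|\Delta_2|$ and is a $2$-power), whence \cite[Lemma 3.5 and Theorem 1.1(b)]{Praeger} force $\soc(G)=M_1\times M_1^g$. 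None of this appears in your outline. A smaller inaccuracy: the classification of the $2'$-elusive action of $M_1\cong T^{k/2}$ on $\Delta_2$ is Lemma \ref{l:socle} (which handles point stabilisers subdirect in the product, via Theorem \ref{thm:autT}(ii)), not Theorems \ref{t:prim2dash} and \ref{t:qprim2dash}, which presuppose a quasiprimitive group.
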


We refer the reader to Remarks \ref{r:exd} and \ref{r:bqex} for further comments on the examples arising in parts (b)(iv) and (c)(iii) of Theorem \ref{t:bqprim2dash}, respectively.

\begin{Corollary}\label{cor:largeq}
Let $G \leqs {\rm Sym}(\Omega)$ be a finite quasiprimitive or biquasiprimitive permutation group such that $|\Omega|$ is divisible by a prime $q \geqs 5$. Then either $G$ contains a derangement of odd prime order, or ${\rm PSL}_{2}(p)^k$ is the unique minimal normal subgroup  of $G$, where $k \geqs 1$ and $p$ is a Mersenne prime such that $q^2$ divides $(p-1)/2$.
\end{Corollary}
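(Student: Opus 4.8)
The plan is to read the corollary straight off Theorems~\ref{t:prim2dash}, \ref{t:qprim2dash} and \ref{t:bqprim2dash}. Suppose $G$ contains no derangement of odd prime order. Since $q \geqs 5$ is an odd prime dividing $|\Omega|$, the group $G$ is $2'$-elusive by definition, so precisely one of the following holds: $G$ is primitive, $G$ is quasiprimitive and imprimitive, or $G$ is biquasiprimitive (these are mutually exclusive since biquasiprimitivity requires a normal subgroup with exactly two orbits). I would treat the three possibilities in turn, using the structure theorems to pin down the socle and then a short divisibility argument to force $q^2 \mid (p-1)/2$.

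First I would discard every configuration in which $3$ is the only odd prime dividing $|\Omega|$. If $G$ is primitive, then Theorem~\ref{t:prim2dash} together with Remark~\ref{r:prim}(a) shows that $3$ is the unique odd prime divisor of $|\Omega|$, contradicting $q \geqs 5$; hence $G$ is not primitive. The same observation eliminates parts (b)(i)--(iii) and (c)(i)--(ii) of Theorem~\ref{t:bqprim2dash}, since each of these involves only the sections ${\rm M}_{10}$, ${\rm Aut}(A_6)$, ${\rm M}_{11}$ or ${}^2F_4(2)$, whose relevant indices are divisible by no odd prime other than $3$ (for the last two this is Remark~\ref{r:prim}(a) applied to $L={\rm M}_{11}$ and $L={}^2F_4(2)$, and for the first two the degree is $12$). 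Thus the only surviving possibilities are the quasiprimitive imprimitive case of Theorem~\ref{t:qprim2dash} and parts (b)(iv) and (c)(iii) of Theorem~\ref{t:bqprim2dash}. In all of these $N=\PSL_2(p)^k$ is a minimal normal subgroup with $p$ a Mersenne prime, and it is the \emph{unique} such subgroup: this is stated explicitly in Theorem~\ref{t:bqprim2dash}(a), while in the quasiprimitive case the product-action structure $\soc(L)^k\normeq G\leqs L\Wr K$ forces $G$ to be of product-action type (the socle is nonregular, ruling out the holomorph types), whose socle is the unique minimal normal subgroup.

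It remains to prove $q^2\mid (p-1)/2$. In each surviving case the stabiliser $N_\a$ in $N$ of a point $\a$ is either $D^k$ or $D^{k/2}\times\PSL_2(p)^{k/2}$, where $D=C_p{:}C_s$ with $r\mid s\mid (p-1)/2$ and $s<(p-1)/2$, and $r$ is the product of the distinct prime divisors of $(p-1)/2$. Since $N$ is transitive on $\Omega$ in the quasiprimitive case and has two orbits of equal size $|\Omega|/2$ in the biquasiprimitive case, $|\Omega|$ is $|N:N_\a|$ or $2|N:N_\a|$, and a direct computation from $|\PSL_2(p)|=p(p^2-1)/2$ and $|D|=ps$ gives $|N:N_\a|=\big(2^m\cdot\tfrac{(p-1)/2}{s}\big)^{j}$ for the appropriate exponent $j\in\{k,k/2\}$ (in case (c)(iii) the extra factor $\PSL_2(p)^{k/2}$ cancels entirely). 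The crucial point is that $p$ is Mersenne, so $p+1=2^m$ and $(p-1)/2=2^{m-1}-1$ is odd; hence the odd part of $(p^2-1)/(2s)=2^m\cdot\tfrac{(p-1)/2}{s}$ is exactly $\tfrac{(p-1)/2}{s}$, and the odd primes dividing $|\Omega|$ are precisely those dividing $\tfrac{(p-1)/2}{s}$ (in particular $p\nmid|\Omega|$).

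Finally I would run the divisibility argument, which is uniform across the cases. As $q\geqs 5$ divides $|\Omega|$, the previous step gives $q\mid \tfrac{(p-1)/2}{s}$, hence $q\mid (p-1)/2$. Since $q$ is a prime divisor of $(p-1)/2$ it divides its radical $r$, and $r\mid s$, so $q\mid s$; combining $q\mid s$ with $q\mid \tfrac{(p-1)/2}{s}$ yields $q^2\mid (p-1)/2$, as required. I expect the only genuine care to lie in the bookkeeping across the several biquasiprimitive cases---in particular reading off the mixed socle stabiliser $D^{k/2}\times\PSL_2(p)^{k/2}$ in the non-faithful case (c)(iii) and checking that the factor of $2$ and the $\PSL_2(p)^{k/2}$ contribute no odd primes---but every case collapses to the same odd part $\tfrac{(p-1)/2}{s}$, after which the final two-line divisibility argument closes the proof.
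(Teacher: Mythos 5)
Your overall route is the intended one: the paper gives no separate proof of this corollary, which is meant to be read off from Theorems \ref{t:prim2dash}, \ref{t:qprim2dash} and \ref{t:bqprim2dash} exactly as you do. Your elimination of the ${\rm M}_{11}$, ${}^2F_4(2)$ and $A_6$ configurations (where $3$ is the only odd prime dividing $|\Omega|$), and your sourcing of the uniqueness of the minimal normal subgroup (Theorem \ref{t:bqprim2dash}(a) in the biquasiprimitive case; the first paragraph of the proof of Theorems \ref{t:prim2dash} and \ref{t:qprim2dash} in the quasiprimitive case), are all correct.

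There is, however, one genuine inaccuracy in the middle step: you assert that $N_\alpha$ is $D^k$ or $D^{k/2}\times \PSL_2(p)^{k/2}$ with $D=C_p{:}C_s$ for a single $s$. The theorems only \emph{sandwich} $N_\alpha$ between $(C_p{:}C_r)^k$ and $(C_p{:}C_{(p-1)/2})^k$ (respectively the mixed versions), and the proof of Theorem \ref{t:qprim2dash} says explicitly that $N_\alpha$ is a \emph{subdirect} product of $R=R_1\times\cdots\times R_k$; it need not be a direct power, so your formula $|N{:}N_\alpha|=\bigl(2^m\cdot\tfrac{(p-1)/2}{s}\bigr)^{j}$ is unjustified as written. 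The repair is routine and preserves your valuation argument. From the sandwich, $|N_\alpha|=p^k t$ with $r^k\mid t\mid ((p-1)/2)^k$ (in case (c)(iii), $N_\alpha=A\times \PSL_2(p)^{k/2}$ because $N_\alpha$ contains the full simple factors, and the same holds with $k/2$ in place of $k$); since $p+1=2^m$ and $(p-1)/2$ is odd, the odd part of $|\Omega|\in\{|N{:}N_\alpha|,\,2|N{:}N_\alpha|\}$ equals $((p-1)/2)^k/t$. Then $q\mid ((p-1)/2)^k/t$ forces $q\mid (p-1)/2$, hence $q\mid r$ and so $q^k\mid r^k\mid t$; if $q^2\nmid (p-1)/2$ then the $q$-adic valuation of $((p-1)/2)^k/t$ is $k-v_q(t)\leqs 0$, contradicting $q\mid ((p-1)/2)^k/t$. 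Thus $q^2\mid (p-1)/2$ — exactly your two-line divisibility count, but now valid for an arbitrary $N_\alpha$ in the sandwich rather than only for direct powers $(C_p{:}C_s)^k$.
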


\begin{Remark} 
Referring to Corollary \ref{cor:largeq}, it is worth noting that $2^{61}-1$ is the smallest Mersenne prime $p$ with the property that $(p-1)/2$ is divisible by $q^2$ for a prime $q \geqs 5$.
\end{Remark}

Recall that the Polycirculant Conjecture asserts that every finite vertex-transitive graph has a semiregular automorphism. The existence of such an automorphism has numerous applications. For instance, they have been used to construct Hamiltonian paths and cycles \cite{alspach}, to provide succinct representations of graphs \cite{biggs}, and to enumerate all vertex-transitive graphs of small orders \cite{mcr}. In many of these applications, it is desirable to work with a semiregular automorphism of order as large as possible. For example, the Polycirculant Conjecture is established for all vertex-transitive cubic graphs in \cite{MaruScap}, and later work of Cameron et al. \cite{CSS} shows that any such graph admits a semiregular automorphism of order greater than two. In fact, the main theorem of \cite{MSV} reveals that there is a function 
$f:\mathbb{N} \to \mathbb{N}$, satisfying $f(n)\rightarrow \infty$ as $n\rightarrow\infty$, such that any vertex-transitive cubic  graph on $n$ vertices contains a semiregular subgroup of order at least $f(n)$. 

As noted above, Giudici and Xu use their work on elusive groups in \cite{GX} to verify the Polycirculant Conjecture for all finite arc-transitive graphs of prime valency $p$.  We anticipate that our results on $2'$-elusive groups in this paper will play a key role in establishing the conjecture for arc-transitive graphs of valency $2p$. A key tool in order to achieve this goal is Theorem \ref{t:graph} below, which may be of independent interest (the application to graphs of valency $2p$ will be the subject of a future paper). This approach is similar to the aforementioned extension of the main theorem of \cite{MaruScap} in \cite{CSS}.

In the statement of the theorem, $V\Gamma$ denotes the set of vertices of $\Gamma$, and $K_{12}$ is the complete graph on $12$ vertices. In addition, the \emph{standard double cover}  of $\Gamma$ is the graph with vertex set $V\Gamma\times \{0,1\}$, such that $\{(u,a),(v,b)\}$ is an edge if and only if $a \ne b$ and $\{u,v\}$ is an edge of $\Gamma$. This graph is also known as the direct product of $\Gamma$ with $K_2$.

\begin{Theorem}\label{t:graph}
Let $\Gamma$ be a finite connected graph of prime valency $p$ and let $G\leqslant \Aut(\Gamma)$ be an arc-transitive group of automorphisms so that the action of $G$ on $V\Gamma$ is either quasiprimitive or biquasiprimitive. Then one of the following holds:
\begin{itemize}\addtolength{\itemsep}{0.2\baselineskip}
\item[{\rm (i)}] $G$ contains a derangement of odd prime order;
 \item[{\rm (ii)}] $|V\Gamma|$ is a power of $2$;
 \item[{\rm (iii)}] $\Gamma=K_{12}$, $G={\rm M}_{11}$ and $p=11$;
 \item[{\rm (iv)}] $|V\Gamma|=(p^2-1)/2s$ and $G=\PSL_2(p)$ or $\PGL_2(p)$, where $p$ is a Mersenne prime and $C_r \leqs C_s < C_{(p-1)/2}$, where $r$ is the product of the distinct prime divisors of $(p-1)/2$;
 \item[{\rm (v)}] $|V\Gamma|=(p^2-1)/s$ and $G=\PGL_2(p)$, where $p$ and $s$ are as in part (iv), and $\Gamma$ is the standard double cover of the graph given in (iv).
\end{itemize}
\end{Theorem}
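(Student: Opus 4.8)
The plan is to assume that conclusions (i) and (ii) fail and to deduce one of (iii)--(v). If $G$ contains no derangement of odd prime order and $|V\Gamma|$ is not a power of $2$, then $|V\Gamma|$ is divisible by an odd prime, so $G$ acting on $V\Gamma$ is $2'$-elusive. By hypothesis $G$ is quasiprimitive or biquasiprimitive, so exactly one of three situations occurs: $G$ is primitive (Theorem~\ref{t:prim2dash}), $G$ is quasiprimitive and imprimitive (Theorem~\ref{t:qprim2dash}), or $G$ is biquasiprimitive (Theorem~\ref{t:bqprim2dash}). In each case I would feed the group structure back into the graph, the essential extra ingredient being that $p$ is prime: a transitive group of prime degree is primitive, so $G_v^{\Gamma(v)}$ is primitive, $\Gamma$ is $G$-locally-primitive, and the valency $p$ is a \emph{prime} subdegree of $G$ on $V\Gamma$ coming from a self-paired suborbit. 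This prime-subdegree condition is the tool I would use to eliminate or pin down each family.

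First consider the families whose socle is a power of ${\rm M}_{11}$ or ${}^2F_4(2)'$. For $k=1$ I would read the subdegrees off directly: for ${\rm M}_{11}$ on $12$ points the only nontrivial orbital graph is the complete graph $K_{12}$, of prime valency $11$, giving conclusion (iii); and for ${}^2F_4(2)$ on the cosets of $\PSL_2(25).2_3$ a check shows no subdegree is prime, so this case is excluded. For $k\geqslant 2$ the point stabiliser in the product action on $\Delta^k$ contains $(L_\delta)^k$, and a vertex differing from the base vertex in exactly $m$ coordinates lies in a $G_v$-suborbit whose length is divisible both by $k$ and by a nontrivial subdegree of $L$ when $m=1$, and by the square of the least nontrivial subdegree of $L$ when $m\geqslant 2$; in either case the length is composite, so no arc-transitive graph of prime valency arises. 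The same test disposes of the biquasiprimitive cases $({\rm M}_{10},A_5)$ and $({\rm Aut}(A_6),S_5)$ of Theorem~\ref{t:bqprim2dash}(b)(i) together with the biquasiprimitive ${\rm M}_{11}$- and ${}^2F_4(2)$-families, via their subdegree lists.

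It remains to treat the families with socle $\PSL_2(p)^k$ for $p$ a Mersenne prime. For $k\geqslant 2$, passing to the quotient graph on the block system $\Delta^k$ (whose valency must be $1$ or $p$, and cannot be $1$ since $|\Delta^k|=(p+1)^k>2$) reduces to the product-action bound above and forces $k=1$. For $k=1$ and $G$ quasiprimitive, the vertex stabiliser meets the socle in $C_p{:}C_s$ with $C_r\leqslant C_s<C_{(p-1)/2}$, whence $|V\Gamma|=|\soc(L)|/|C_p{:}C_s|=(p^2-1)/2s$. Computing the subdegrees of $\PSL_2(p)$ on the cosets of $C_p{:}C_s$, every suborbit meeting a fibre over an affine point of the projective line has length divisible by $p$ (the normal subgroup $C_p$ permutes those $p$ fibres regularly), while the points of the fibre over $\infty$ are fixed; hence the only prime subdegree is $p$, so the valency equals $p$ and $G=\PSL_2(p)$ or $\PGL_2(p)$, which is conclusion (iv). Finally, if $G$ is biquasiprimitive then, as $G^+$ has two orbits and $G$ is arc-transitive, $\Gamma$ must be bipartite with these two orbits as its parts; since $\PSL_2(p)$ is itself quasiprimitive, biquasiprimitivity forces $G=\PGL_2(p)$ with $G^+=\PSL_2(p)$ acting on each part exactly as in (iv). Reconstructing the $G^+$-invariant bipartite adjacency then identifies $\Gamma$ with the standard double cover of the graph in (iv), giving $|V\Gamma|=(p^2-1)/s$ and conclusion (v).

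I expect the main obstacle to be the suborbit-length bookkeeping that converts ``prime valency'' into the stated conclusions: proving that no subdegree is prime when $k\geqslant 2$, computing the precise subdegrees of $\PSL_2(p)$ on the cosets of $C_p{:}C_s$ to show the valency must equal $p$, and verifying by explicit computation the exclusions in the ${}^2F_4(2)$ and small biquasiprimitive cases. A secondary, more structural difficulty is the final step in the biquasiprimitive $\PSL_2(p)$ case: one must show that the bipartite graph produced by Theorem~\ref{t:bqprim2dash} is genuinely the standard double cover of the quasiprimitive graph in (iv), rather than merely sharing its parameters.
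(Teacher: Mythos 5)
Your overall strategy is the same as the paper's (assume (i) and (ii) fail, so $G$ is $2'$-elusive on $V\Gamma$; invoke Theorems \ref{t:prim2dash}, \ref{t:qprim2dash} and \ref{t:bqprim2dash}; then exploit the valency as a self-paired suborbit of prime length with connected orbital graph), but two of your elimination steps have genuine gaps. First, ``via their subdegree lists'' does not dispose of the small biquasiprimitive cases: $({\rm M}_{10},A_5)$ and $(\Aut(A_6),S_5)$ have subdegrees $\{1,5,6\}$ (Lemma \ref{lem:smallsubdegs}), and $5$ \emph{is} prime; these are excluded only because the suborbit of length $5$ lies inside a single $G^+$-orbit, so its orbital graph is a disjoint union of two copies of $K_6$, hence disconnected. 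Second, your product-action bound (``length divisible by $k$'') presumes the group induced on the $k$ coordinates is transitive. In the biquasiprimitive analysis one must first pass from $\Gamma$ to a connected prime-valency orbital graph of $G^+$ on $\Delta_1$ (this transfer is the content of Lemma \ref{lem:sdc}), and there the induced group $K^+$ may have two equal orbits; when $k=2$ and $K^+=1$, prime subdegrees genuinely occur ($11$ for ${\rm M}_{11}\Wr S_2$ and $p$ for $\PSL_2(p)\Wr S_2$, arising from points differing in one coordinate). These survive every subdegree test and are killed only by disconnectedness: any $g=(g_1,g_2)$ interchanging the two points fixes $\delta$ in the second coordinate, so $\la G_\alpha,g\ra$ is proper and Lemma \ref{l:con} applies (this is the ``Moreover'' clause of Lemma \ref{lem:prodactionsubdegs}). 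Relatedly, you leave the unfaithful cases of Theorem \ref{t:bqprim2dash}(c) untreated; there the kernel of $G^+$ on $\Delta_1$ is transitive on $\Delta_2$, so every connected orbital graph is complete bipartite of composite valency $|\Delta_1|$.

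The other gap is your quotient-graph step for socle $\PSL_2(p)^k$, $k\geqs 2$: the claim that the quotient of $\Gamma$ over the block system $\Delta^k$ has valency $1$ or $p$ is unjustified. Local primitivity only shows the partition induced on $\Gamma(v)$ is trivial; it does not control the number of blocks adjacent to the block $A$ containing $v$, since distinct vertices of $A$ may send their neighbourhoods into distinct blocks (the adjacent-block set is an orbit of the setwise stabiliser $G_A$, not of $G_v$). The paper avoids quotient graphs entirely: by Lemma \ref{lem:imprimsubdeg}(i), applied on the original vertex set, $|B^{G_\alpha}|$ divides the prime $|\omega^{G_\alpha}|$; since $|B^{{\rm O}_p(N_\alpha)}|=p^{\ell}$ with $\ell\geqs 1$, one gets $B^{G_\alpha}=B^{{\rm O}_p(N_\alpha)}$ of size $p$, so every block in this orbit differs from $A$ in one fixed coordinate, contradicting the transitivity of $G_\alpha$ on the $k$ simple factors (Lemma \ref{l:qgraph}). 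You should replace your quotient step by this divisibility argument. Finally, the double-cover identification that you flag as a remaining difficulty is exactly what Lemma \ref{lem:sdc} supplies, and it is not mere parameter-matching: one must first show the two actions of $G^+=\PSL_2(p)$ on $\Delta_1$ and $\Delta_2$ are permutation isomorphic, which holds because $N_G(G_\alpha)=C_p{:}C_{p-1}$ is not contained in $G^+$, yielding $g\in N_G(G_\alpha)\setminus G^+$ with $G_\alpha=G_{\alpha^g}$; only then does the edge-by-edge reconstruction identify $\Gamma$ with the standard double cover of the graph in (iv).
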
 

As we will explain in Section \ref{s:graphs}, if $G \leqs {\rm Sym}(\Omega)$ is a finite transitive permutation group then there is a one-to-one correspondence between the set of suborbits of $G$ and the set of finite digraphs with vertex set $\Omega$ on which $G$ acts arc-transitively. Moreover, the connected graphs of valency $p$ correspond to self-paired suborbits $\omega^{G_{\a}}$ of length $p$ with the property that $G = \la G_{\a},g\ra$ for each $g \in G$ that interchanges $\a$ and $\omega$. Therefore, one of the main steps in the proof of Theorem \ref{t:graph} is to determine the $2'$-elusive quasiprimitive and biquasiprimitive groups with a prime subdegree; we can do this by applying Theorems \ref{t:prim2dash},  \ref{t:qprim2dash} and \ref{t:bqprim2dash}. In the cases that arise, we then need to check that $G$ has a suborbit with the appropriate properties.

Finally, we record a couple of corollaries to Theorem \ref{t:graph} (the short proofs are presented at the end of Section \ref{s:graphs}). 

\begin{Corollary}\label{c:prime}
Let $\Gamma$ be a finite connected graph of prime valency and let $G \leqslant \Aut(\Gamma)$ be an elusive arc-transitive group of automorphisms. Then $\Gamma=K_{12}$ and $G={\rm M}_{11}$.
\end{Corollary}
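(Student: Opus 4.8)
The plan is to derive Corollary \ref{c:prime} as an immediate consequence of Theorem \ref{t:graph} by imposing the additional hypothesis that $G$ is elusive. First I would recall that elusivity is strictly stronger than $2'$-elusivity: if $G$ is elusive then $G$ contains no derangement of any prime order, and in particular no derangement of odd prime order. Since $\Gamma$ has prime valency and $G$ is arc-transitive, we may run through the five alternatives in Theorem \ref{t:graph} and eliminate all but case (iii).

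The key step is to rule out cases (i), (ii), (iv) and (v) using the elusivity hypothesis. Case (i) is immediate: it asserts the existence of a derangement of odd prime order, which directly contradicts elusivity. For cases (iv) and (v), the relevant groups are $G = \PSL_2(p)$ or $\PGL_2(p)$ with $p$ a Mersenne prime; here I would argue that such groups are \emph{not} elusive in the relevant action, by exhibiting a derangement of prime order. The natural candidate is an element of order $2$ (or of the characteristic $p$, or of a suitable prime divisor of $(p+1)$), and the point is that these almost simple groups of Lie type in their actions of the stated degree do admit derangements of some prime order, so they cannot be elusive. This is essentially a finiteness/arithmetic check on the fixed-point structure of the point stabiliser $C_s < C_{(p-1)/2}$ inside $\PSL_2(p)$. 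Case (ii) requires a little more thought: when $|V\Gamma|$ is a power of $2$, I would appeal to the theorem of Fein, Kantor and Schacher \cite{FKS}, which guarantees a derangement of prime power order; combined with the known classification of elusive groups (Giudici's theorem \cite[Theorem 1.1]{G}, quoted in the introduction), the only primitive elusive examples are built from $\mathrm{M}_{11}$ on $12$ points, whose degree is not a $2$-power, forcing a contradiction unless we are already in case (iii).

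This leaves case (iii), namely $\Gamma = K_{12}$, $G = \mathrm{M}_{11}$ and $p = 11$, which is precisely the conclusion of the corollary. To complete the argument I would verify that this example genuinely is elusive and arc-transitive of prime valency, which is already recorded in the introduction: $\mathrm{M}_{11}$ acting on $12$ points is the unique primitive elusive group (Remark \ref{r:prim}(b) and the surrounding discussion), and its action on $K_{12}$ has valency $11$, a prime. Hence $\Gamma = K_{12}$ and $G = \mathrm{M}_{11}$ is the only possibility, as claimed.

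I expect the main obstacle to be the elimination of cases (iv) and (v): one must confirm that $\PSL_2(p)$ and $\PGL_2(p)$ with $p$ a Mersenne prime, in their actions of degree $(p^2-1)/2s$ and $(p^2-1)/s$ on the relevant cosets, always possess a derangement of prime order. This is an arithmetic statement about the element orders of $\PSL_2(p)$ (involutions, unipotent elements of order $p$, and semisimple elements of orders dividing $(p\pm1)/2$) meeting or avoiding the cyclic point stabiliser $C_s$, and it relies on the detailed subgroup structure of the point stabiliser together with the fact that $\Gamma$ is a genuine graph rather than the trivial or complete one. Everything else reduces to quoting elusivity against case (i) and invoking \cite{FKS} and \cite{G} against case (ii).
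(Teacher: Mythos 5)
Your overall strategy---specialise Theorem \ref{t:graph} to elusive $G$ and eliminate everything except case (iii)---matches the paper's, but there is a genuine gap before you can even begin: Theorem \ref{t:graph} carries the hypothesis that $G$ is quasiprimitive or biquasiprimitive on $V\Gamma$, whereas Corollary \ref{c:prime} does not, and you apply the theorem without verifying it. The paper's proof starts precisely with this reduction: since the valency is prime, the local action of $G_v$ on the neighbours of $v$ is primitive, so by \cite[Lemma 1.6]{P85} every normal subgroup of $G$ with at least three orbits on vertices is semiregular; a nontrivial semiregular subgroup contains a derangement of prime order, contradicting elusivity, so every nontrivial normal subgroup has at most two orbits and $G$ is quasiprimitive or biquasiprimitive. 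Without this step the five-case analysis is not available to you.

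Your elimination of case (ii) also fails as written. The Fein--Kantor--Schacher theorem \cite{FKS} yields a derangement of prime \emph{power} order, which is no contradiction with elusivity: ${\rm M}_{11}$ on $12$ points is elusive yet contains derangements of order $8$, so \cite{FKS} can never rule anything out here. Moreover, Giudici's theorem \cite[Theorem 1.1]{G} classifies only the quasiprimitive elusive groups; in case (ii) the group $G$ may be biquasiprimitive, which your argument leaves untouched (you would need \cite[Theorem 1.4]{GX} as well). The paper's route is both complete and shorter: by \cite[Proposition 3.2]{maru81}, a vertex-transitive group on a set of $2$-power size contains a derangement of order two (in essence, a central involution of a transitive Sylow $2$-subgroup is semiregular), which contradicts elusivity directly. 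For cases (iv) and (v) your instinct that the derangement should be an involution is correct and is exactly the paper's one-line argument ($|G|$ is even while the point stabiliser has odd order, since $s$ divides $(p-1)/2=2^{m-1}-1$); but note your alternative candidates are dead ends---elements of order $p$ lie in the stabiliser $C_p{:}C_s$, and the only prime dividing $p+1=2^m$ is $2$---so the ``arithmetic check'' you defer really does reduce to this parity observation and must be made explicit.
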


\begin{Corollary}\label{c:6}
The smallest integer $k$ such that there is a finite connected graph of valency $k$ with an elusive arc-transitive group of automorphisms is $6$.
\end{Corollary}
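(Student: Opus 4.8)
The plan is to prove the two halves of the statement separately: that no finite connected graph of valency at most $5$ carries an elusive arc-transitive group, and that $K_{6,6}$ carries one (namely ${\rm M}_{10}$), giving $6$ as the minimum. For the lower bound I would dispose of the small valencies first. If the valency is $1$ then $\Gamma=K_2$ and the only arc-transitive group is $S_2$, whose nontrivial element is a derangement. If the valency is a prime, so $2$, $3$ or $5$ in the relevant range, then Corollary \ref{c:prime} applies at once: an elusive arc-transitive group of prime valency forces $\Gamma=K_{12}$ of valency $11$, a contradiction. This leaves the single composite value $4$ as the substantive case.

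To treat valency $4$, I would reduce to the quasiprimitive or biquasiprimitive case and then invoke the classification. Writing $G_v^{\Gamma(v)}\leqslant S_4$ for the local action: when it is primitive (that is, $A_4$ or $S_4$) the graph is locally-quasiprimitive, so by \cite[Lemma 1.6]{P85} every normal subgroup with at least three orbits is semiregular; since an elusive group has no nontrivial semiregular normal subgroup (such a subgroup would contain a derangement of prime order), $G$ must be quasiprimitive or biquasiprimitive. With $G$ of one of these types, Theorems \ref{t:prim2dash}, \ref{t:qprim2dash} and \ref{t:bqprim2dash} force $\soc(G)$ to be one of ${\rm M}_{11}^k$, $({}^2F_4(2)')^k$, $\PSL_2(p)^k$ with $p$ a Mersenne prime, or $A_6$ (the biquasiprimitive examples $({\rm M}_{10},A_5)$ and $(\Aut(A_6),S_5)$). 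Full elusivity eliminates the $\PSL_2(p)^k$ cases, since for a Mersenne prime $p$ the socle point stabiliser has odd order, so every involution of $\PSL_2(p)$ is fixed-point-free, a derangement; the $({}^2F_4(2)')^k$ cases admit no suborbit of length $4$ because the smallest index of a proper subgroup of $\PSL_2(25)$ exceeds $4$. For the surviving socles the nontrivial subdegrees are at least $11$ (for ${\rm M}_{11}^k$) or lie in $\{5,6\}$ (for $A_6$, as $A_5$ has orbits $1+5+6$ on the $12$ points), so none equals $4$ and the valency-$4$ graph cannot occur.

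For the upper bound I would exhibit $K_{6,6}$ with $G={\rm M}_{10}$ acting on the $12$ cosets of a subgroup $A_5$. Here $\soc(G)=A_6$ has two orbits $\mathcal{O}_1,\mathcal{O}_2$ of size $6$, so $G$ is biquasiprimitive, and one checks that the exceptional outer automorphism of $A_6$ realised in ${\rm M}_{10}$ fuses the two $A_6$-classes of elements of order $3$ and the two of order $5$; thus ${\rm M}_{10}$ has a single class of elements of each prime order $2,3,5$, and since $A_5$ contains such elements, every element of prime order fixes a point and $G$ is elusive. The orbits of $G_\alpha=A_5$ on the $12$ points have lengths $1$, $5$, $6$, the length-$6$ orbit being exactly the other socle-orbit $\mathcal{O}_2$; as the unique suborbit of its length it is self-paired, and the associated graph joins every point of $\mathcal{O}_1$ to all of $\mathcal{O}_2$, namely the connected graph $K_{6,6}$, on which ${\rm M}_{10}$ acts arc-transitively. (The length-$5$ suborbit instead gives the disconnected graph $2K_6$, which is why valency $5$ yields no connected example, consistent with Corollary \ref{c:prime}.)

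The hard part will be completing the valency-$4$ reduction when the local action is imprimitive, so one of $C_4$, $V_4$, $D_8$, each a $2$-group: then $\Gamma$ is not locally-quasiprimitive and \cite[Lemma 1.6]{P85} no longer forces $G$ to be quasiprimitive or biquasiprimitive. I would handle this by normal quotients. First, $G$ cannot have $2$-power degree, since a Sylow $2$-subgroup of a transitive group of $2$-power degree is transitive and its centre contains an involution whose fixed-point set, being invariant under a transitive $2$-group, is empty, yielding a derangement. In the remaining imprimitive cases I would pass to the quotient graph modulo a minimal normal subgroup $N$ (necessarily non-semiregular by elusivity) and control the valency and the derangement condition under this quotient, reducing back to the quasiprimitive and biquasiprimitive classification; making this descent respect elusivity is where the main technical care is required.
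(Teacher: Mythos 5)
Your proposal has a genuine gap at exactly the point you flag yourself: the valency-$4$ case with imprimitive local action. The paper does not prove this case at all --- its entire lower bound $k \geqslant 5$ is the citation of \cite[Theorem 1.1]{GMPV}, which is a substantial standalone result covering all (di)graphs of valency at most $4$, and the rest of the paper's proof is equally short ($k \leqslant 6$ from the example in \cite[Theorem 3.5(3)]{GMPV}, and $k \neq 5$ from Corollary \ref{c:prime}). Your attempt to reprove the valency-$4$ bound from scratch succeeds only when the local action $G_v^{\Gamma(v)}$ is primitive, where \cite[Lemma 1.6]{P85} forces quasiprimitivity or biquasiprimitivity and the classification theorems apply. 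When the local action is $C_4$, $V_4$ or $D_8$, your proposed normal-quotient descent does not close: elusivity is not inherited by the quotient action in either direction (an element of $G/N$ of prime order fixing an $N$-orbit need not lift to an element of prime order in $G$ with a fixed vertex, and a derangement of $G$ need not project to a derangement of $G/N$, nor even to an element of prime order), the quotient graph may degenerate to valency $2$ or $1$ or to a single vertex, and the induced group may act unfaithfully. Controlling all of this is precisely the content of the GMPV paper; "making this descent respect elusivity" is not a technical detail to be supplied but the entire difficulty, so as written your lower bound is not established. The honest fix is to do what the paper does and quote \cite[Theorem 1.1]{GMPV} for valencies at most $4$ (your prime-valency reductions via Corollary \ref{c:prime} then handle $k=5$, exactly as in the paper).

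The two halves that do work are worth noting. Your upper bound is a genuine improvement in self-containedness: rather than citing the GMPV example, you exhibit $K_{6,6}$ with $G={\rm M}_{10}$ on the $12$ cosets of $A_5$, and this checks out against the paper's own Lemma \ref{lem:smallsubdegs}(ii), which records the subdegrees $\{1,5,6\}$ for $({\rm M}_{10},A_5)$; the length-$6$ suborbit is the second $\soc(G)$-orbit, is self-paired as the unique suborbit of its length, and yields the connected arc-transitive $K_{6,6}$, while elusivity follows since all involutions of ${\rm M}_{10}$ lie in $A_6$ in a single class meeting $A_5$, and the outer coset fuses the two classes of order-$3$ elements. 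One smaller inaccuracy in your quasiprimitive/biquasiprimitive analysis: the claim that every involution of $\PSL_2(p)^k$ is a derangement because the point stabiliser has odd order is correct in the quasiprimitive and faithful biquasiprimitive cases, but fails in case (c)(iii) of Theorem \ref{t:bqprim2dash}, where $N_\alpha$ contains a factor $\PSL_2(p)^{k/2}$ of even order; there one must instead use a full-support involution $(t,\ldots,t)$, whose conjugates cannot lie in $N_\alpha$ because the projections of $N_\alpha$ to the first $k/2$ factors have odd order. This is easily patched, but the valency-$4$ gap above is not.
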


Note that Corollary \ref{c:6} answers a question posed in \cite{GMPV}. The smallest $k$ for which there is a finite connected graph of valency $k$ with an elusive \emph{vertex}-transitive group of automorphisms is still unknown.

\vs

\noindent \textbf{Notation.} Our notation is standard. We write $H.K$ to denote an extension of $H$ by $K$, and $H{:}K$ if the extension splits. If $n$ is a positive integer then $C_n$ denotes a cyclic group of order $n$, and $H^n$ is the direct product of $n$ copies of $H$. If $p$ is a prime, then ${\rm O}_{p}(H)$ denotes the largest normal $p$-subgroup of $H$. Finally, if $H$ acts on a set $\Delta$ then we write $H^{\Delta}$ to denote the induced permutation group on $\Delta$.

\section{Simple groups}\label{s:simple}
 
In \cite[Theorem 1.3]{G}, Giudici determines the nonabelian finite simple groups $T$ with a proper subgroup that meets every 
$\Aut(T)$-conjugacy class of elements of prime order. We can adopt a similar approach in order to establish an analogous result for odd primes, which will play a key role in the proofs of our main theorems.  

\begin{remark}\label{r:thm}
In the first row of Table \ref{tab:alloddautT}, $p$ is a Mersenne prime and $r$ is the product of the distinct prime divisors of $(p-1)/2$. Also observe that $|T:H|$ is a $2$-power if $H = C_p{:}C_{(p-1)/2}$, so in this case the action of $T$ on the cosets of $H$ is not $2'$-elusive (recall that  for $2'$-elusivity, the degree must be divisible by an odd prime). 
\end{remark}

\begin{theorem}\label{thm:autT}
Let $T$ be a nonabelian finite simple group.
\begin{itemize}\addtolength{\itemsep}{0.2\baselineskip}
\item[{\rm (i)}] $T$ has a proper subgroup $H$ that meets every $\Aut(T)$-class of elements of odd prime order in $T$ if and only if $(T,H)$ is one of the cases in Table \ref{tab:alloddautT}. 
\item[{\rm (ii)}] In addition, $H$ meets every $T$-class of elements of odd prime order in $T$ if and only if $T={}^2F_4(2)'$, ${\rm M}_{11}$ or $\PSL_2(p)$ with $p$ a Mersenne prime.
\end{itemize}
\end{theorem}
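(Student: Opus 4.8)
\textbf{Proof proposal for Theorem \ref{thm:autT}.}

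The plan is to reduce the statement to a finite inspection across the families of finite simple groups and then carry out that inspection using character-theoretic and subgroup-structure information. The key observation is that a proper subgroup $H$ meets every $\Aut(T)$-class of elements of odd prime order precisely when, for each odd prime $\ell$ dividing $|T|$, the subgroup $H$ contains a representative of every $\Aut(T)$-orbit on the classes of elements of order $\ell$. Equivalently, writing $P(T)$ for the set of odd primes dividing $|T|$, we need $H$ to contain an element from each such orbit simultaneously. I would first record the easy necessary conditions: since $H$ must contain elements of order $\ell$ for every $\ell \in P(T)$, the order $|H|$ is divisible by every odd prime dividing $|T|$; and since $H$ is proper, $|T:H|$ must be divisible by some prime, which (given the previous condition, as the coset action need not be $2'$-elusive) is typically forced to be a power of $2$ in the borderline cases. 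This already severely constrains $H$ via the classification of maximal subgroups and Lagrange's theorem.

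The main structural input is Giudici's analogous result \cite[Theorem 1.3]{G}, which treats \emph{all} primes of prime order rather than just the odd ones. My strategy is to leverage that theorem directly: any $(T,H)$ satisfying our odd-prime condition is obtained from a case in Giudici's list by \emph{relaxing} the requirement at the prime $2$. Concretely, I would start from the list of pairs where $H$ meets every $\Aut(T)$-class of elements of \emph{every} prime order (these are exactly the elusive examples, essentially ${\rm M}_{11}$ with $H = \PSL_2(11)$ and a handful of others), and then systematically enlarge the search to allow $H$ to miss the $\Aut(T)$-classes of involutions. For each simple group family in turn --- alternating groups, sporadic groups, and the groups of Lie type split by the standard rank/characteristic dichotomy --- I would compare the set of odd-prime-order classes fused under $\Aut(T)$ against the fusion-and-containment data for the maximal subgroups $H$. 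For alternating and sporadic groups this is a finite check that can be read off from the {\sc Atlas} and from \cite{BGW}, where the $r$-elusive primitive almost simple groups with socle alternating or sporadic are already determined. For the Lie type groups I expect $\PSL_2(q)$ to be the critical family: here the odd-order semisimple classes are controlled by the tori $C_{(q-1)/\gcd(2,q-1)}$ and $C_{(q+1)/\gcd(2,q-1)}$ and the unipotent class of order $p$, so that a Borel-type subgroup $C_p{:}C_{(p-1)/2}$ meets every odd-prime class exactly when every odd prime dividing $(p-1)/2$ or $p$ is already accounted for --- and the remaining prime $p+1 = 2^a$ forces $p$ to be a Mersenne prime. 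This is how the first row of Table \ref{tab:alloddautT} arises.

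Part (ii), strengthening from $\Aut(T)$-classes to $T$-classes, then amounts to checking for each surviving pair whether the $\Aut(T)$-fusion of odd-order classes used in part (i) was genuinely nontrivial. The point is that $H$ meets every $T$-class (not merely every fused $\Aut(T)$-class) iff no proper fusion of odd-order classes occurred, i.e.\ the outer automorphisms act trivially on the relevant classes; so I would go through the short list from (i) and compute the action of $\Out(T)$ on the classes of odd prime order. For $\PSL_2(p)$ with $p$ Mersenne, the diagonal and field automorphisms are accounted for by the smallness of $\Out$, and one checks directly that the two classes of each odd order that are swapped by $\PGL_2(p)$ are both met by $H$; the same holds for ${\rm M}_{11}$ (trivial outer automorphism group) and for ${}^2F_4(2)'$, whose outer automorphism of order $2$ must be checked against the classes of order $3, 5, 13$. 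The main obstacle I anticipate is the Lie-type analysis in small rank and, in particular, verifying the exceptional case $T = {}^2F_4(2)'$ with $H = \PSL_2(25)$: this requires explicit class-fusion data rather than a uniform argument, and distinguishing the two extensions $\PSL_2(25).2_3$ (the one relevant here) from the other index-two extensions is exactly the delicate point flagged in Remark \ref{r:prim}(c). I would isolate this case and dispatch it by direct computation in the relevant character table.
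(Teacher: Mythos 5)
Your overall shape (family-by-family inspection, the arithmetic necessary condition $\pi(H)\supseteq\pi(T)\setminus\{2\}$, the Borel analysis forcing $p$ Mersenne, a direct computation for ${}^2F_4(2)'$) points in the right direction, but there is a genuine gap in the reduction. You propose to \emph{start} from Giudici's list in \cite[Theorem 1.3]{G} and ``relax the requirement at the prime $2$''. This cannot work as a generating procedure: the odd-prime condition is strictly weaker, so its solution set strictly contains Giudici's (which is essentially just $({\rm M}_{11},\PSL_2(11))$), and the new examples --- $(\POmega^+_8(3),\Omega_7(3))$, $(\Omega^+_8(2),{\rm Sp}_6(2))$, $(\Omega^+_8(2),A_9)$, $(A_6,A_5)$, $({}^2F_4(2)',\PSL_2(25))$ and the whole $\PSL_2(p)$ interval --- must be unearthed by a fresh classification, not by perturbing the elusive list. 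What makes that classification finite in the paper is the Liebeck--Praeger--Saxl machinery \cite[Theorem 4 and Tables 10.1--10.7]{LPS}, which lists all proper subgroups whose order is divisible by prescribed collections of primes; your substitutes (``classification of maximal subgroups and Lagrange's theorem'', the {\sc Atlas}) give no uniform handle on $A_n$ for unbounded $n$ --- the paper needs \cite[Theorem 4]{LPS} to force $H=(S_k\times S_{n-k})\cap T$, then Sylvester's theorem and \cite[Corollary 3.2(iii)]{BGW} to produce a derangement class --- nor on Lie type groups of unbounded rank. Symptomatically, your sketch never surfaces the orthogonal rows of Table \ref{tab:alloddautT} at all: the framing ``$\PSL_2(q)$ is the critical family, ${}^2F_4(2)'$ is the main obstacle'' would simply miss them, and likewise $(A_6,A_5)$. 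A further omission: the theorem concerns \emph{arbitrary} proper $H$, not just maximal $H$. The paper first reduces to maximal $H$ and then descends to proper subgroups of the survivors; the first row of the table, $C_p{:}C_r\leqslant H\leqslant C_p{:}C_{(p-1)/2}$, exists only because of this descent step, which your proposal does not perform.

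Your criterion for part (ii) is also incorrect as stated. You claim $H$ meets every $T$-class of odd-prime-order elements \emph{iff} no proper $\Aut(T)$-fusion of such classes occurs. This fails exactly in the case you need to keep: for $T=\PSL_2(p)$ with $p$ Mersenne, $\PGL_2(p)$ fuses the two $T$-classes of elements of order $p$, yet $H\geqslant C_p{:}C_r$ contains a full Sylow $p$-subgroup and therefore meets both classes --- so your criterion would wrongly exclude $\PSL_2(p)$ from part (ii). The paper's actual argument is direct: $\PSL_2(p)$ has a unique conjugacy class of subgroups of each odd prime order, whence every $T$-class of such elements meets $H$; conversely, for $\Omega^+_8(2)$ and $\POmega^+_8(3)$ it invokes \cite[Sections 3.8--3.9]{G} to exhibit $T$-classes missing $H$ (these are precisely the cases where triality fusion is doing the work in part (i)), and for $A_6$ it compares the single class of order-$3$ elements in $A_5$ with the two classes in $A_6$. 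Your later sentence (``one checks directly that the two classes \dots are both met by $H$'') is the right computation, but it contradicts your stated equivalence; as written, the ``iff'' is a logical error, and without the orthogonal cases in hand you would have no occasion to notice that the $\Aut(T)$-versus-$T$ distinction is exactly what separates parts (i) and (ii) of the theorem.
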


\renewcommand{\arraystretch}{1.1}
\begin{table}
$$\begin{array}{ll} \hline
T & H  \\ \hline
\PSL_2(p) & C_p{:}C_r\leqslant H\leqslant C_p{:}C_{(p-1)/2} \;\; \mbox{(see Remark \ref{r:thm})} \\
\POmega^+_{8}(3) & \Omega_7(3) \\
\Omega^+_8(2) & {\rm Sp}_{6}(2)  \\
\Omega^+_8(2) & A_9 \\
{}^2F_4(2)' & \PSL_2(25)  \\
A_6 & A_5 \\
{\rm M}_{11} & \PSL_2(11)  \\ \hline
\end{array}$$
\caption{The cases $(T,H)$ in Theorem \ref{thm:autT}(i)}
\label{tab:alloddautT}
\end{table}
\renewcommand{\arraystretch}{1}

\begin{proof}
Suppose $H<T$ is a proper subgroup that meets every $\Aut(T)$-class of elements  of odd prime order in $T$, so every odd prime divisor of $|T|$ also divides $|H|$. Moreover, if $H\leqslant K\leqslant T$ then every $\Aut(T)$-class of elements of odd prime order in $T$ meets $K$. Thus we will initially assume that $H$ is a maximal subgroup of $T$; if $(T,H)$ is an example then we will need to check if any proper subgroups of $H$ also meet every $\Aut(T)$-class of elements of odd prime order.

First assume that $T$ is a sporadic simple group. Here the possibilities for $T$ and $H$ (with $H$ maximal and $|H|$ divisible by every odd prime divisor of $|T|$) can be read off from \cite[Table 10.6]{LPS}:
$$\begin{array}{llll}
({\rm M}_{11},\PSL_{2}(11)) & ({\rm M}_{12},\PSL_{2}(11)) & ({\rm M}_{12},{\rm M}_{11}) & ({\rm M}_{24},{\rm M}_{23}) \\
({\rm HS}, {\rm M}_{22}) & ({\rm McL}, {\rm M}_{22}) & ({\rm Co}_{2}, {\rm M}_{23}) & ({\rm Co}_{3}, {\rm M}_{23}) 
\end{array}$$
It follows that $\pi(H)=\pi(T)$, where $\pi(X)$ is the set of prime divisors of $|X|$. The cases
$({\rm McL}, {\rm M}_{22})$ and $({\rm Co}_{2}, {\rm M}_{23})$ are ruled out in \cite[Section 3.11]{G}, where an $\Aut(T)$-class of elements of odd prime order not meeting $H$ is identified. If $T= {\rm M}_{11}$, ${\rm M}_{24}$ or ${\rm Co}_3$ then $T = {\rm Aut}(T)$ and by applying \cite[Corollary 1.2]{BGW} we deduce that $({\rm M}_{11}, \PSL_2(11))$ is the only example. In addition, no proper subgroup of $\PSL_2(11)$ has order divisible by every  odd prime divisor of $|{\rm M}_{11}|$, so no further examples arise. In the remaining three cases, we can use \cite{atlas} to identify an $\Aut(T)$-class of elements of odd prime order $r$ that does not meet $H$ (indeed, take $r=5$ if $(T,H)=({\rm HS}, {\rm M}_{22})$, and $r=3$ in the other two cases).

Next assume $T=A_n$ is an alternating group. Since the two largest primes at most $n$ must divide $|H|$, \cite[Theorem 4]{LPS} implies that $H\cong (S_k\times S_{n-k})\cap T$ for some $1 \leqs k<n/2$ (note that this includes the case $(T,H) = (A_6,\PSL_2(5))$). In particular, the action of $T$ on the set of right cosets of $H$ is permutation isomorphic to the action of $T$ on the set of subsets of $\{1,\ldots,n\}$ of size $k$. Since this action extends to $S_n$, it follows that if $n \neq 6$ then the $\Aut(T)$-class of an element $t \in T$ meets $H$ if and only if $t$ fixes a $k$-set. By a theorem of Sylvester \cite{sylvester}, $\binom{n}{k}$  is divisible by an odd prime, so \cite[Corollary 3.2(iii)]{BGW} implies that there is an $\Aut(T)$-class of elements of odd prime order that does not meet $H$. Finally, if $n=6$ then it is easy to check that the only subgroups $H$ of $T$ with the required property are isomorphic to $A_5$. In addition, note that $A_5$ has a unique class of elements of order $3$, but $A_6$ has two, so $H$ does not meet every $T$-class of elements of odd prime order.

For the remainder, we may assume that $T$ is a simple group of Lie type. By \cite[Theorem 4(i)]{LPS}, the possibilities for $T$ and $H$ can be read off from \cite[Tables 10.1--10.5]{LPS}. More precisely, these tables give the proper subgroups $M$ of $T$ with the property that $|M|$ is divisible by a specific collection of odd prime divisors of $|T|$. 
By inspection, and recalling that we are assuming $H$ is maximal, we deduce that either $|H|$ is even, or $(T,H) = (\PSL_3(3),C_{13}{:}C_3)$ or $(\PSL_2(p),C_p{:}C_{(p-1)/2})$ for a Mersenne prime $p$. 

If $(T,H) = (\PSL_3(3),C_{13}{:}C_3)$ then $T$ has two $\Aut(T)$-classes of subgroups of order $3$, and $H$ has a unique such class, so there is an $\Aut(T)$-class of elements of order $3$ that misses $H$. Now assume $(T,H) = (\PSL_2(p),C_p{:}C_{(p-1)/2})$ with $p$ a Mersenne prime. Here $T$ has a unique class of subgroups of each odd prime order, hence every $T$-class of elements of odd prime order meets $H$. The same conclusion holds for any subgroup $L<H$ with $\pi(H)=\pi(L)$, so we deduce that  
$$C_p{:}C_r\leqslant H\leqslant C_p{:}C_{(p-1)/2}$$
as in the first row of Table \ref{tab:alloddautT} (where $r$ is the product of the distinct prime divisors of $(p-1)/2$).

To complete the proof we may assume that $|H|$ is even and thus $\pi(T)=\pi(H)$. Here the possibilities for $(T,H)$ can be read off from \cite[Table 10.7]{LPS}. These cases were studied in \cite[Section 3]{G}, where in most instances an $\Aut(T)$-class of elements of odd prime order that misses $H$ is identified. The exceptions are as follows:
$$\begin{array}{lll}
({\rm P\Omega}_{8}^{+}(3), \Omega_7(3)) & (\Omega_{8}^{+}(2), A_9) & (\Omega_{8}^{+}(2), {\rm Sp}_{6}(2)) \\
({\rm PSU}_{6}(2), {\rm M}_{22}) & ({\rm PSU}_{5}(2), {\rm PSL}_{2}(11)) &  ({}^2F_4(2)', {\rm PSL}_{2}(25)) 
\end{array}$$
For the cases with $T = {\rm PSU}_{6}(2)$ or ${\rm PSU}_{5}(2)$, one can use {\sc Magma} \cite{magma}, or the information in \cite{atlas}, to check that there is an $\Aut(T)$-class of elements of order $3$ that misses $H$. The remaining four cases are recorded in Table \ref{tab:alloddautT}. In each of these cases it is easy to see that $H$ does not contain a proper subgroup with the desired property, so we do not obtain any additional examples. 
Finally, as explained in \cite[Sections 3.8--3.9]{G}, if $(T,H)$ is one of the examples in Table 
\ref{tab:alloddautT} with $T = \Omega^+_8(2)$ or ${\rm P\Omega}^+_8(3)$ then $H$ does not meet every $T$-class of elements of odd prime order, so $(T,H)$ does not arise in part (ii) of Theorem \ref{thm:autT}.
\end{proof}

By applying Theorem \ref{thm:autT}, we can determine all the $2'$-elusive almost simple groups. In Table \ref{tab:2dashelusive}, as before, $p$ is a Mersenne prime and $r$ is the product of the distinct prime divisors of $(p-1)/2$.

\begin{theorem}\label{t:2dashas}
Let $G \leqs {\rm Sym}(\Omega)$ be a finite transitive almost simple permutation group with point stabiliser $H$. Then $G$ is $2'$-elusive if and only if $(G,H)$ is one of the cases in Table \ref{tab:2dashelusive}.
\end{theorem}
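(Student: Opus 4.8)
The plan is to use Theorem~\ref{thm:autT} as the engine and then to work through the resulting list of pairs $(T,H_0)$ one entry at a time. Write $T=\soc(G)$, so that $T\normeq G\leqs \Aut(T)$ and $C_G(T)=1$, and recall that an element $x\in G$ is a derangement if and only if $x^G\cap H=\emptyset$. First I would record the basic reduction: if $G$ is $2'$-elusive then every element $x\in T$ of odd prime order is a non-derangement, so $x^G\cap H\neq\emptyset$; since $T$ is normal we have $x^G\subseteq T$, whence $x$ is $G$-conjugate into $H_0:=H\cap T$. As $H$ is core-free, $H_0$ is a proper subgroup of $T$, and it meets every $G$-class of elements of odd prime order in $T$. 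Because $G$ acts on $T$ as a subgroup of $\Aut(T)$, each $G$-class is contained in an $\Aut(T)$-class, so $H_0$ meets every $\Aut(T)$-class of elements of odd prime order in $T$; by Theorem~\ref{thm:autT}(i) this forces $(T,H_0)$ to be one of the cases in Table~\ref{tab:alloddautT}, and of course $|\Omega|$ must be divisible by an odd prime.

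It then remains, for each entry of Table~\ref{tab:alloddautT}, to decide exactly which overgroups $T\leqs G\leqs\Aut(T)$ and which stabilisers $H$ with $H\cap T=H_0$ give a $2'$-elusive group. Two conditions must be verified beyond the $\Aut(T)$-class condition already secured: \emph{(inner)} the sharper statement that $H_0$ meets every $G$-class — not merely every $\Aut(T)$-class — of elements of odd prime order in $T$; and \emph{(outer)} that every element of odd prime order in $G\setminus T$ is a non-derangement, which, as such an element lies outside $T\supseteq H_0$, can happen only if $H\not\leqs T$ and $H$ meets every $G$-class of these outer elements. I would note that the outer condition is vacuous whenever $\Out(T)$ has no element of odd order, which covers $T=\PSL_2(p)$ $(\Out(T)=C_2)$, $T={\rm M}_{11}$ $(\Out(T)=1)$, $T={}^2F_4(2)'$ $(\Out(T)=C_2)$ and $T=A_6$ $(\Out(T)=C_2\times C_2)$. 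For the three socles $\PSL_2(p)$, ${}^2F_4(2)'$ and ${\rm M}_{11}$ of Theorem~\ref{thm:autT}(ii), $H_0$ in fact meets every $T$-class of elements of odd prime order, so the inner condition holds automatically for every $G$ with that socle; here the only work is to list the admissible $H$ — in the $\PSL_2(p)$ case the subgroups $C_p{:}C_s$ with $C_p{:}C_r\leqs C_p{:}C_s<C_p{:}C_{(p-1)/2}$, the final inclusion being strict precisely so that $|\Omega|$ is divisible by an odd prime (which forces $(p-1)/2$ to be non-square-free, as in Remark~\ref{r:thm}) — and to pass to $G=\PGL_2(p)$ or $G={}^2F_4(2)$ where appropriate. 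For $T=A_6$, with $H_0=A_5$, I would analyse the fusion of the two $A_6$-classes each of elements of order $3$ and of order $5$ under the overgroups $G\in\{A_6,S_6,\PGL_2(9),{\rm M}_{10},\Aut(A_6)\}$, and retain exactly those $(G,H)$ for which $A_5$ meets every resulting $G$-class.

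The main obstacle is the elimination of the two remaining socles in Table~\ref{tab:alloddautT}, namely $\Omega_8^+(2)$ (with $H_0=A_9$ or $\Sp_6(2)$) and $\POmega_8^+(3)$ (with $H_0=\Omega_7(3)$), for which, by Theorem~\ref{thm:autT}(ii), $H_0$ misses some $T$-class of elements of odd prime order. Here the missed class cannot be fixed by triality alone unless $G$ contains a triality element, since the only odd-order outer automorphisms in these cases have order $3$; so the inner condition forces $G\geqs T.3$, and then $G$ acquires outer elements of order $3$. The elimination now proceeds along one of two lines, according to the triality action: if $H_0$ is not normalised by any triality element (its $T$-class of subgroups being permuted nontrivially by triality), then no stabiliser $H$ with $H\cap T=H_0$ can contain an outer $3$-element and these outer elements are derangements; otherwise one must check directly that either an entire triality-orbit of odd-prime $T$-classes is missed by $H_0$, or some $G$-class of outer $3$-elements is missed by $N_G(H_0)$. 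Either way no $2'$-elusive group survives, but confirming this requires the precise triality action on the relevant classes of elements and of subgroups, which is where I expect to rely on the structure of $\Aut(T)$ together with explicit computation in {\sc Magma} \cite{magma} or the data in \cite{atlas}.

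Finally, for the forward direction I would verify that each surviving pair $(G,H)$ is genuinely $2'$-elusive: that $|\Omega|=|G:H|$ is divisible by an odd prime, and that every element of odd prime order of $G$ — inner, or outer where present — is a non-derangement, the latter following from the class-meeting analysis carried out above. Collecting the survivors then yields exactly the entries of Table~\ref{tab:2dashelusive}.
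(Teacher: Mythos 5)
Your proposal is correct and takes essentially the same route as the paper's proof: the same reduction via Theorem~\ref{thm:autT} to the pairs $(T,H\cap T)$ of Table~\ref{tab:alloddautT}, the same appeal to part (ii) of that theorem for the socles $\PSL_2(p)$, ${}^2F_4(2)'$ and ${\rm M}_{11}$, the same fusion analysis for $T=A_6$, and the same triality-based elimination of $\Omega_8^+(2)$ and $\POmega_8^+(3)$ (the paper realises the resulting outer elements of order $3$ as derangements because they permute the $T$-orbits nontrivially, you because they cannot be conjugated into $H\leqs N_G(H\cap T)$; these are the same argument). The only organisational difference is that the paper splits the analysis according to whether $T$ is transitive on $\Omega$ (equivalently $G=TH$), whereas you split the odd-prime-order elements into inner and outer ones and observe the outer condition is vacuous when $\Out(T)$ has no odd-order elements --- equivalent bookkeeping.
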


\renewcommand{\arraystretch}{1.1}
\begin{table}
$$\begin{array}{ll} \hline
G & H  \\ \hline
\mbox{$\PSL_2(p)$ or $\PGL_2(p)$} & C_p{:}C_r\leqslant H < C_p{:}C_{(p-1)/2} \\
\PGL_2(p) & C_p{:}C_{2r} \leqslant H < C_p{:}C_{p-1} \\
\mbox{${}^2F_4(2)'$ or ${}^2F_4(2)$} & \PSL_2(25)  \\
{}^2F_4(2) & \PSL_2(25).2_3 \\
\mbox{${\rm M}_{10}$ or $\Aut(A_6)$} & A_5  \\
\Aut(A_6) & S_5 \\
{\rm M}_{11} & \PSL_2(11)  \\ \hline
\end{array}$$
\caption{The $2'$-elusive almost simple groups}
\label{tab:2dashelusive}
\end{table}
\renewcommand{\arraystretch}{1}

\begin{proof}
Let $T$ denote the socle of $G$ and assume that $G$ is $2'$-elusive. Then $H\cap T$ meets every $G$-class of elements in $T$ of odd prime order, so $(T,H \cap T)$ is one of the cases arising in Theorem \ref{thm:autT}(i).

First assume $T$ is a transitive subgroup of $G$. Here $G=TH$ and thus $T$ is $2'$-elusive since $H$ meets every $T$-class of elements of odd prime order in $T$. Note that $|G:T| = |H:H \cap T|$. By applying Theorem \ref{thm:autT}(ii) we deduce that $(G,H)$ is one of the following:
\begin{itemize}\addtolength{\itemsep}{0.2\baselineskip}
\item[{\rm (a)}] $G={\rm M}_{11}$, $H={\rm PSL}_{2}(11)$
\item[{\rm (b)}] $G={}^2F_4(2)'$, $H={\rm PSL}_{2}(25)$
\item[{\rm (c)}] $G={}^2F_4(2)$, $H={\rm PSL}_{2}(25).2_3$
\item[{\rm (d)}] $G={\rm PSL}_{2}(p)$, $C_p{:}C_r \leqs H < C_p{:}C_{(p-1)/2}$
\item[{\rm (e)}] $G={\rm PGL}_{2}(p)$, $C_p{:}C_{2r} \leqs H < C_p{:}C_{p-1}$
\end{itemize}
where $p$ is a Mersenne prime and $r$ is the product of the distinct prime divisors of $(p-1)/2$. Note that in case (d) (and similarly in (e)) we require $H < C_p{:}C_{(p-1)/2}$ since $|\Omega|$ is divisible by an odd prime. Also observe that $G$ is primitive in cases (a), (b) and (c), and quasiprimitive (and imprimitive) in cases (d) and (e).

Now assume $T$ is intransitive, in which case the orbits of $T$ on $\Omega$ have equal size and the actions of $T$ on each orbit are isomorphic. Clearly, $T \ne {\rm M}_{11}$. If $T={}^2F_4(2)'$ or ${\rm PSL}_{2}(p)$ (with $p$ a Mersenne prime) then $(G,H)$ is one of the following:
\begin{itemize}\addtolength{\itemsep}{0.2\baselineskip}
\item[{\rm (f)}] $G={}^2F_4(2)$, $H={\rm PSL}_{2}(25)$
\item[{\rm (g)}] $G={\rm PGL}_{2}(p)$, $C_p{:}C_{r} \leqs H < C_p{:}C_{(p-1)/2}$
\end{itemize}
where $r$ is the product of the distinct prime divisors of $(p-1)/2$ as before. Next suppose 
$T=\Omega^+_8(2)$ or $\POmega^+_8(3)$. As explained in \cite[Section 4]{G}, $G$ must contain a triality graph automorphism 
(if not, there are derangements of order $5$), but this implies that $G$ contains an element of order $3$ that permutes the orbits of $T$, which is a derangement. Finally, let us assume $T=A_6$, so $G \in \{S_6, {\rm M}_{10}, {\rm PGL}_{2}(9), {\rm Aut}(A_6)\}$. Here $G$ is $2'$-elusive if and only if the two $T$-classes of elements of order $3$ are fused in $G$, so $(G,H)$ is one of the following:
\begin{itemize}\addtolength{\itemsep}{0.2\baselineskip}
\item[{\rm (h)}] $G={\rm M}_{10}$, $H =A_5$
\item[{\rm (i)}] $G={\rm Aut}(A_6)$, $H \in \{A_5, S_5\}$
\end{itemize}
This completes the proof of Theorem \ref{t:2dashas}.
\end{proof}

It is worth recording the cases in Theorem \ref{t:2dashas} that arise when $G$ is primitive.

\begin{corollary}\label{c:2dashas}
Let $G \leqs {\rm Sym}(\Omega)$ be a finite primitive almost simple permutation group with point stabiliser $H$. Then $G$ is $2'$-elusive if and only if 
$$\mbox{$(G,H) = ({\rm M}_{11},{\rm PSL}_{2}(11))$, $({}^2F_4(2)',{\rm PSL}_{2}(25))$ or $({}^2F_4(2),{\rm PSL}_{2}(25).2_3)$.}$$
\end{corollary}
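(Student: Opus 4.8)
The plan is to read off the corollary from Theorem~\ref{t:2dashas} by isolating the primitive cases. A transitive almost simple group $G$ with point stabiliser $H$ is primitive exactly when $H$ is maximal in $G$, so it suffices to run through the rows of Table~\ref{tab:2dashelusive} and decide in each case whether $H$ is maximal.

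First I would dispose of the two families with socle $\PSL_2(p)$. In $\PSL_2(p)$ the Borel subgroup $C_p{:}C_{(p-1)/2}$ is maximal (it is the stabiliser in the natural $2$-transitive action on the projective line), while in $\PGL_2(p)$ the maximal Borel subgroup is $C_p{:}C_{p-1}$, inside which $C_p{:}C_{(p-1)/2}$ sits properly. In both of the first two rows $H$ is, by construction, a \emph{proper} subgroup of one of these Borel subgroups, so $H$ is never maximal and all of these groups are imprimitive. The essential point is that the containments recorded in the table are strict, which is forced by the requirement that $|\Omega|$ be divisible by an odd prime.

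Next I would go through the remaining rows. For $({\rm M}_{11},\PSL_2(11))$, $({}^2F_4(2)',\PSL_2(25))$ and $({}^2F_4(2),\PSL_2(25).2_3)$ the subgroup $H$ is maximal in $G$ by inspection of \cite{atlas}, so these are primitive and give precisely the three cases in the statement. The other rows I would show to be imprimitive by exhibiting a proper intermediate overgroup of $H$: if $G={}^2F_4(2)$ and $H=\PSL_2(25)$ then $H<N_G(H)=\PSL_2(25).2_3<G$; if $G\in\{{\rm M}_{10},\Aut(A_6)\}$ and $H=A_5$ then $H=H\cap T\leqslant T<G$, where $T$ is the socle $A_6$; and if $G=\Aut(A_6)$ with $H=S_5$ then, writing $H=A_5.\langle t\rangle$, one finds $H\leqslant\langle A_6,t\rangle$, a proper subgroup of $\Aut(A_6)$.

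The only step needing a little care is the last, $(\Aut(A_6),S_5)$, since here $H$ does not lie in the socle and the trivial containment used for $H=A_5$ is unavailable. The remedy is to observe that $A_6$ has no subgroup isomorphic to $S_5$ --- its maximal subgroups $A_5$, $3^2{:}4$ and $S_4$ all have order less than $120$ --- so the element $t\in S_5\setminus A_5$ must induce a nontrivial outer automorphism; hence $\langle A_6,t\rangle=A_6.\langle\bar t\rangle$ is an index-two, and in particular proper, subgroup of $\Aut(A_6)$ properly containing $S_5$, which establishes non-maximality. Assembling the three maximal cases completes the argument.
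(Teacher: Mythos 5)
Your proposal is correct and takes essentially the same route as the paper: Corollary \ref{c:2dashas} is simply read off from Theorem \ref{t:2dashas} by deciding which rows of Table \ref{tab:2dashelusive} have $H$ maximal in $G$, and all of your maximality verdicts are accurate, including the only delicate one, $(\Aut(A_6),S_5)$, where your observation that $A_6$ contains no $S_5$ forces $S_5<\langle A_6,t\rangle$, an index-two subgroup of $\Aut(A_6)$. The paper handles the imprimitive rows inside the proof of Theorem \ref{t:2dashas} (the strict Borel containments in the $\PSL_2(p)$ cases, and intransitivity of the socle in the remaining cases, which rules out primitivity outright), but this differs from your explicit intermediate-subgroup checks only cosmetically.
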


\section{Quasiprimitive groups}\label{s:quasi}

In this section we investigate the structure of $2'$-elusive quasiprimitive groups. Our aim is to  prove Theorems \ref{t:prim2dash} and \ref{t:qprim2dash}. We begin by recording a lemma which will be useful later.  

\begin{lemma}\label{lemPA}
Let $G \leqs {\rm Sym}(\Omega)$ be a finite permutation group with a transitive normal subgroup $N=T^k$ such that $C_G(N)=1$, where $T$ is a  nonabelian simple group and $k \geqs 2$. Let $\alpha\in \Omega$ and assume that $N_\alpha=S^k$ for some proper subgroup $S<T$. Then we can identify $\Omega$ with the Cartesian product $\Delta^k$, where $\Delta=T/S$, such that $G$ is permutation isomorphic to a subgroup of $\Aut(T)\Wr S_k$  acting on $\Delta^k$ with its usual product action.
\end{lemma}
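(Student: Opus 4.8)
The plan is to read off the product-action structure directly from the normal subgroup $N = T^k$ and then pin down the top group using $C_G(N)=1$. Write $N = T_1 \times \cdots \times T_k$ with each $T_i \cong T$. Since $T$ is nonabelian simple, the $T_i$ are precisely the minimal normal subgroups of $N$, and as $G$ normalises $N$ it permutes them by conjugation, giving a homomorphism $G \to S_k$ with image $K$. The hypothesis $N_\alpha = S^k$ means exactly that $N_\alpha = \prod_i S_i$ is a direct product with $S_i = N_\alpha \cap T_i$ a copy of $S$, so I may fix isomorphisms $\theta_i \colon T_i \to T$ with $\theta_i(S_i)=S$ and set $\Delta = T/S$. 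As a guiding consistency check, note that $C_G(N)=1$ already yields an abstract embedding $G \hookrightarrow \Aut(N) = \Aut(T)\Wr S_k$; the real work is to upgrade this to a permutation isomorphism onto a product-action subgroup.

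First I would set up the $N$-equivariant identification $\Omega \cong \Delta^k$. As $N$ is transitive, $\Omega \cong N/N_\alpha$ with $\alpha \mapsto N_\alpha$ and $N$ acting by right multiplication; since $N_\alpha = \prod_i S_i$, the map sending $N_\alpha(t_1,\dots,t_k)$ to $(S_1 t_1, \dots, S_k t_k)$ is an $N$-equivariant bijection onto $\prod_i T_i/S_i$, which the $\theta_i$ carry to $\Delta^k$. Under this identification $\alpha$ is the base point $(S,\dots,S)$ and $N$ acts coordinate-wise, i.e. as the base group of the product action. Faithfulness of $N$ on $\Omega$ forces $T$ to act faithfully on $\Delta$ (an element of $T_i$ trivial on $\Delta$ acts trivially on all of $\Omega$), so $S$ is core-free and the image $\bar{T}\leqslant \Sym(\Delta)$ of $T$ is isomorphic to $T$.

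Next I would show $G$ preserves this product structure. For each $i$ the partition $\mathcal{P}_i$ of $\Omega$ into orbits of $\prod_{j \ne i} T_j$ consists of the fibres of the $i$-th projection $\Omega \cong \Delta^k \to \Delta$, and $\{\mathcal{P}_1,\dots,\mathcal{P}_k\}$ is a Cartesian decomposition of $\Omega$. Since $G$ permutes the factors $T_j$ according to $K$, it permutes the complements $\prod_{j\ne i}T_j$, hence permutes the $\mathcal{P}_i$, so $G$ preserves the Cartesian decomposition. By the standard correspondence between Cartesian decompositions and product action (the transitive case of the wreath-product embedding theorem), this gives a permutation embedding $G \leqslant \Sym(\Delta)\Wr K \leqslant \Sym(\Delta)\Wr S_k$ on $\Delta^k$ in product action, with $K$ the top component. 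The combinatorial input is easily checked: each $h \in G_\alpha$ normalises $N_\alpha$ and sends $T_i$ to $T_{i\sigma}$ (where $\sigma$ is its image in $K$), hence sends $S_i$ to $S_{i\sigma}$, so it carries $S_i$-cosets to $S_{i\sigma}$-cosets via conjugation — exactly a coordinate permutation with per-coordinate twists.

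Finally I would refine the top group from $\Sym(\Delta)$ to $\Aut(T)$. As $G$ normalises $N$, its image lies in $N_{\Sym(\Delta)\Wr S_k}(\bar{T}^k) = N_{\Sym(\Delta)}(\bar{T})\Wr S_k$. Writing $A = N_{\Sym(\Delta)}(\bar{T})$, conjugation gives a map $A \to \Aut(\bar{T}) = \Aut(T)$ whose kernel is $C_{\Sym(\Delta)}(\bar{T})$; the coordinate-wise copies of this kernel constitute $C_{\Sym(\Omega)}(N)$, which meets $G$ trivially by hypothesis. Hence the per-coordinate action of $G$ is faithfully induced by automorphisms of $T$, identifying $G$ with a subgroup of $\Aut(T)\Wr S_k$ acting on $\Delta^k$ in product action, as required. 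I expect this last refinement to be the main obstacle: one must reconcile the fact that $\Aut(T)$ does not act on $T/S$ in general with the assertion that $G$ embeds into $\Aut(T)\Wr S_k$. The resolution is to work inside $N_{\Sym(\Delta)}(\bar{T})$ rather than $\Aut(T)$ directly, and to use $C_G(N)=1$ to discard the centraliser $C_{\Sym(\Delta)}(\bar{T})$ (which is nontrivial precisely when $N_T(S) > S$); only after factoring out this centraliser does the coordinate action become a genuine $\Aut(T)$-action. The remaining bookkeeping — verifying that the right-translation action of the base group and the twist action of $G_\alpha$ assemble into a single product action — is routine but is the essential content underlying this step.
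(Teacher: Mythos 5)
Your proof is correct, but it takes a genuinely different route from the paper's. The paper argues by hand: it transfers the action of $G$ on $\Omega$ to the coset space $\Sigma = N/N_\alpha$ via the well-defined action $(N_\alpha m)^{ng} = N_\alpha (mn)^g$, then writes down an explicit bijection $\rho \colon \Sigma \to \Lambda^k$, where $\Lambda$ is the set of cosets of $S^*$ in ${\rm Inn}(T)$, and verifies the equivariance $\rho(\omega^x) = \rho(\omega)^{\varphi(x)}$ directly, where $\varphi \colon G \to \Aut(N) = \Aut(T) \Wr S_k$ is the conjugation map (injective since $C_G(N)=1$). You instead manufacture the product structure combinatorially: the orbit partitions $\mathcal{P}_i$ of the subgroups $\prod_{j \ne i} T_j$ form a $G$-invariant Cartesian decomposition of $\Omega$, and you invoke the general embedding theorem for Cartesian decompositions to obtain $G \leqs \Sym(\Delta) \Wr S_k$ in product action, afterwards cutting $\Sym(\Delta)$ down to $\Aut(T)$ via the embedding $N_{\Sym(\Delta)}(\bar{T})/C_{\Sym(\Delta)}(\bar{T}) \hookrightarrow \Aut(T)$, with $C_G(N)=1$ killing the centraliser components. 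Both arguments hinge on the same two inputs --- the product structure $N_\alpha = \prod_i (N_\alpha \cap T_i)$ and $C_G(N)=1$ --- and both must confront the wrinkle you rightly flag, namely that $\Aut(T)$ does not act on $T/S$ in general: the paper resolves it because the components of $\varphi(g)$ for $g \in G_\alpha$ normalise $S^*$ (as $G_\alpha$ normalises $N_\alpha$), whereas you resolve it by keeping the honest permutation action inside $N_{\Sym(\Delta)}(\bar{T}) \Wr S_k$ and identifying that group with a subgroup of $\Aut(T)\Wr S_k$ only after factoring out the centralisers; these amount to the same precise reading of the lemma's (slightly abusive) conclusion. Your route buys conceptual economy and makes the role of each hypothesis transparent --- in particular, your observation that $C_{\Sym(\Delta)}(\bar{T}) \cong N_T(S)/S$ is nontrivial exactly when $S$ is not self-normalising is correct and illuminating (the paper itself exploits this dichotomy later, in Lemma \ref{lem:NG+unfaith}); the paper's route buys self-containedness and explicit action formulas, which it reuses when identifying stabilisers such as $H = G \cap X_\alpha$ in the proofs of Theorems \ref{t:prim2dash} and \ref{t:qprim2dash}. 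To make your version fully airtight you should supply a precise citation or proof of the embedding theorem for Cartesian decompositions (that is exactly the content the paper proves by hand), and record the routine verification that $C_{\Sym(\Delta^k)}(\bar{T}^k) = C_{\Sym(\Delta)}(\bar{T})^k$, which underpins your claim that the coordinate-wise kernels constitute $C_{\Sym(\Omega)}(N)$.
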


\begin{proof}
First observe that $G$ is isomorphic to a subgroup $L$ of $\Aut(N) = \Aut(T)\Wr S_k$ since $C_G(N)=1$.  
Since $N$ acts transitively on $\Omega$ we have $G=NG_\alpha$. Let $\Sigma$ be the set of 
right cosets of $N_\alpha$ in $N$.

Define an action of $G$ on $\Sigma$ by $(N_\alpha m)^{ng}=N_\alpha (mn)^g$ for each $m,n\in N$ and $g\in G_\alpha$.  To see that this is well-defined, first observe that if 
$N_\alpha m_1=N_\alpha m_2$ then $m_1m_2^{-1}\in N_\alpha$. Therefore, since $N_\alpha\norml G_\alpha$, we deduce that
$$(m_1n)^g((m_2n)^g)^{-1}= m_1^g(m_2^{-1})^g=(m_1m_2^{-1})^g\in N_\alpha$$ 
and thus $(N_\alpha m_1)^{ng}=(N_\alpha m_2)^{ng}$. In addition, if $n_1g_1=n_2g_2$ then  $n_2^{-1}n_1=g_2g_1^{-1}\in N_\alpha$ and it follows that $g_2^{-1}g_1\in N_\alpha$ since $N_\alpha\norml G_\alpha$.  A routine calculation now shows that $(N_\alpha m)^{n_1g_1}=(N_\alpha m) ^{n_2g_2}$ and so the action of $G$ on $\Sigma$ is well-defined. Since the stabiliser in $G$ of the trivial coset $N_\alpha \in \Sigma$ is $G_\alpha$, it follows that the action of $G$ on $\Omega $ is permutation isomorphic to the action of $G$ on $\Sigma$. 

Let $\varphi:G \to L$ be the isomorphism induced by the conjugation action of $G$ on $N$. 
Let $\Lambda$ be the set of right cosets of $S^*$ in $T^*$, where $T^* = {\rm Inn}(T)$ and $S^*$ is the group of automorphisms of $T$ induced by conjugation by elements of  $S$. Let $\rho:\Sigma \to \Lambda^k$ be the bijection sending $N_{\a}(t_1, \ldots, t_k)$ to $(S^*t^*_1, \ldots, S^*t^*_k)$, where $t_i^*$ is the inner automorphism of $T$ induced by conjugation by the element $t_i \in T$. Now $L$ acts on $\Lambda^k$ via the product action: if $x=ng \in G$, with $n \in N$ and $g \in G_{\a}$, then $\varphi(n) = (a_1, \ldots, a_k) \in {\rm Inn}(T)^k$ with 
$$(S^*t^*_1,\ldots,S^*t^*_k)^{\varphi(n)}=(S^*t^*_1a_1,\ldots,S^*t^*_ka_k)$$
and 
 $\varphi(g) = (b_1, \ldots, b_k)\pi \in {\rm Aut}(T) \Wr S_k$ with
$$(S^*t^*_1, \ldots, S^*t^*_k)^{\varphi(g)} = (S^*(t^*_{1^{\pi^{-1}}})^{b_{1^{\pi^{-1}}}}, \ldots, S^*(t^*_{k^{\pi^{-1}}})^{b_{k^{\pi^{-1}}}}).$$
One checks that $\rho(\omega^x) = \rho(\omega)^{\varphi(x)}$ for all $\omega \in \Sigma$ and all $x \in G$, hence the actions of $G$ and $L$ on $\Omega$ and $\Lambda^k$, respectively, are permutation isomorphic. Finally, by identifying $\Lambda^k$ with $\Delta^k$, where $\Delta$ is the set of right cosets of $S$ in $T$, we deduce that the permutation groups $G \leqs {\rm Sym}(\Omega)$ and $L \leqs {\rm Sym}(\Delta^k)$ are permutation isomorphic.
\end{proof}

We also need the following easy lemma (the proof of \cite[Theorem 4.1(e)]{CGJKKMN} goes through unchanged).

\begin{lemma}\label{lem:wr}
Let $L \leqs {\rm Sym}(\Delta)$ be a finite $2'$-elusive permutation group and let $K \leqs S_k$ be a transitive subgroup, where $k \geqs 2$. Then the product action of $L \Wr K$ on $\Delta^k$ is also $2'$-elusive.
\end{lemma}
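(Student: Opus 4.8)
The plan is to prove the contrapositive-free statement directly: I would show that \emph{no} element of $L\Wr K$ of odd prime order acts as a derangement on $\Delta^k$. The divisibility half of the definition comes for free, since $L$ being $2'$-elusive forces $|\Delta|$ to be divisible by an odd prime, whence so is $|\Delta|^k$. So I would fix an arbitrary element $x=(g_1,\ldots,g_k)\pi \in L\Wr K$ of odd prime order $r$, with each $g_i\in L$ and $\pi\in K$, and construct a fixed point $(\delta_1,\ldots,\delta_k)\in\Delta^k$.

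First I would record the fixed-point criterion for the product action. Using the convention of Lemma \ref{lemPA}, the element $x$ fixes $(\delta_1,\ldots,\delta_k)$ if and only if $\delta_j^{g_j}=\delta_{j^{\pi}}$ for every $j$. Reading this condition along a cycle $(j_1\,j_2\,\cdots\,j_m)$ of $\pi$, the coordinates $\delta_{j_2},\ldots,\delta_{j_m}$ are forced by $\delta_{j_1}$ via $\delta_{j_{i+1}}=\delta_{j_i}^{g_{j_i}}$, and the only genuine constraint that remains is that $\delta_{j_1}$ be fixed by the cycle product $h=g_{j_1}g_{j_2}\cdots g_{j_m}\in L$. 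Hence $x$ has a fixed point on $\Delta^k$ precisely when each cycle product of $\pi$ (formed from the $g_i$) fixes a point of $\Delta$.

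Next I would analyse the cycles via the standard order formula $\mathrm{ord}(x)=\lcm_c\big(m_c\cdot\mathrm{ord}(h_c)\big)$, where $c$ ranges over the cycles of $\pi$, $m_c$ is the cycle length and $h_c$ the corresponding cycle product. Since $\mathrm{ord}(x)=r$ is prime, each term $m_c\cdot\mathrm{ord}(h_c)$ divides $r$, which forces every cycle into one of two types: either $m_c=r$ with $h_c=1$, or $m_c=1$ (a fixed point $j$ of $\pi$) with $h_c=g_j$ of order $1$ or $r$. In the first type $h_c=1$ fixes every point of $\Delta$; in the second type $g_j$ is either trivial or an element of odd prime order $r$, and here I invoke the hypothesis that $L$ is $2'$-elusive to conclude that $g_j$ is not a derangement on $\Delta$, so it fixes some point. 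Choosing such a fixed point as the initial coordinate $\delta_{j_1}$ of each cycle and propagating around the cycle produces a global fixed point of $x$, so $x$ is not a derangement and the lemma follows.

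I do not anticipate a serious obstacle; the only points demanding care are the bookkeeping of the product-action convention (so that the fixed-point condition reads $\delta_j^{g_j}=\delta_{j^{\pi}}$ and the cycle product is assembled in the correct order) and the justification of the order formula for wreath-product elements. The conceptual heart of the argument is the observation that an element of odd prime order necessarily has cycle products that are trivial or of odd prime order, so that the \emph{weaker} $2'$-elusivity of $L$ already suffices in place of full elusivity; this is exactly why the proof of \cite[Theorem 4.1(e)]{CGJKKMN} transfers without change.
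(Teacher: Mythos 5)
Your proof is correct and is essentially the paper's own argument: the paper proves this lemma simply by remarking that the proof of \cite[Theorem 4.1(e)]{CGJKKMN} goes through unchanged, and what you have written out is precisely that argument, with the cycle-product analysis of an element $(g_1,\ldots,g_k)\pi$ of odd prime order $r$ (each cycle either has length $r$ with trivial product, or is a fixed point carrying a $g_j$ of order $1$ or $r$) and with $2'$-elusivity of $L$ substituted for elusivity at exactly the right spot. Your divisibility observation that $|\Delta|^k$ inherits an odd prime divisor from $|\Delta|$ is also the correct, if implicit, complement to the fixed-point argument.
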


\begin{lemma}\label{l:socle}
Let $N =T^k \leqs {\rm Sym}(\Omega)$ be a finite transitive permutation group with point stabiliser $H$, where $T$ is simple and $k \geqs 1$. Then $N$ is $2'$-elusive if and only if one of the following holds:
\begin{itemize}\addtolength{\itemsep}{0.2\baselineskip}
\item[{\rm (i)}] $T={\rm M}_{11}$ and $H=\PSL_2(11)^k$;
\item[{\rm (ii)}] $T={}^2F_4(2)'$ and   $H=\PSL_2(25)^k$;
\item[{\rm (iii)}] $T=\PSL_2(p)$ and $(C_p{:}C_r)^k\leqslant H <(C_p{:}C_{(p-1)/2})^k$, where $p$ is a Mersenne prime and $r$ is the product of the distinct prime divisors of 
$(p-1)/2$.
\end{itemize}
\end{lemma}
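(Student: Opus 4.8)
The plan is to prove the biconditional by analysing how $H$ meets the $k$ simple direct factors of $N$. For $1\leqslant i\leqslant k$ write $T_i$ for the $i$-th factor of $N=T^k$, let $\pi_i\colon N\to T$ be the corresponding projection, and set $D_i=H\cap T_i$, which I identify with a subgroup of $T$ via $\pi_i$. The starting observation is that conjugation in $N$ is coordinate-wise, so the $N$-class of an element $x=(x_1,\dots,x_k)$ of odd prime order $\ell$ is the ``box'' $\prod_i x_i^{T}$ (with $x_i^T=\{1\}$ when $x_i=1$). Assuming $N$ is $2'$-elusive and taking $x$ supported on a single coordinate $i$, this class meets $H$ exactly when $D_i$ meets the $T$-class of $x_i$; hence each $D_i$ meets every $T$-class of elements of odd prime order. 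Since $N$ is faithful and $T_i\trianglelefteq N$, no $D_i$ can equal $T$ (otherwise $T_i\leqslant H$ would lie in the kernel of the action), so each $D_i$ is a proper subgroup meeting every $T$-class of odd prime order. By Theorem \ref{thm:autT}(ii) this forces $T\in\{\mathrm{M}_{11},{}^2F_4(2)',\PSL_2(p)\}$ with $p$ a Mersenne prime, and pins $D_i$ down to $\PSL_2(11)$, $\PSL_2(25)$, or $C_p{:}C_{s_i}$ with $r\mid s_i\mid (p-1)/2$, respectively.

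Next I would squeeze $H$ between two explicit subgroups. The lower bound $\prod_i D_i\leqslant H$ is immediate, as each $D_i$ lies in $H$. For the upper bound, note that $D_i=H\cap T_i\trianglelefteq H$ (since $T_i\trianglelefteq N$), so applying $\pi_i$ gives $\pi_i(H)\leqslant N_T(D_i)$, whence $H\leqslant\prod_i N_T(D_i)$. The argument then splits according to $T$. For $T=\mathrm{M}_{11}$ or ${}^2F_4(2)'$ the subgroup $S=\PSL_2(11)$ or $\PSL_2(25)$ is maximal and non-normal, hence self-normalising; the two bounds coincide and force $H=\prod_i D_i$, a direct product of $k$ copies of $S$, which up to conjugacy in $N$ is $S^k$, giving (i) and (ii). For $T=\PSL_2(p)$ each $D_i=C_p{:}C_{s_i}$ is normal in the Borel $B=C_p{:}C_{(p-1)/2}$, and since $B$ is maximal while $D_i$ is not normal in the simple group $T$ we get $N_T(D_i)=B$; combined with $C_p{:}C_r\leqslant D_i$ this yields $(C_p{:}C_r)^k\leqslant H\leqslant B^k$, which is (iii) apart from the strictness of the upper inclusion.

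The strictness, and the whole converse, come from the degree condition. If $H=B^k$ then $|\Omega|=|T:B|^k=(p+1)^k=2^{mk}$ (writing $p=2^m-1$) is a $2$-power, contradicting $2'$-elusivity; hence $H<B^k$. Conversely, to see that each configuration is genuinely $2'$-elusive, I would verify the two defining conditions separately. For the non-derangement condition, in every case $H$ contains a direct power $\prod_i S_i$ with $S_i\in\{\PSL_2(11),\PSL_2(25),C_p{:}C_r\}$ meeting every $T$-class of odd prime order; by the box argument this power already meets every $N$-class of odd prime order, and hence so does $H$. For the degree, $|\Omega|=12^k$ in (i) and $2304^k$ in (ii), both divisible by $3$, while in (iii) one has $|\Omega|=2^{mk}\cdot|B^k:H|$ with $|B^k:H|$ a nontrivial divisor of the odd number $\big((p-1)/(2r)\big)^k$ (recall $(p-1)/2=2^{m-1}-1$ is odd), hence divisible by an odd prime.

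The main obstacle is the middle step: converting the coordinate-wise constraints on the $D_i$ into tight global control of $H$ through the sandwich $\prod_i D_i\leqslant H\leqslant\prod_i N_T(D_i)$, and then evaluating the normalisers correctly. The self-normalising property of $\PSL_2(11)$ and $\PSL_2(25)$ and the identification $N_T(C_p{:}C_s)=B$ for $\PSL_2(p)$ are the crucial inputs; once these are in place the classification falls out by squeezing, and the converse is a routine application of the box argument together with the index computations above.
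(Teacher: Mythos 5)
Your proposal is correct and takes essentially the same route as the paper: both proofs intersect $H$ with the simple direct factors, use single-coordinate elements to show each $D_i=H\cap T_i$ meets every $T$-class of odd prime order, invoke Theorem \ref{thm:autT}(ii) to pin down $(T,D_i)$, and then sandwich $H$ — the paper via $D_i \normeq \pi_i(H)<T_i$ plus the fact that proper overgroups of $C_p{:}C_r$ lie in the Borel, you via $\pi_i(H)\leqslant N_T(D_i)$ with the normalisers evaluated by maximality, which amounts to the same computation — with the strictness of the upper inclusion and the converse handled, as in the paper, by the $2$-power degree observation. The only point you skip is the possibility that $T$ is abelian, which the hypothesis ``$T$ simple'' allows and which Theorem \ref{thm:autT} (stated for nonabelian $T$) does not cover: for $T=C_q$ with $q$ odd your box argument already fails since $D_i=1$, but $T=C_2$ passes the class condition vacuously and must be excluded separately, as the paper does, by noting that $|\Omega|$ would then be a power of $2$.
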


\begin{proof}
By applying Theorem \ref{thm:autT}(ii), we deduce that $N$ is $2'$-elusive if (i), (ii) or (iii) holds. For the remainder, let us assume $N$ is $2'$-elusive. First observe that $H$ meets every $N$-class of elements of odd prime order. Since $H$ is core-free in $N$, it follows that $T$ is nonabelian (indeed, if $T$ is abelian then $H=1$ and $T=C_2$, which is incompatible with the fact that $|\Omega|$ is divisible by an odd prime). 

Write $N=T_1 \times \cdots \times T_k$ and $H_i = H \cap T_i$. Let $\pi_i:N \to T_i$ be the $i$-th projection map. If $C$ is a conjugacy class of $T$ then the corresponding subset of $T_i$ is a conjugacy class of $N$, so $H_i$ meets every $T_i$-class of elements of odd prime order. Since $H$ is core-free in $N$, $H_i$ is a proper subgroup of $T_i$ and thus $(T_i,H_i)$ is one of the cases arising in Theorem \ref{thm:autT}(ii). In particular, $T_i={\rm M}_{11}$, ${}^2F_4(2)'$ or ${\rm PSL}_{2}(p)$ with $p$ a Mersenne prime.

For each $i$ we have $H_i \normeq \pi_i(H) < T_i$ (note that $\pi_i(H)<T_i$ since $T_i$ is simple). If $T_i={\rm M}_{11}$ then $H_i = {\rm PSL}_{2}(11)$ is a maximal subgroup of $T_i$, so in this case $H_i = \pi_i(H) = {\rm PSL}_{2}(11)$ and thus $H={\rm PSL}_{2}(11)^k$ as in part (i) of the lemma. By the same argument, we deduce that $H = 
{\rm PSL}_{2}(25)^k$ if $T_i={}^2F_4(2)'$.  

Finally, let us assume $T_i = {\rm PSL}_{2}(p)$, where $p$ is a Mersenne prime. Here 
$$C_p{:}C_r \leqs H_i \leqs C_p{:}C_{(p-1)/2}$$ 
where $r$ is the product of the distinct prime divisors of $(p-1)/2$. Since any overgroup of $C_p{:}C_r$ in $\PSL_2(p)$ is contained in $C_p{:}C_{(p-1)/2}$, it follows that
$$C_p{:}C_r \leqs H_i\leqslant \pi_i(H) \leqs C_p{:}C_{(p-1)/2}.$$
Therefore $H$ is as given in part (iii), and we note that $H< (C_p{:}C_{(p-1)/2})^k$ since $|\Omega|$ is divisible by an odd prime.
\end{proof}

We are now in a position to prove Theorems \ref{t:prim2dash} and \ref{t:qprim2dash}.

\begin{proof}[Proof of Theorems \ref{t:prim2dash} and \ref{t:qprim2dash}]
Let $G \leqs {\rm Sym}(\Omega)$ be a finite $2'$-elusive quasiprimitive permutation group with socle $N$ and point stabiliser $H=G_{\a}$.  We claim that $N$ is the unique minimal normal subgroup of $G$. To see this, suppose that $N_1$  and $N_2$ are distinct minimal normal subgroups of $G$. Then $N_1$ and $N_2$ commute, so they are regular and nonabelian by \cite[Theorem 4.2A]{DM}. In particular, $N_1\cong T^k$ for some nonabelian simple group $T$ and positive integer $k$, so $N$ contains derangements of odd prime order, but this is incompatible with the fact that $G$ is $2'$-elusive. Therefore, $N$ is the unique minimal normal subgroup of $G$.

Write $N=T_1\times\cdots\times T_k$ for some positive integer $k$ such that $T_i\cong T$ for some simple group $T$. Since $G$ is quasiprimitive, it follows that $N$ is transitive and thus $2'$-elusive, so Lemma  \ref{l:socle} implies that one of the following holds (in particular, $N$ is nonabelian):
\begin{itemize}\addtolength{\itemsep}{0.2\baselineskip}
\item[{\rm (a)}] $T={\rm M}_{11}$ and $N_{\a}=\PSL_2(11)^k$;
\item[{\rm (b)}] $T={}^2F_4(2)'$ and $N_{\a}=\PSL_2(25)^k$;
\item[{\rm (c)}] $T=\PSL_2(p)$ and $(C_p{:}C_r)^k\leqslant N_{\a} <(C_p{:}C_{(p-1)/2})^k$, where $p$ is a Mersenne prime and $r$ is the product of the distinct prime divisors of 
$(p-1)/2$.
\end{itemize}

Since $N$ is the unique minimal normal subgroup of $G$, it follows that $C_G(N)=1$ and thus $G\leqslant \Aut(N)=\Aut(T)\Wr S_k$. Let $K \leqs S_k$ be the group induced by $G$ on the set of $k$ simple direct factors of $N$. Then 
$$T^k \leqs G \leqslant \Aut(T)\Wr K$$ 
and the minimality of $N$ implies that $K$ is transitive. 
Also note that $G=NH$, so $H$ also induces the group $K$ on the set of $k$ simple direct factors of $N$.

If (a) holds then $G = {\rm M}_{11} \Wr K$ is the only possibility (since $\Aut({\rm M}_{11})={\rm M}_{11}$), so $H = {\rm PSL}_{2}(11) \Wr K$. In view of Lemma \ref{lemPA}, we may identify $\Omega$ with $\Delta^k$, where $\Delta$ is the set of right cosets of ${\rm PSL}_{2}(11)$ in ${\rm M}_{11}$, so $G$ is a primitive product-type group as in Theorem \ref{t:prim2dash}(i). In addition, we note that any group of this form is primitive and elusive (and therefore $2'$-elusive since $|\Omega|$ is divisible by $3$).

Next assume (b) holds. Set $X=N_{\Sym(\Omega)}(N)$ and observe that $X={}^2F_4(2)\Wr S_k$ and $X_\alpha=(\PSL_2(25).2_3) \Wr S_k$. Since $G$ induces the transitive subgroup $K \leqs S_k$, it follows that $G\leqslant {}^2F_4(2)\Wr K \leqslant X$ and $H=G \cap X_\alpha$. By applying Lemma \ref{lemPA} we can identify $\Omega$ with $\Delta^k$, where $\Delta$ is the set of right cosets of $\PSL_2(25)$ in ${}^2F_4(2)'$, and we see that $G$ is a primitive product-type group as in Theorem \ref{t:prim2dash}(ii). By combining Theorem \ref{t:2dashas} and Lemma \ref{lem:wr}, we deduce that any primitive group of this form is indeed $2'$-elusive.

Finally, suppose that (c) holds. Let $\pi_i:N \to T_i$ be the $i$-th projection map.  For each $i\in\{1,\ldots,k\}$, set $R_i = \pi_i(N_\alpha)$, so 
$$C_p{:}C_r\leqslant R_i \leqslant C_p{:}C_{(p-1)/2}< T_i.$$
Since $H$ normalises $N_\alpha$ and acts transitively on the set of $k$ simple direct factors of $N$, it follows that $R_i\cong R_j$ for all $i,j$. Moreover, $H$ normalises the subgroup $R=R_1\times\cdots\times R_k$ of $N$. For each $i$, let $J_i=N_{T_i}({\rm O}_p(R_i)) = C_p{:}C_{(p-1)/2}$ and note that $H$ normalises the subgroup $J=J_1\times\cdots\times J_k$ of $N$. Moreover, $N_\alpha \leqslant R \leqs J \leqs N$ and $N_\alpha$ is a subdirect product of $R$. Also note that 
$N_\alpha\neq J$ since $|\Omega|$ is divisible by an odd prime. Therefore, $H< JH< G$ and thus $G$ preserves a nontrivial system of imprimitivity $\mathcal{P}$ of $\Omega$ such that the stabiliser of the block containing $\alpha$ is $JH$. Note that $JH \cap N=J$. The kernel of the action of $G$ on $\mathcal{P}$ is an intransitive normal subgroup of $G$, so this action is faithful by the quasiprimitivity of $G$. Finally, by applying Lemma \ref{lemPA} we can identify $\mathcal{P}$ with the Cartesian product $\Delta^k$, where $\Delta$ is the set of right cosets of $C_p{:}C_{(p-1)/2}$ in ${\rm PSL}_{2}(p)$.

\vs

This completes the proof of Theorems \ref{t:prim2dash} and \ref{t:qprim2dash}.
\end{proof}

\begin{remark}\label{r:qpex}
Let $G=\PSL_2(p)\Wr S_k$ and $H=(C_p{:}C_r) \Wr S_k$, where $p$ is a Mersenne prime and $r$ is the product of the distinct prime divisors of $(p-1)/2$. In addition, let us assume that $r \ne (p-1)/2$ (note that $p=2^7-1$ is the smallest Mersenne prime with this property). Then the action of $G$ on the set $\Omega = G/H$ of right cosets of $H$ is quasiprimitive. Moreover, Lemma \ref{lemPA} implies that the action of $G$ on $\Omega$ can be identified with the usual product action of $G$ on $\Delta^k$, where $\Delta$ is the set of right cosets of $C_p{:}C_r$ in $\PSL_2(p)$. Then by applying Theorem \ref{t:2dashas} and Lemma \ref{lem:wr}, we deduce that the action of $G$ on $\Omega$ is $2'$-elusive. This shows that the set-up described in Theorem \ref{t:qprim2dash} does give rise to genuine examples.
\end{remark}

\section{Biquasiprimitive groups}\label{s:bquasi}

In this section we turn our attention to biquasiprimitive permutation groups; our aim is to prove Theorem \ref{t:bqprim2dash}. Recall that a transitive permutation group $G \leqs {\rm Sym}(\Omega)$ is \emph{biquasiprimitive} if every nontrivial normal subgroup of $G$ has at most two orbits and there is some normal subgroup with two orbits $\Delta_1$ and $\Delta_2$. Fix such a normal subgroup and let $G^+$ denote the index-two subgroup of $G$ that fixes $\Delta_1$ and $\Delta_2$ setwise, so $\Omega = \Delta_1 \cup \Delta_2$ is a $G$-invariant partition of $\Omega$.

Recall from the introduction that the elusive biquasiprimitive groups have been determined by Giudici and Xu (see \cite[Theorem 1.4]{GX}).  Our goal is to extend this result to $2'$-elusive groups.

\begin{lemma}\label{lem:inGplus}
Let $G \leqs {\rm Sym}(\Omega)$ be a finite $2'$-elusive biquasiprimitive permutation group with point stabiliser $H$ and let $N$ be a minimal normal subgroup of $G$. Then $G^+ = NH$.
\end{lemma}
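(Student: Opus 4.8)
The plan is to pin down the orbit structure of $N$ and then to identify the setwise stabiliser of its orbits with $G^+$. First I would record some preliminary facts. Relabelling if necessary, assume $\a\in\Delta_1$; since $H=G_{\a}$ fixes $\a$ it fixes the block $\Delta_1$, so $H\leqs G^+$. As $G^+\normal G$ has index $2$ and fixes both $\Delta_1$ and $\Delta_2$ setwise, it has exactly two orbits, namely $\Delta_1$ and $\Delta_2$, and is transitive on each. I would also note that $G$ cannot be regular: otherwise $|\O|=|G|$ is divisible by an odd prime $q$ and any element of order $q$ is a derangement, contradicting $2'$-elusivity; hence $H\ne 1$.

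Next I would show that $N$ is intransitive. Suppose not. Since $N\leqs G^+$ would force $N$ to fix $\Delta_1$ setwise (so $N$ could not be transitive), we have $N\not\leqs G^+$, whence $NG^+=G$ and $N\cap G^+$ has index $2$ in $N$. Now $N=T^k$ for a simple group $T$. If $T$ is abelian then $N$ is regular, so $|\O|=|N|$ is a prime power; it is divisible by an odd prime by $2'$-elusivity, so $N$ contains a derangement of odd prime order, a contradiction. If $T$ is nonabelian then $N$ is perfect and so has no subgroup of index $2$, again a contradiction (alternatively, invoke Lemma \ref{l:socle}, by which a transitive $2'$-elusive $N$ is built from ${\rm M}_{11}$, ${}^2F_4(2)'$ or $\PSL_2(p)$, all of which are perfect). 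Therefore $N$ is intransitive, and by biquasiprimitivity it has exactly two orbits, say $\Sigma_1$ and $\Sigma_2$, which $G$ must interchange. Let $P$ be the index-two setwise stabiliser of $\Sigma_1$ and $\Sigma_2$, and relabel so that $\a\in\Sigma_1$; then $H\leqs P$ and $P_{\a}=P\cap H=H$. Since $N\leqs P$ is transitive on $\Sigma_1$, so is $P$, and thus $P=NP_{\a}=NH$.

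It remains to prove that $P=G^+$, and this is the step I expect to be the main obstacle, since a priori $G$ may possess several subgroups of index $2$. Here I would exploit biquasiprimitivity once more. Both $P$ and $G^+$ are normal of index $2$, so $D:=P\cap G^+$ is a normal subgroup of index dividing $4$; it contains $H\ne 1$, hence is nontrivial, and so has at most two orbits. On the other hand $D$ stabilises each of the four sets $\Delta_i\cap\Sigma_j$ setwise, so the number of nonempty sets among these is at most the number of $D$-orbits, namely at most two. Since $|\Delta_1|=|\Delta_2|=|\Sigma_1|=|\Sigma_2|=|\O|/2$, a short counting argument (a single nonempty intersection is too small to cover $\O$, and two nonempty intersections are forced to be ``diagonal'') shows that $\{\Delta_1,\Delta_2\}=\{\Sigma_1,\Sigma_2\}$. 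Hence the two partitions coincide, their setwise stabilisers agree, and therefore $G^+=P=NH$, as required.
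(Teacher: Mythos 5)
Your proof is correct, but it takes a genuinely different and longer route than the paper's. The paper splits on whether $N\leqs G^+$: if $N\leqs G^+$, then biquasiprimitivity forces the nontrivial normal subgroup $N$ to be transitive on each of the two $G^+$-orbits, and the Frattini-type argument $G^+=N(G^+)_{\a}=NH$ finishes at once; if $N\not\leqs G^+$, then $N\cap G^+\normeq G$ and minimality of $N$ give $N\cap G^+=1$, whence $|N|=2$, every $N$-orbit has size $2$, and biquasiprimitivity forces $|\Omega|\in\{2,4\}$, contradicting divisibility of $|\Omega|$ by an odd prime. You instead rule out transitivity of $N$ by structural facts (a transitive abelian group is regular, so the derangement clause of $2'$-elusivity applies; a nonabelian minimal normal subgroup is perfect, so cannot contain $N\cap G^+$ as an index-two subgroup), and then, having built $P=NH$ as the stabiliser of the two $N$-orbits, you must still reconcile $P$ with $G^+$. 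Your four-cell argument with $D=P\cap G^+$ does this correctly, and you rightly identified it as the delicate point: your preliminary observation that $H\neq 1$ is exactly what licenses applying biquasiprimitivity to $D$. The paper's decomposition avoids this reconciliation entirely, because in the case $N\leqs G^+$ the relevant orbits are the $G^+$-orbits $\Delta_1,\Delta_2$ from the outset. Note that your last step could also be shortened by the paper's device: once $N$ is intransitive, minimality gives $N\cap G^+\in\{1,N\}$, and $N\cap G^+=1$ collapses as above, so $N\leqs G^+$; then the two $N$-orbits lie inside $\Delta_1$ and $\Delta_2$ and must equal them, giving $P=G^+$ immediately. What your route buys is a self-contained argument that makes the dichotomy transitive/intransitive (rather than $N\leqs G^+$ or not) the organising principle, together with the slightly sharper conclusion, obtained en route, that the $N$-orbits are exactly $\Delta_1$ and $\Delta_2$; the cost is the extra partition-identification step that the paper never has to confront.
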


\begin{proof}
If $N \leqslant G^+$ then the biquasiprimitivity of $G$ implies that $N$ acts transitively on each $G^+$-orbit and thus $G^+ = NH$. Seeking a contradiction, suppose that $N\not\leqslant G^+$. Then by the minimality of $N$ we have $N\cap G^+=1$. Since $|G:G^+|=2$ it follows that $|N|=2$ and $G=G^+\times C_2$. Each orbit of $N$ has size $2$ and thus $|\Omega| \in \{2,4\}$ since $G$ is biquasiprimitive. But this contradicts the fact that $|\Omega|$ is divisible by an odd prime (because $G$ is $2'$-elusive). The result follows. 
\end{proof}

We now partition the proof of Theorem \ref{t:bqprim2dash} into two parts, according to whether or not $G^{+}$ acts faithfully on its orbits $\Delta_1$ and $\Delta_2$.

\subsection{$G^{+}$ acts faithfully on both orbits}\label{ss:faith}

\begin{lemma}\label{lem:NGplusfaith}
Let $G\leqs {\rm Sym}(\Omega)$ be a finite $2'$-elusive biquasiprimitive permutation group with point stabiliser $H=G_{\a}$ and suppose that $G^+$ acts faithfully on its two orbits. Then $G$ has a unique minimal normal subgroup $N =T^k$, where $k \geqs 1$ and $T, N_{\a}$ are one of the following:
\begin{itemize}\addtolength{\itemsep}{0.2\baselineskip}
\item[{\rm (i)}] $k=1$, $T=A_6$ and $N_{\alpha}=A_5$;
\item[{\rm (ii)}] $T={\rm M}_{11}$ and $N_{\alpha}=\PSL_2(11)^k$;
\item[{\rm (iii)}] $T={}^2F_4(2)'$ and $N_{\alpha}=\PSL_2(25)^k$;
\item[{\rm (iv)}] $T=\PSL_2(p)$ and $(C_p{:}C_r)^k\leqslant N_{\alpha} <(C_p{:}C_{(p-1)/2})^k$ where $p$ is a Mersenne prime and $r$ is the product of the distinct prime divisors of $(p-1)/2$.
\end{itemize}
\end{lemma}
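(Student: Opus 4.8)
The plan is to establish the statement in three stages: first that $N$ is the unique minimal normal subgroup of $G$, then that $(T,H_1)$ (with $H_1=N_\alpha\cap T_1$) must be an entry of Table \ref{tab:alloddautT}, and finally to recover $N_\alpha$ and eliminate the unwanted cases. Throughout I write $N=T_1\times\cdots\times T_k$ with $T_i\cong T$, and $H_j=N_\alpha\cap T_j$.

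For uniqueness I would argue by contradiction. If $N_1\ne N_2$ are distinct minimal normal subgroups, then by Lemma \ref{lem:inGplus} both lie in $G^+$ and each is transitive on $\Delta_1=\alpha^{G^+}$ (as $G^+=N_iG_\alpha$). Since $N_1\cap N_2=1$ they centralise one another, so their images in $\Sym(\Delta_1)$ are commuting transitive groups and hence both regular on $\Delta_1$; because $G^+$, and so each $N_i$, acts \emph{faithfully} on $\Delta_1$, this regularity forces $(N_i)_\alpha=1$, and the same analysis on $\Delta_2$ shows $N_i$ is semiregular on $\Omega$. If $T$ is nonabelian then any element of $N_i$ of odd prime order is a derangement, contradicting $2'$-elusivity; and if $N_i=C_2^k$ then $|\Omega|=2^{k+1}$ is not divisible by an odd prime, again a contradiction. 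Hence $N$ is unique, $C_G(N)=1$, $G\leqs\Aut(N)=\Aut(T)\Wr S_k$, and the induced group $K\leqs S_k$ on the $k$ factors is transitive.

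The key reduction is that $H_1$ meets every $\Aut(T)$-class of elements of odd prime order. Given such an $x\in T_1$, the element $\hat x=(x,1,\dots,1)\in N$ fixes some point of $\Omega$ by $2'$-elusivity, so $\hat x$ is $G$-conjugate into $H\cap N=N_\alpha$; this conjugate is supported on a single factor $T_j$, with component an $\Aut(T)$-conjugate of $x$ lying in $H_j$, so the $\Aut(T)$-class of $x$ meets $H_j$. Since $H=G_\alpha$ normalises $N_\alpha$ and permutes the factors transitively, the $H_j$ are pairwise $\Aut(T)$-conjugate, and therefore the $\Aut(T)$-class of $x$ in fact meets $H_1$. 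As $N_\alpha$ is core-free, $H_1<T$ is proper, so Theorem \ref{thm:autT}(i) shows $(T,H_1)$ is one of the entries of Table \ref{tab:alloddautT}. I would then discard $T=\Omega^+_8(2)$ and $T=\POmega^+_8(3)$: here $H_1$ meets every $\Aut(T)$-class but not every $T$-class (Theorem \ref{thm:autT}(ii)), so realising the fusion above forces $G$ to induce a graph (triality) automorphism of order $3$; exactly as in the intransitive $\Omega^+_8$ analysis in the proof of Theorem \ref{t:2dashas}, this produces an element of order $3$ in $G$ lying in no point stabiliser, contradicting $2'$-elusivity.

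It remains to recover $N_\alpha$ from $H_1$. For $T={\rm M}_{11}$, ${}^2F_4(2)'$ and $A_6$ the subgroup $H_1$ (respectively $\PSL_2(11)$, $\PSL_2(25)$, $A_5$) is maximal and self-normalising in $T$, so the subdirect-product argument of Lemma \ref{l:socle} gives $H_j\trianglelefteq\pi_j(N_\alpha)\leqs N_T(H_j)=H_j$, whence $\pi_j(N_\alpha)=H_j$ and $N_\alpha=\prod_jH_j$; for $T=\PSL_2(p)$ the same argument together with the fact that every overgroup of $C_p{:}C_r$ lies in $C_p{:}C_{(p-1)/2}$ yields $(C_p{:}C_r)^k\leqs N_\alpha\leqs(C_p{:}C_{(p-1)/2})^k$, the upper inclusion being strict since $|\Delta_1|=|\Omega|/2$ is divisible by an odd prime. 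This produces cases (ii)--(iv). The main obstacle will be the remaining case $T=A_6$, where I must show $k=1$. Here $A_6$ has two classes of elements of order $3$, only one of which meets $A_5$, so $2'$-elusivity can only be rescued by the swap of $\Delta_1$ and $\Delta_2$ realised by an element of $G\setminus G^+$. Assuming $k\geqs2$, I would compare the element of $N$ having every coordinate in the order-$3$ class missed by $A_5$ with the element having a single such coordinate: the former can fix a point only if some $g\in G\setminus G^+$ induces the (order $2$) class-fusing outer automorphism on all $k$ factors, whereas the latter forces it on exactly one factor and trivially on the rest. Reconciling these two requirements against the faithfulness hypothesis and the precise structure of $\Out(A_6)$ is the crux of the argument; it is what pins the $A_6$ case to $k=1$ and so yields case (i), completing the proof.
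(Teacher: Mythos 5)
Your overall skeleton matches the paper's proof (uniqueness of $N$ via commuting minimal normal subgroups and semiregularity, then Theorem \ref{thm:autT} applied to the intersections $N_\alpha\cap T_j$, then reconstruction of $N_\alpha$), but there is a genuine gap at the step you call the ``key reduction''. You assert that $H=G_\alpha$ permutes the $k$ simple direct factors of $N$ transitively, and use this to conclude that the subgroups $H_j=N_\alpha\cap T_j$ are pairwise $\Aut(T)$-conjugate. This is false in general: since $N$ is transitive on $\Delta_1$ we have $G^{+}=NH$, so $H$ induces only $K^{+}$ on the factors, and $K^{+}$ may have two equal-sized orbits even in the faithful case. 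A concrete example from the paper's own Theorem \ref{thm:faith}(ii): $G={\rm M}_{11}\Wr C_2$ with $G^{+}=N={\rm M}_{11}^2$ and $H=N_\alpha=\PSL_2(11)^2$ is $2'$-elusive and biquasiprimitive with $G^{+}$ faithful on both orbits, yet $K^{+}=1$ is intransitive. With only your $(x,1,\dots,1)$ argument, each $\Aut(T)$-class of odd prime order is known to meet $H_j$ for \emph{some} $j$ depending on the class, and the $H_j$ are conjugate only within each $K^{+}$-orbit; a priori $N_\alpha$ could meet all the factors in one $K^{+}$-orbit trivially (e.g.\ $N_\alpha$ resembling $R^{k/2}\times 1^{k/2}$), and nothing in your argument excludes this. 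The paper closes exactly this hole with an extra device: since the partition of the factors into the two $K^{+}$-orbits is $G$-invariant, every $G$-conjugate of $g=(t,1,\dots,1,t)$ (one nontrivial coordinate in each orbit) retains that shape; conjugating $g$ into $N_\alpha$ and multiplying off the known first-half component by an element of $R_1\times\cdots\times R_{k/2}\leqslant N_\alpha$ yields a nontrivial element of $N_\alpha\cap T_i$ for $i$ in the second orbit, after which the Theorem \ref{thm:autT} analysis applies to those factors too. Without this (or an equivalent) step, your identification of $N_\alpha$ in cases (ii)--(iv) is unjustified.

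Two smaller points. First, you explicitly leave the elimination of $T=A_6$ with $k\geqslant 2$ unfinished, calling it ``the crux''; your sketch (comparing the all-coordinates element with the single-coordinate element) does not obviously close, since different conjugating elements of $G$ may realise the two fusion requirements separately, and there is no per-coordinate coherence constraint in $\Aut(T)\Wr S_k$ to play them against each other. The paper avoids this entirely: it first pins down $N_\alpha\cong A_5^k$ (using the two-orbit argument above) and then invokes the construction in the proof of \cite[Proposition 4.6]{GX}, which produces an explicit derangement of order $3$ in $N$ by mixing the two $T$-classes of $3$-elements across coordinates. Second, for $T=\Omega_8^{+}(2)$ and $\POmega_8^{+}(3)$ the paper likewise cites that same proof to produce a derangement of order $5$ in $N$, rather than your triality sketch; your adaptation of the intransitive almost simple analysis from Theorem \ref{t:2dashas} to the product setting is not worked out and, as stated, does not explain why an order-$3$ element of $G$ inducing triality on the factors must be a derangement on $\Omega$.
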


\begin{proof}
Let $N$ be a minimal normal subgroup of $G$. By Lemma \ref{lem:inGplus}, $N\leqslant G^+$ and since $G$ is biquasiprimitive, $N$ acts transitively on both $\Delta_1$ and $\Delta_2$. Moreover, since $G$ is transitive and we are assuming that $G^+$ acts faithfully on $\Delta_1$ and $\Delta_2$, it follows that $N^{\Delta_1}\cong N\cong N^{\Delta_2}$.

We claim that $N$ is the unique minimal normal subgroup of $G$. To see this, suppose that $M$ is another minimal normal subgroup of $G$, so $N^{\Delta_1}$ and $M^{\Delta_1}$ are both transitive normal subgroups of $(G^+)^{\Delta_1}$. Since $N^{\Delta_1} \cap M^{\Delta_1}=1$ it follows that $[N^{\Delta_1}, M^{\Delta_1}]=1$, so \cite[Theorem 4.2A]{DM} implies that $N^{\Delta_1}$ and $M^{\Delta_1}$ are regular on $\Delta_1$ and $N\cong M\cong T^k$ for some finite nonabelian simple group $T$ and positive integer $k$. Similarly, $N$ and $M$ act faithfully and regularly on $\Delta_2$ and thus every element of odd prime order in $N$ is a derangement on $\Omega$. This is a contradiction since $G$ is $2'$-elusive, so $N$ is the unique minimal normal subgroup of $G$ as claimed. Write $N=T^k$, where $T$ is simple and $k \geqs 1$. 

If $N$ is abelian then it is semiregular on $\Omega$ with two orbits, so $T=C_2$ is the only possibility since $G$ is $2'$-elusive. But this implies that $|\Omega|=2^{k+1}$, which is a contradiction since $|\Omega|$ is divisible by an odd prime. Therefore, $T$ is nonabelian. Write $N=T_1\times \cdots \times T_k$ with $T_i\cong T$, and let $\pi_i:N \to T_i$ be the $i$-th projection map. Note that $G \leqslant {\rm Aut}(N) = {\rm Aut}(T) \Wr S_k$.

Fix $\alpha\in\Delta_1$ and set $H=G_{\a}$. Now each $n \in N$ of odd prime order fixes an element of $\Omega$ and is therefore $G$-conjugate to an element of $N_\alpha$. Thus every $\Aut(N)$-class of elements of odd prime order in $N$ meets $N_\alpha$. Let $t\in T$ be an element of odd prime order. Then $(t,\ldots,t) \in N$ is $\Aut(N)$-conjugate to an element of $N_{\alpha}$, so for each $i\in\{1,\ldots,k\}$ we see that $\pi_i(N_{\alpha})$ meets every $\Aut(T_i)$-class of elements of odd prime order in $T_i$. Hence either $\pi_i(N_{\alpha})=T_i$, or $(T_i,\pi_i(N_\alpha))$ is given by Theorem \ref{thm:autT}(i). Similarly, $(t,1,\ldots,1) \in N$ is also $\Aut(N)$-conjugate to an element of $N_\alpha$, so 
\begin{equation}\label{e:nal}
1 \neq N_{\alpha} \cap T_{\ell} \normeq \pi_{\ell}(N_\alpha) \leqs T_{\ell}
\end{equation}
for some $\ell \in \{1,\ldots,k\}$. Since $N$ is faithful on $\Delta_1$ it follows that $T_{\ell} \not\leqs N_\alpha$ and so the fact that $T_{\ell}$ is simple implies that $\pi_{\ell}(N_\alpha)\neq T_{\ell}$. In particular, 
$(T_{\ell},\pi_{\ell}(N_\alpha))$ is one of the cases in Theorem \ref{thm:autT}(i). Set $R = \pi_{\ell}(N_\alpha)$.

The transitivity of $N$ on $\Delta_1$ implies that $G^+=NH$, so $G^+$ and $H$ have the same orbits on $\{T_1, \ldots, T_k\}$. The minimality of $N$ implies that $G$ acts transitively on this set, so $G^+$ is either transitive or has two equal sized orbits (since $|G:G^+|=2$). Let $\mathcal{O}_1$ be the orbit of $G^+$ on $\{T_1, \ldots, T_k\}$ containing $T_{\ell}$ (where $\ell$ is the integer in \eqref{e:nal}). Without loss of generality we may assume that 
$\{T_1,\ldots,T_{\lfloor k/2 \rfloor}\}\subseteq \mathcal{O}_1$. Note that $\pi_i(N_\alpha)\cong \pi_{\ell}(N_\alpha)$  and $N_\alpha \cap T_i\cong N_\alpha \cap T_{\ell}$ for all $i\in\mathcal{O}_1$. We now consider two cases.

\vs

\noindent \emph{Case 1.} $T \ne \PSL_2(p)$

\vs

Suppose first that $T\neq \PSL_2(p)$, in which case $\pi_{\ell}(N_\alpha)$ is simple (see Table \ref{tab:alloddautT}). In view of \eqref{e:nal}, it follows that $N_\alpha\cap T_{\ell}= \pi_{\ell}(N_\alpha) = R$. We claim that $N_\alpha \cong R^k$. This is clear if $\mathcal{O}_1=\{T_1,\ldots,T_k\}$, so let us assume that $k$ is even and 
$\mathcal{O}_1=\{T_1,\ldots,T_{k/2}\}$, so $R_1 \times \cdots \times R_{k/2} \leqslant N_\alpha$, where $R_i = \pi_i(N_{\a})\cong R$ and $R_i < T_i$ for each $i \in \{1, \ldots, k/2\}$. 

Let $t \in T$ be an element of odd prime order and set $g=(t,1,\ldots,1, t)\in N$. Since
$$\{T_1,\ldots,T_{k}\} = \{T_1,\ldots,T_{k/2}\} \cup \{T_{k/2+1},\ldots,T_{k}\}$$
is a $G$-invariant partition, it follows that every $G$-conjugate of $g$ has precisely one nontrivial entry in the first $k/2$ coordinates and precisely one nontrivial entry in the last $k/2$ coordinates. Since $G$ is $2'$-elusive, $g$ is conjugate to an element of $N_{\alpha}$. By multiplying this conjugate by an appropriate element of $R_1\times \cdots \times R_{k/2}$, we deduce that $N_\alpha$ contains an element with precisely one nontrivial entry, which occurs in the last $k/2$ coordinates. Hence $1\neq N_\alpha\cap T_i\normeq \pi_i(N_\alpha)$ for all $i\in\{k/2+1,\ldots,k\}$. By arguing as above we deduce that $\pi_i(N_\alpha)\cong R$ and thus $N_{\alpha}\cong R^k$. This justifies the claim.

We now consider the possibilities for $T$ arising in Theorem \ref{thm:autT}(i). If 
$T=\Omega_{8}^+(2)$ or $\POmega_{8}^+(3)$ then the proof of  \cite[Proposition 4.6]{GX}  produces a derangement of order $5$ in $N$. Similarly, if $T=A_6$ and $k\geqs 2$ then the same proof gives a derangement of order $3$ (if $k=1$ then case (i) holds). If $T={\rm M}_{11}$ or ${}^2F_4(2)'$ then we are in case (ii) or (iii), respectively.

\vs

\noindent \emph{Case 2.} $T = \PSL_2(p)$

\vs

Finally, let us assume that $T=\PSL_2(p)$, so $p$ is a Mersenne prime (see Table \ref{tab:alloddautT} and Remark \ref{r:thm}). We have seen that for each element $t\in T$ of odd prime order,  $(t,1,\ldots,1)$ is $G$-conjugate to an element of $N_{\alpha}$, and that the unique nontrivial entry of this element lies in $\mathcal{O}_1\subseteq \{T_1,\ldots,T_{k}\}$. Since $H$ acts transitively on the set $\{N_\alpha\cap T_i \mid T_i \in\mathcal{O}_1\}$, it follows that $N_\alpha \cap T_i$ meets each ${\rm Aut}(T_i)$-class of elements of odd prime order in $T_i$, for all $i\in \mathcal{O}_1$. This immediately implies that (iv) holds if $\mathcal{O}_1=\{T_1,\ldots,T_k\}$. 

To complete the proof, we may assume $k$ is even and $\mathcal{O}_1 = \{T_{1},\ldots, T_{k/2}\}$. The above argument shows that $Q_1\times\cdots\times Q_{k/2}\leqslant N_\alpha$, where $Q_i\cong C_p{:}C_r$ for all $i$ (here $r$ is the product of the distinct prime divisors of $(p-1)/2$). Moreover, any $n \in N_\alpha$ of odd prime order projects onto an element of $Q_i$ for each $i\in\{1,\ldots,k/2\}$. 

Let $t \in T$ be an element of odd prime order and set $g=(t,1,\ldots,1, t)\in N$. As observed above, each $G$-conjugate of $g$ has precisely one nontrivial entry in the first $k/2$ coordinates and one in the last $k/2$ coordinates. An appropriate $G$-conjugate of $g$ is contained in $N_\alpha$, which we can multiply by an element of $Q_1\times\cdots\times Q_{k/2} \leqs N_{\a}$ to obtain an element of odd prime order in $N_{\a}$ with precisely one nontrivial entry in the $i$-th coordinate for some $i\in\{k/2+1,\ldots,k\}$. Therefore $1\neq N_\alpha\cap T_i \normeq \pi_i(N_\alpha)$. Since $T_i$ is simple and $\pi_i(N_\alpha)$ meets every 
$\Aut(T_i)$-class of elements of odd prime order in $T_i$, it follows that $\pi_i(N_\alpha)\leqslant C_p{:}C_{(p-1)/2}$. Moreover, we also see that $N_\alpha\cap T_i$ meets every 
$\Aut(T_i)$-class of elements of odd prime order in $T_i$, so $C_p{:}C_r\leqslant N_\alpha\cap T_i$. Therefore,
$$C_p{:}C_r\leqslant N_\alpha\cap T_i \leqs \pi_i(N_\alpha)\leqslant C_p{:}C_{(p-1)/2}$$
for all $i\in\{k/2+1,\ldots,k\}$, and we conclude that (iv) holds.  
\end{proof}

\begin{theorem}\label{thm:faith}
Let $G \leqs {\rm Sym}(\Omega)$ be a finite $2'$-elusive biquasiprimitive permutation group with point stabiliser $H=G_{\a}$ and socle $N$. Let $K \leqs S_k$ be the transitive group induced by $G$ on the set of $k$ simple direct factors of $N$ and let $K^+ \leqs K$ be the group induced by $G^+$. Assume that $G^+$ acts faithfully on its two orbits. Then one of the following holds:
\begin{itemize}\addtolength{\itemsep}{0.2\baselineskip}
\item[{\rm (i)}] $(G,H) = ({\rm M}_{10},A_5)$ or $({\rm Aut}(A_6),S_5)$;
\item[{\rm (ii)}]  $G={\rm M}_{11}\Wr K$, $H=\PSL_2(11)\Wr K^+$ and $|K:K^{+}|=2$;
\item[{\rm (iii)}] $N=({}^2F_4(2)')^k\normeq G\leqslant {}^2F_4(2)\Wr K$ and $H=N_{G^+}(N_\alpha)$, where $N_{\a}=\PSL_2(25)^k$;
\item[{\rm (iv)}] $N=\PSL_2(p)^k\normeq G\leqslant\PGL_2(p)\Wr K$,  
$$(C_p{:}C_r)^k\leqslant N_{\alpha} <(C_p{:}C_{(p-1)/2})^k$$
and
$$H < N_{G^+}(N_\alpha) = G^+\cap ((C_p{:}C_{p-1}) \Wr K),$$
where $p$ is a Mersenne prime and $r$ is the product of the distinct prime divisors of $(p-1)/2$. 
\end{itemize}
Moreover, each group $G$ in (i), (ii) and (iii) is $2'$-elusive and biquasiprimitive.
\end{theorem}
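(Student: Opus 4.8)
The plan is to take Lemma~\ref{lem:NGplusfaith} as the starting point: it already supplies the unique minimal normal subgroup $N=T^k$ together with the four possibilities for $(T,N_\alpha)$, which become cases (i)--(iv) once the ambient structure of $G$ and $H$ is pinned down. Throughout I use that $C_G(N)=1$, so $G\leqslant\Aut(T)\Wr S_k$; write $K$ for the transitive image of $G$ in $S_k$ and $K^+$ for the image of $G^+$. The two facts I lean on repeatedly are $G^+=NH$ (Lemma~\ref{lem:inGplus}, via transitivity of $N$ on each $G^+$-orbit) and $H=G_\alpha\leqslant N_{G^+}(N_\alpha)$, the latter because $H$ fixes $\alpha\in\Delta_1$ (so $H\leqslant G^+$) and normalises $N\cap H=N_\alpha$.

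For the forward direction I argue case by case. When $k=1$ (so $T=A_6$, $N_\alpha=A_5$) the group $G$ is almost simple with socle $A_6$, and Theorem~\ref{t:2dashas} lists the $2'$-elusive possibilities; imposing biquasiprimitivity, i.e. that $\soc(G)=A_6$ has exactly two orbits, discards $(\Aut(A_6),A_5)$, where $A_6$ has four orbits on $24$ points, and leaves precisely $({\rm M}_{10},A_5)$ and $(\Aut(A_6),S_5)$, giving (i). The essential dichotomy for $k\geqslant2$ is whether $N_\alpha$ is self-normalising in $N$. In cases (ii) and (iii) the factor $N_\alpha\cap T_i$ is maximal and self-normalising in $T_i$ (namely $\PSL_2(11)<{\rm M}_{11}$ and $\PSL_2(25)<{}^2F_4(2)'$), so $N_N(N_\alpha)=N_\alpha$ and hence $\alpha$ is the \emph{unique} fixed point of $N_\alpha$ on $\Delta_1$; since $N_{G^+}(N_\alpha)$ permutes this one-point set it fixes $\alpha$, forcing $H=N_{G^+}(N_\alpha)$. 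Evaluating this normaliser inside $\Aut(T)\Wr S_k$ gives $H=\PSL_2(11)\Wr K^+$ with $|K:K^+|=2$ when $T={\rm M}_{11}$ (here $\Aut(T)=T$ forces $G={\rm M}_{11}\Wr K$), and $H=N_{G^+}(N_\alpha)$ with $N_\alpha=\PSL_2(25)^k$ when $T={}^2F_4(2)'$.

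Case (iv) is the one where the fixed-point count exceeds one. Here $N_\alpha\cap T_i=C_p{:}C_{s_i}$ with $C_p{:}C_r\leqslant C_p{:}C_{s_i}<C_p{:}C_{(p-1)/2}$, the strict inequality coming from $2'$-elusivity (the degree must carry an odd prime). Since $C_p{:}C_{s_i}\normeq C_p{:}C_{(p-1)/2}$ and the Borel $C_p{:}C_{(p-1)/2}=N_{\PSL_2(p)}(C_p)$ normalises the shared $p$-radical, one computes $N_N(N_\alpha)=(C_p{:}C_{(p-1)/2})^k=:J\supsetneq N_\alpha$; thus $N_\alpha$ has $|J:N_\alpha|>1$ fixed points on $\Delta_1$ and $H<N_{G^+}(N_\alpha)$ is strict. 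The same computation in $\PGL_2(p)$, where $N_{\PGL_2(p)}(C_p)=C_p{:}C_{p-1}$, identifies $N_{G^+}(N_\alpha)=G^+\cap((C_p{:}C_{p-1})\Wr K)$, giving (iv); this reproduces, with the index-two bookkeeping of $G^+$, the block-system analysis already used for Theorem~\ref{t:qprim2dash}. For the ``Moreover'' (converse) in (i)--(iii), biquasiprimitivity is quick: with $K$ transitive and $T$ nonabelian simple, $N=T^k$ is the unique minimal normal subgroup, so every nontrivial normal subgroup contains $N$, and $N$ has exactly $|G:NH|=|G:G^+|=2$ orbits; hence all nontrivial normal subgroups have at most two orbits with $N$ achieving two. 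For $2'$-elusivity note that $3$ is the only odd prime dividing $|\Omega|$ (the two-orbit degree being twice a product-type degree, cf. Remark~\ref{r:prim}(a)); any element of odd order lies in $G^+$ since it cannot interchange the two blocks, and by Lemma~\ref{lemPA} the $G^+$-action on each orbit is a product action of a group between $\soc(L)\Wr K^+$ and $L\Wr K^+$ with $L$ one of the $2'$-elusive almost simple groups of Theorem~\ref{t:2dashas}, so Lemma~\ref{lem:wr} shows the element fixes a point. In case (i) one reads the $2'$-elusivity straight off Table~\ref{tab:2dashelusive}.

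The main obstacle I anticipate is the normaliser-and-fixed-point bookkeeping separating (ii)/(iii) from (iv): proving $N_N(N_\alpha)=N_\alpha$ versus $N_N(N_\alpha)=J\supsetneq N_\alpha$, and correctly evaluating $N_{G^+}(N_\alpha)$ as an intersection inside $\Aut(T)\Wr K$ while tracking which outer and coordinate-permuting elements actually lie in the index-two subgroup $G^+$. The subtlety is that $N_\alpha$ may be a proper subdirect product across the $K^+$-orbits on the coordinates, so one must verify that its normaliser is nonetheless computed coordinatewise, through its $p$-radical in case (iv) and through self-normality of the maximal factors in cases (ii) and (iii).
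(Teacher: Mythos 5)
Your proposal is correct and follows essentially the same route as the paper: you run the case analysis supplied by Lemma~\ref{lem:NGplusfaith} through the ambient bound $G\leqslant \Aut(T)\Wr S_k$, using $G^+=NH$ (Lemma~\ref{lem:inGplus}) and $H\leqslant N_{G^+}(N_\alpha)$, and you verify the ``Moreover'' claims for (i)--(iii) exactly as the paper does, via the unique minimal normal subgroup for biquasiprimitivity and via Lemma~\ref{lemPA}, \cite[Theorem 4.1(e)]{CGJKKMN} and Lemma~\ref{lem:wr} for $2'$-elusivity. The only local differences are sound substitutes for the paper's steps: in the $A_6$ case you filter Table~\ref{tab:2dashelusive} by biquasiprimitivity where the paper argues that fusing the two classes of order-$3$ elements forces an outer automorphism of $S_6$; in (ii)--(iii) you obtain $H=N_{G^+}(N_\alpha)$ from self-normality of $N_\alpha$ in $N$ (so $\alpha$ is the unique fixed point) where the paper reads it off the product-action identification of Lemma~\ref{lemPA}; and in (iv) you get the strict containment $H<N_{G^+}(N_\alpha)$ from the $|J:N_\alpha|>1$ fixed points of $N_\alpha$, where the paper instead notes that $|G^+:N_{G^+}(N_\alpha)|$ is a power of $2$ while $|\Omega|$ has an odd prime divisor.
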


\begin{proof}
By Lemma \ref{lem:NGplusfaith}, $N$ is the unique minimal normal subgroup of $G$ and we may write $N=T^k$ with $k \geqs 1$, where the possibilities for $T$ and $N_{\a}$ are described in the lemma. Note that $G\leqslant \Aut(N) = \Aut(T) \Wr S_k$.

First assume that $T=A_6$ and $N_{\alpha}=A_5$, so $G\leqslant \Aut(A_6)$. Since $G$ is $2'$-elusive, each element in $N$ of odd prime order is $G$-conjugate to an element of $N_{\alpha}$, so $G$ must contain an outer automorphism of $S_6$. Therefore $G={\rm M}_{10}$ or $\Aut(A_6)$, and $N$ has two orbits of size $6$. It follows that $(G,H) = ({\rm M}_{10},A_5)$ or $({\rm Aut}(A_6),S_5)$, as in (i). It is easy to check that $G$ is indeed $2'$-elusive and biquasiprimitive in both cases. 

Next assume that $T={\rm M}_{11}$ and $N_{\alpha}=\PSL_2(11)^k$. 
Since $\Aut(T)=T$ it follows that $G=T \Wr K$ for some transitive subgroup $K \leqs S_k$ (the transitivity of $K$ follows from the minimality of $N$). Moreover, $G^+=T \Wr K^+$ and $H=\PSL_2(11) \Wr K^+$, where $|K:K^+|=2$. This is case (ii) in the statement of the theorem. 

We claim that every group $G$ as in (ii) is $2'$-elusive and biquasiprimitive. First observe that $N=T^k$ is the unique minimal normal subgroup of $G$ and $N$ has two orbits on $\Omega$. Therefore, if $M$ is any nontrivial normal subgroup of $G$ then $N \leqs M$, so $M$ has at most two orbits on $\Omega$ and thus $G$ is biquasiprimitive. To see that $G$ is $2'$-elusive, let $\Delta$ denote the set of right cosets of ${\rm PSL}_{2}(11)$ in $T$. Since $N_{\alpha}=\PSL_{2}(11)^k$ and $G^+=T \Wr K^+$ acts transitively and faithfully on $\Delta_1$, we may identify $\Delta_1$ with the Cartesian product $\Delta^k$ so that $G^+$ acts on $\Delta^k$ with its standard product action (see Lemma \ref{lemPA}). The action of $T$ on $\Delta$ is elusive, so \cite[Theorem 4.1(e)]{CGJKKMN} implies that the action of $G^+$ on $\Delta_1$ is also elusive. Therefore, each $g \in G^+$ of prime order has fixed points on $\Delta_1$, and hence on $\Omega$. Since every element in $G$ of odd prime order lies in $G^+$, we deduce that $G$ is indeed $2'$-elusive.

Next suppose that $T={}^2F_4(2)'$ and $N_{\alpha}=\PSL_2(25)^k$. The minimality of $N$ implies that $G$ induces a transitive group $K \leqs S_k$ on the set of $k$ simple direct factors of $N$, so $N\normeq G\leqslant {}^2F_4(2)\Wr K$ (note that $\Aut({}^2F_4(2)')={}^2F_4(2) = {}^2F_4(2)'.2$) and $G/N$ is a subgroup of $C_2\Wr K$ that projects onto $K$. Note that $H \leqslant G^+$ and $G^+/N$ is an index-two subgroup of $G/N$. Since $G^+$ acts transitively and faithfully on $\Delta_1$, by Lemma \ref{lemPA} we may identify $\Delta_1$ with $\Delta^k$, where $\Delta$ is the set of right cosets of $\PSL_2(25)$ in $T$, so that $G^+$ acts on $\Delta^k$ via the usual product action. In particular, $H=N_{G^+}(N_\alpha)$ as in part (iii). By arguing as above, we see that every group $G$ as in (iii) is biquasiprimitive. Also note that the action of $T$ on $\Delta$ is $2'$-elusive (see Theorem \ref{t:prim2dash}), so Lemma \ref{lem:wr} implies that the action of $G^+$ on $\Delta_1$ is also $2'$-elusive and we conclude that $G$ is $2'$-elusive as above.

Finally, let us assume that $T=\PSL_2(p)$ and 
$$(C_p{:}C_r)^k\leqslant N_{\alpha} <(C_p{:}C_{(p-1)/2})^k$$ 
where $p$ is a Mersenne prime and $r$ is the product of the distinct prime divisors of $(p-1)/2$.
Here $N\normeq G\leqslant \PGL_2(p)\Wr K$  for some transitive subgroup $K \leqs S_k$, as in case (iv), and $G/N$ is a subgroup of $C_2\Wr K$ that projects onto $K$. In addition, we note that $H<N_{G^+}(N_\alpha)$ since $N_\alpha \normeq H$ and 
$|G^+:N_{G^+}(N_\alpha)|$ is a power of $2$.
\end{proof}

\begin{remark}
Let $G$ be a group as in case (iii) of Theorem \ref{thm:faith}. In general, there is more than one possibility for $H = G_{\a}$ with the desired property that the action of $G$ on $G/H$ is $2'$-elusive and biquasiprimitive. For example, if $G = \langle ({}^2F_4(2)')^k, (g,\ldots,g)\rangle {:} S_k$, where ${}^2F_4(2)=\langle {}^2F_4(2)',g \rangle$ and $k \geqs 2$, then we can take $H=N_{G^+}(N_\alpha)$ with 
$$G^{+} = \mbox{$\langle ({}^2F_4(2)')^k, (g,\ldots,g)\rangle {:}A_k$ or ${}^2F_4(2)' \Wr S_k$}$$
and $N_{\a} = {\rm PSL}_{2}(25)^k$.
\end{remark}

\begin{remark}\label{r:exd}
Examples do occur in case (iv) of Theorem \ref{thm:faith}. To see this, fix a Mersenne prime $p$ such that $r < (p-1)/2$. Then the almost simple group $G=\PGL_2(p)$ with $H=C_p{:}C_{r}$ as in Theorem \ref{t:2dashas} is biquasiprimitive and $2'$-elusive (note that $G^+=\PSL_2(p)$). Similarly, if $k \geqs 2$ then we can take $G=\PSL_2(p)\Wr S_k$ and $H=(C_p{:}C_r)\Wr A_k$ (here $G^+=\PSL_2(p)\Wr A_k$). However, it is important to note that not all of the groups arising in (iv) are both biquasiprimitive and $2'$-elusive. For instance, we highlight the following examples: 
\begin{enumerate}
\item[(a)] $G=\PSL_2(p) \Wr S_3$ with $H= (C_p{:}C_r)^3$ is neither $2'$-elusive (it has derangements of order three) nor biquasiprimitive ($N=\PSL_2(p)^3$ has 6 orbits on $\Omega$),
\item[(b)] $G=\PSL_2(p)\Wr C_4$ with $H= (C_p{:}C_r)^4$ is $2'$-elusive but not biquasiprimitive. 
\end{enumerate}  
\end{remark}

\subsection{$G^{+}$ is not faithful on both orbits}\label{ss:notfaith}

To complete the proof of Theorem \ref{t:bqprim2dash}, we may assume that $G^+$ is not faithful on at least one of its two orbits $\Delta_1$ and $\Delta_2$ on $\Omega$. We begin with a lemma that describes the structure of ${\rm soc}(G)$ and ${\rm soc}(G)_{\a}$.

\begin{lemma}\label{lem:NG+unfaith}
Let $G \leqs {\rm Sym}(\Omega)$ be a finite $2'$-elusive biquasiprimitive permutation group with point stabiliser $H=G_{\a}$ and assume that $G^+$ is not faithful on at least one of its orbits. Then $G$ has a unique minimal normal subgroup $N=T^k$, where $k \geqs 2$ is even and $T, N_{\a}$ are one of the following: 
\begin{itemize}\addtolength{\itemsep}{0.2\baselineskip}
\item[{\rm (i)}] $T={\rm M}_{11}$ and $N_\alpha = \PSL_2(11)^{k/2} \times {\rm M}_{11}^{k/2}$; 
\item[{\rm (ii)}] $T={}^2F_4(2)'$ and $N_\alpha = \PSL_2(25)^{k/2}\times ({}^2F_4(2)')^{k/2}$;
\item[{\rm (iii)}] $T=\PSL_{2}(p)$ and 
$$(C_p{:}C_r)^{k/2}\times \PSL_2(p)^{k/2} \leqslant N_\alpha < (C_p{:}C_{(p-1)/2})^{k/2}\times \PSL_2(p)^{k/2},$$
where $p$ is a Mersenne prime and $r$ is the product of the distinct prime divisors of $(p-1)/2$.
\end{itemize}
\end{lemma}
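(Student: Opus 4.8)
The plan is to mirror the faithful case treated in Lemma~\ref{lem:NGplusfaith}, but now to exploit the kernels of $G^+$ on its two orbits. First I would take a minimal normal subgroup $N$ of $G$. By Lemma~\ref{lem:inGplus} we have $N\leqs G^+$, so $\Delta_1,\Delta_2$ are $N$-invariant and, as $G$ is biquasiprimitive, $N$ is transitive on each of $\Delta_1,\Delta_2$. The first claim to establish is that $N$ is the unique minimal normal subgroup and is nonabelian. Indeed, two distinct minimal normal subgroups $N,M$ would commute and both be transitive on $\Delta_1$ (and on $\Delta_2$), hence both \emph{regular} on each orbit by \cite[Theorem 4.2A]{DM}; then every nontrivial element of either is fixed-point-free on all of $\Omega$, and taking one of odd prime order gives a contradiction with $2'$-elusivity (the abelian alternative, a factor $C_2$, would make $|\Omega|=2|\Delta_1|$ a $2$-power, excluded). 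The same regularity argument shows $N$ itself cannot be abelian, since it is transitive on each orbit. Thus $N=T^k$ with $T$ nonabelian simple, and consequently $C_G(N)=1$.

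Next I would analyse the kernels. Set $M_i=\ker(G^+\to\Sym(\Delta_i))$; we are assuming $M_1\neq 1$, and for $g\in G\setminus G^+$ we get $M_1^{\,g}=M_2$ and $M_1\cap M_2=1$ (as $G^+$ is faithful on $\Omega$), so $M_2\neq 1$ as well. Since $N\cap M_1\normal G^+$, if it were trivial then $[N,M_1]=1$ would force $M_1\leqs C_G(N)=1$, a contradiction; hence $N\cap M_1\neq1$, and being a normal subgroup of $N=T_1\times\cdots\times T_k$ it is a nonempty subproduct $\prod_{i\in I}T_i$. Likewise $N\cap M_2=\prod_{i\in I'}T_i$ with $I'=I^{\,g}$ and $I\cap I'=\varnothing$. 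Now let $K^+\leqs K$ be the groups induced on the $k$ factors by $G^+\leqs G$. Then $K^+$ preserves both $I$ and $I'$, while the image of $g$ swaps them. If $K^+$ were transitive, no nonempty proper $K^+$-invariant set $I$ could exist; so $K^+$ is intransitive, $|K:K^+|=2$, and (being normal of index two in the transitive group $K$) $K^+$ has exactly two orbits of size $k/2$. Hence $k$ is even and, after relabelling, $I=\{1,\dots,k/2\}$ and $I'=\{k/2+1,\dots,k\}$. In particular $T_1,\dots,T_{k/2}$ act trivially on $\Delta_1$, so they lie in $N_\alpha$ for $\alpha\in\Delta_1$; writing $L=T_{k/2+1}\times\cdots\times T_k$ we obtain $N_\alpha=(N\cap M_1)\times S_\alpha$, where $S_\alpha=N_\alpha\cap L$ is the stabiliser of $\alpha$ in the faithful transitive action of $L$ on $\Delta_1$.

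It then remains to identify $S_\alpha$, and this is the crux of the argument. The element $g$ gives a permutation isomorphism between the action of $L$ on $\Delta_1$ and that of $L^{\,g}=T_1\times\cdots\times T_{k/2}$ on $\Delta_2$; call this common action $\mathcal{A}$, and note that $|\Delta_1|$ is divisible by an odd prime (otherwise $|\Omega|=2|\Delta_1|$ is a $2$-power). I claim $\mathcal{A}$ is $2'$-elusive. If not, choose a derangement $b\in L$ of odd prime order $\rho$ on $\Delta_1$, and using the isomorphism via $g$ choose a derangement $a\in T_1\times\cdots\times T_{k/2}$ of order $\rho$ on $\Delta_2$. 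Since $a$ acts trivially on $\Delta_1$ and $b$ acts trivially on $\Delta_2$, the element $x=ab\in N$ has order $\rho$ and is fixed-point-free on both $\Delta_1$ and $\Delta_2$, i.e.\ a derangement of odd prime order on $\Omega$, contradicting $2'$-elusivity. Therefore $\mathcal{A}$ is a faithful transitive $2'$-elusive action of $L\cong T^{k/2}$, so Lemma~\ref{l:socle} determines the pair $(T,S_\alpha)$; combining $N_\alpha=(N\cap M_1)\times S_\alpha=T^{k/2}\times S_\alpha$ with its three cases yields exactly (i), (ii) and (iii).

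I expect the main obstacle to be precisely this last step: the device of splicing an odd-prime-order derangement of $L$ on $\Delta_1$ together with its $g$-conjugate derangement on $\Delta_2$ into a single derangement of $N$ on $\Omega$ is what forces $\mathcal{A}$ to be $2'$-elusive and so lets us invoke Lemma~\ref{l:socle}; getting the supports and orders to match correctly is the delicate point. By contrast, the orbit combinatorics showing $k$ is even with $I,I'$ the two halves is essential but routine, the one subtlety there being to verify $C_G(N)=1$ and that $N\cap M_1$ is a genuine nonempty subproduct, since this is exactly what makes the two halves of the coordinate set visible.
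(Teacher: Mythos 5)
Your later steps track the paper closely: the paper likewise works with a (minimal) normal subgroup inside the kernel of $G^+$ on $\Delta_1$, conjugates by $g\in G\setminus G^+$ to get the kernel piece on $\Delta_2$, splices a derangement $h$ of odd prime order with $h^g$ to get a derangement $(h,h^g)$ on $\Omega$, and then feeds the resulting faithful transitive $2'$-elusive action into Lemma \ref{l:socle}; your observation that $|\Delta_1|$ must be divisible by an odd prime, and your combinatorics forcing $k$ even with $K^+$ intransitive of index two, are also sound. The genuine gap is in your opening uniqueness step. From $[N,M]=1$ and \cite[Theorem 4.2A]{DM} you may conclude only that the \emph{induced} groups $N^{\Delta_i}$ and $M^{\Delta_i}$ are regular, not that $N$ and $M$ are regular on the orbits; consequently the assertion that ``every nontrivial element of either is fixed-point-free on all of $\Omega$'' is false whenever $N$ has a nontrivial kernel on one orbit, since elements of that kernel fix that orbit pointwise. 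This is not a corner case but exactly the regime of this lemma: the minimal normal subgroup that actually occurs here, $N=M_1\times M_1^{\,g}$ with $M_1$ in the kernel on $\Delta_1$, is half made up of elements fixing $\Delta_1$ pointwise. Your dismissal of the abelian case has the same flaw (semiregularity of $N$ on $\Omega$ requires faithfulness on the orbits, which you do not have). Since your deduction $C_G(N)=1$, and hence $N\cap M_1\neq 1$, rests on uniqueness, the hole propagates through the rest of the argument.

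The gap is repairable with your own later tools, applied inside the hypothetical: with $K_1,K_2$ the kernels of $N$ on $\Delta_1,\Delta_2$, regularity of the images shows every element of odd prime order in $N$ outside $K_1\cup K_2$ is a derangement; if $N=T^k$ with $T$ nonabelian then $K_1,K_2$ are subproducts with disjoint supports, so either a leftover simple factor supplies such an element, or $N=K_1\times K_2$ and splicing $a\in K_1$, $b\in K_2$ of a common odd prime order dividing $|T|$ yields a derangement $ab$; if $N$ is elementary abelian of odd exponent it would be the union of the two proper subgroups $K_1,K_2$, which is impossible, and in the $2$-group case $|\Omega|$ is a $2$-power. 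With this repair your route to uniqueness is in fact more elementary than the paper's, which proves uniqueness \emph{last}: it sets $L=(G^+)^{\Delta_2}$, uses \cite[Lemma 3.2(a)]{Praeger} to bound $\soc(G)$, handles ${\rm M}_{11}$ and ${}^2F_4(2)'$ via self-normalising point stabilisers (forcing $C_{\Sym(\Delta_2)}(M_1)=1$), and for $\PSL_2(p)$ shows $C_L(M_1)$ has odd order and is semiregular, then rules out $C_L(M_1)\neq 1$ using the fact that $|\Delta_2|$ is divisible by the $2$-power $p+1$, together with \cite[Lemma 3.5 and Theorem 1.1]{Praeger}. As submitted, however, your uniqueness step fails, so the proof is incomplete.
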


\begin{proof}
We adapt the proof of \cite[Lemma 4.7]{GX}. Without loss of generality, we may assume that the action of $G^{+}$ on $\Delta_1$ is not faithful. Let $M_1$ be a minimal normal subgroup of $G^+$ contained in the kernel of the action of $G^+$ on $\Delta_1$. Fix an element $g\in G\setminus G^+$ and observe that $M_1^g$ is a minimal normal subgroup of $G^+$ contained in the kernel of the action of $G^+$ on $\Delta_2$ (in particular, $G^+$ is not faithful on either orbit). Since $G$ is faithful on $\Omega$, it follows that $M_1\cap M_1^g=1$, $M_1$ is faithful on $\Delta_2$ and $M_1^g$ is faithful on $\Delta_1$. In addition, $M_1$ acts transitively on $\Delta_2$, and $M_1^g$ acts transitively on $\Delta_1$ (since $G$ is biquasiprimitive). Since $g^2\in G^+$ we deduce that $(M_1^g)^g=M_1$ and so $N=M_1\times M_1^g$ is a minimal normal subgroup of $G$.  Moreover, if $h\in M_1$ is a derangement on $\Delta_2$, then $(h,h^g)\in N$ is a derangement on $\Omega$. Therefore $M_1$ is $2'$-elusive on $\Delta_2$, so the possibilities for $M_1$ and $(M_1)_\alpha$ are given by Lemma \ref{l:socle} (where $\a \in \Delta_2$). It follows that $N=T^k$ for some even integer $k \geqs 2$, and 
$$N_\alpha=(M_1)_\alpha\times (M_1)^g\cong (M_1)_\alpha\times T^{k/2}.$$ 
Therefore, to complete the proof of the lemma it remains to show that $N$ is the unique minimal normal subgroup of $G$. Set $L=(G^+)^{\Delta_2}$ and note that 
$$\soc(G)\leqslant \soc(L)\times \soc(L)$$
by \cite[Lemma 3.2(a)]{Praeger}.

First assume that $T={\rm M}_{11}$ or ${}^2F_4(2)'$. Then $(M_1)_\alpha$ is self-normalising in $M_1$, so \cite[Theorem 4.2A]{DM} implies that $C_{\Sym(\Delta_2)}(M_1)=1$ and thus $\soc(L)=M_1$. We conclude that $N$ is the unique minimal normal subgroup of $G$.

Finally, let us assume that $T=\PSL_2(p)$ with $p$ a Mersenne prime. As usual, let $r$ be the product of the distinct prime divisors of $(p-1)/2$. Then   
$$(C_p{:}C_r)^{k/2} \leqslant (M_1)_\alpha \leqslant (C_p{:}C_{(p-1)/2})^{k/2}$$ 
and we note that $C_p{:}C_{r} \normeq C_p{:}C_{(p-1)/2}$ and $C_p{:}C_{(p-1)/2}$ is a maximal subgroup of $T$. In particular, $N_{M_1}((M_1)_\alpha)=(C_p{:}C_{(p-1)/2})^{k/2}$ has odd order. Therefore,  \cite[Theorem 4.2A]{DM} implies that $C_{L}(M_1)$ has odd order and is semiregular on $\Delta_2$.

If $C_L(M_1)=1$ then $\soc(L)=M_1$ and so $N$ is the unique minimal normal subgroup of $G$. Now assume $C_{L}(M_1)\neq 1$. Let $J$ be a minimal normal subgroup of $L$ that is contained in $C_{L}(M_1)$. Since $C_L(M_1)$ has odd order, $J$ is elementary abelian. However, $|\Delta_2|$ is divisible by $p+1$, which is a power of 2, and so $J$ is intransitive on $\Delta_2$. In particular, $L$ is not quasiprimitive on $\Delta_2$. Moreover, since $G^+$ is not faithful on its orbits, \cite[Lemma 3.5]{Praeger} implies that the structure of $G$ is as in case (b) of \cite[Theorem 1.1]{Praeger}. In particular, $M_1$ is the unique transitive minimal normal subgroup of $L$ and $\soc(G)=M_1\times (M_1)^g=N$, so $N$ is the unique minimal normal subgroup of $G$. 
\end{proof}

We are now in a position to complete the proof of Theorem \ref{t:bqprim2dash}. In the statement and proof of Theorem \ref{t:bi2}, we will write $R$ for the almost simple maximal subgroup $\PSL_2(25).2_3<{}^2F_4(2)$ (this is a nonsplit extension). 

\begin{theorem}\label{t:bi2}
Let $G \leqs {\rm Sym}(\Omega)$ be a finite $2'$-elusive biquasiprimitive permutation group with point stabiliser $H=G_{\a}$ and socle $N$. Let $K \leqs S_k$ be the transitive group induced by $G$ on the set of $k$ simple direct factors of $N$ and let $K^+ \leqs K$ be the group induced by $G^+$. Assume that $G^+$ is not faithful on at least one of its orbits. Then $k$ is even, $K^+$ is intransitive, $|K:K^+|=2$ and one of the following holds:
\begin{itemize}\addtolength{\itemsep}{0.2\baselineskip}
\item[{\rm (i)}] $G={\rm M}_{11}\Wr K$ and $H=(\PSL_2(11)^{k/2} \times {\rm M}_{11}^{k/2}){:}K^+$;

\item[{\rm (ii)}] $N = ({}^2F_4(2)')^k\normeq G\leqslant {}^2F_4(2)\Wr K$, $N_{\a} = {\rm PSL}_{2}(25)^{k/2} \times ({}^2F_4(2)')^{k/2}$ and  
$$H = N_{G^+}(N_{\a}) = G^+\cap (R^{k/2}\times {}^2F_4(2)^{k/2}){:}K^+$$
with $G^+ = G \cap ({}^2F_4(2)\Wr K^+)$; 

\item[{\rm (iii)}] $N = \PSL_{2}(p)^k \normeq G \leqslant \PGL_2(p)\Wr K$, 
$$(C_p{:}C_r)^{k/2} \times \PSL_2(p)^{k/2} \leqslant N_{\alpha} <(C_p{:}C_{(p-1)/2})^{k/2}\times \PSL_2(p)^{k/2}$$
and
$$H < N_{G^+}(N_\alpha)=G^+\cap ((C_p{:}C_{p-1})^{k/2}\times \PGL_2(p)^{k/2}){:}K^+,$$
where $G^+ = G \cap (\PGL_2(p)\Wr K^+)$ and $p$ is a Mersenne prime.
\end{itemize}
Moreover, each group $G$ in (i) and (ii) is $2'$-elusive and biquasiprimitive.
\end{theorem}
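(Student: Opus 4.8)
The plan is to build on Lemma \ref{lem:NG+unfaith}, which already does most of the structural work: under the present hypotheses it provides a unique minimal normal subgroup $N = T^k$ with $k$ even, and pins down the pair $(T, N_\alpha)$ to the three cases in the statement. Since $N$ is the unique minimal normal subgroup and nonabelian, $C_G(N) = 1$, so $G \leqslant \Aut(N) = \Aut(T) \Wr S_k$; using $\Aut(\mathrm{M}_{11}) = \mathrm{M}_{11}$, $\Aut({}^2F_4(2)') = {}^2F_4(2)$ and $\Aut(\PSL_2(p)) = \PGL_2(p)$ (as $p$ is prime, there are no field automorphisms) identifies the ambient wreath products appearing in (i)--(iii).

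Next I would extract from the proof of Lemma \ref{lem:NG+unfaith} the decomposition $N = M_1 \times M_1^g$, where $M_1$ and $M_1^g$ are minimal normal subgroups of $G^+$, each isomorphic to $T^{k/2}$, and $g \in G \setminus G^+$ interchanges them. As both $M_1$ and $M_1^g$ are normal in $G^+$, the group $G^+$ preserves the partition of the $k$ simple direct factors of $N$ into the two halves lying in $M_1$ and $M_1^g$; hence $K^+$ is intransitive. Since $K$ is transitive (by minimality of $N$) and $|K : K^+| \leqslant |G:G^+| = 2$, we conclude $|K:K^+| = 2$, with $g$ inducing a permutation swapping the two halves. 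This also yields $G^+ = G \cap (\Aut(T) \Wr K^+)$, as recorded in (ii) and (iii).

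The main work is to determine $H = G_\alpha$. Writing $\alpha$ in the $N$-orbit $\Delta_2$, and noting that $G^+$ is the setwise stabiliser of $\{\Delta_1,\Delta_2\}$, we get $H \leqslant G^+$; moreover $N_\alpha = N \cap H \trianglelefteq H$ forces $H \leqslant N_{G^+}(N_\alpha)$. Here $M_1^g$ is the kernel of the $G^+$-action on $\Delta_2$ while $M_1$ acts faithfully and transitively on $\Delta_2$ with stabiliser $(M_1)_\alpha = S^{k/2}$, so by Lemma \ref{lemPA} the group $(G^+)^{\Delta_2}$ embeds in $\Aut(T) \Wr S_{k/2}$ in its product action on $\Delta^{k/2}$ with $\Delta = T/S$. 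In cases (i) and (ii) the relevant normaliser $N_{\Aut(T)}(S)$ is maximal and self-normalising in $\Aut(T)$ (namely $\PSL_2(11)$ for $\mathrm{M}_{11}$ and $R=\PSL_2(25).2_3$ for ${}^2F_4(2)$), so the normaliser of $N_\alpha^{\Delta_2} = S^{k/2}$ in the product-action group equals the point stabiliser; pulling this back through the kernel $M_1^g \trianglelefteq G^+$ gives $H = N_{G^+}(N_\alpha)$, and the factorwise computations $N_{\mathrm{M}_{11}}(\PSL_2(11)) = \PSL_2(11)$, $N_{{}^2F_4(2)}(\PSL_2(25)) = R$ together with the complementary top group $K^+$ yield the explicit descriptions in (i) and (ii). In case (iii), by contrast, $N_{\PSL_2(p)}(C_p{:}C_r) = C_p{:}C_{(p-1)/2}$ strictly contains $C_p{:}C_r$ (strictly, since $2'$-elusivity forces $r < (p-1)/2$), which produces the strict inclusion $H < N_{G^+}(N_\alpha)$; combined with $N_{\PGL_2(p)}(C_p{:}C_r) = C_p{:}C_{p-1}$ on the first half and the normality of $\PSL_2(p)$ in $\PGL_2(p)$ on the second half, this gives the stated overgroup $N_{G^+}(N_\alpha)=G^+\cap ((C_p{:}C_{p-1})^{k/2}\times \PGL_2(p)^{k/2}){:}K^+$.

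Finally, for the \emph{Moreover} assertion I would verify that the groups in (i) and (ii) are genuinely biquasiprimitive and $2'$-elusive. Biquasiprimitivity follows because $N$ is the unique minimal normal subgroup and has exactly two orbits, so every nontrivial normal subgroup of the transitive group $G$ contains $N$ and hence has at most two orbits. For $2'$-elusivity, every element of odd prime order lies in $G^+$ (as $|G:G^+|=2$); by Lemma \ref{lemPA} the action $(G^+)^{\Delta_2}$ is contained in $\Aut(T) \Wr S_{k/2}$ on $\Delta^{k/2}$, and the base actions of $\mathrm{M}_{11}$ on $\mathrm{M}_{11}/\PSL_2(11)$ and of ${}^2F_4(2)$ on ${}^2F_4(2)'/\PSL_2(25)$ are $2'$-elusive by Theorem \ref{t:2dashas}, so Lemma \ref{lem:wr} shows $\Aut(T) \Wr S_{k/2}$ is $2'$-elusive on $\Delta^{k/2}$. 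Consequently every odd-prime-order element of $G^+$, being an element of this $2'$-elusive overgroup, fixes a point of $\Delta_2 \subseteq \Omega$, so $G$ is $2'$-elusive. I expect the determination of $H$ — the product-action stabiliser identification, the factorwise normaliser computations, and the strict inclusion with precise overgroup in case (iii) — to be the main obstacle; the biquasiprimitivity and $2'$-elusivity verifications then follow cleanly from the earlier lemmas.
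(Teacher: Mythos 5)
Your proposal is correct and takes essentially the same route as the paper: both build directly on Lemma \ref{lem:NG+unfaith}, identify $G\leqslant \Aut(T)\Wr K$ with $K^+$ intransitive of index two (the paper derives this from $G^+=NH$ and $H\leqslant N_{G^+}(N_\alpha)$, you from the normality of $M_1$ and $M_1^g$ in $G^+$), pin down $H$ case by case inside $N_{G^+}(N_\alpha)$, and verify biquasiprimitivity and $2'$-elusivity in cases (i) and (ii) from Lemma \ref{lem:wr} and the earlier classification results. The only differences are local and both variants are sound: in case (ii) the paper obtains $H=N_{G^+}(N_\alpha)$ from the primitivity of $G^+$ on its two orbits (so that $H$ is maximal in $G^+$), whereas you use the unique fixed point of $\PSL_2(25)^{k/2}$ in the product action on $\Delta^{k/2}$; and in case (iii) the paper gets the strict inclusion $H<N_{G^+}(N_\alpha)$ by comparing the $2$-power index $|G^+:N_{G^+}(N_\alpha)|$ with the odd prime dividing $|\Omega|=|G:H|$, whereas you observe that $N_N(N_\alpha)$ strictly contains $N_\alpha=H\cap N$.
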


\begin{proof}
By Lemma \ref{lem:NG+unfaith}, $G$ has a unique minimal normal subgroup $N=T^k$, where $k \geqs 2$ is even and the possibilities for $T$ and $N_{\a}$ are described in the lemma. In particular, we note that $G\leqslant \Aut(N) = \Aut(T) \Wr S_k$. Write $N = T_1 \times \cdots \times T_k$ with $T_i \cong T$ for each $i$, and let $K \leqs S_k$ be the permutation group induced by the conjugation action of $G$ on $\{T_1, \ldots, T_k\}$. Note that $K$ is transitive since $N$ is minimal. Moreover, $G\leqslant\Aut(T)\Wr K$.

Now $G^+=NH$ and $H \leqs N_{G^+}(N_{\alpha})$, so $G^+$ has two orbits on $\{T_1, \ldots, T_k\}$ and it induces an intransitive index-two subgroup $K^+<K$. Note that $G^+=G\cap (\Aut(T)\Wr K^+)$.  We may assume that the orbits of $K^+$ are $\{T_1, \ldots, T_{k/2}\}$ and $\{T_{k/2+1}, \ldots, T_{k}\}$. We now consider the three cases arising in Lemma \ref{lem:NG+unfaith}.

First assume that $T={\rm M}_{11}$ and $N_{\alpha}=\PSL_2(11)^{k/2}\times {\rm M}_{11}^{k/2}$. Since $\Aut({\rm M}_{11})={\rm M}_{11}$ it follows that $G={\rm M}_{11}\Wr K$, so $G^+={\rm M}_{11}\Wr K^+$ and since $G^+=NH$ we deduce that $H=(\PSL_2(11)^{k/2}\times {\rm M}_{11}^{k/2}){:}K^+$ as in case (i). Now each $g \in G^+$ of prime order is conjugate to an element of $\PSL_2(11)\Wr K^+$ (this follows from \cite[Theorem 4.1(e)]{CGJKKMN}), which is contained in $H$, so any  group $G$ of this form is $2'$-elusive since every element in $G$ of odd prime order is contained in $G^+$. In addition, $G$ is biquasiprimitive since every nontrivial normal subgroup of $G$ contains $N$, which is transitive on $\Delta_1$ and $\Delta_2$.

Next assume $T={}^2F_4(2)'$ and $N_{\a} = {\rm PSL}_{2}(25)^{k/2} \times ({}^2F_4(2)')^{k/2}$. Here $G\leqslant {}^2F_4(2)\Wr K$, $G^+ = G \cap ({}^2F_4(2)\Wr K^+)$ and 
$$H \leqslant N_{G^+}(N_\alpha)=G^+\cap (R^{k/2}\times {}^2F_4(2)^{k/2}){:}K^+$$ 
as in (ii). 
Since $\PSL_2(25)$ is a maximal subgroup of ${}^2F_4(2)'$, and $G^+$ acts transitively on $\{T_1, \ldots, T_{k/2}\}$, it follows that $G^+$ acts primitively on $\Delta_1$ and $\Delta_2$, inducing a subgroup of ${}^2F_4(2) \Wr S_{k/2}$ on each orbit. Therefore $H$ is a maximal subgroup of $G^+$ and thus $H=N_{G^+}(N_\alpha)$. As in the previous case, any such group $G$ is biquasiprimitive. Moreover, every element in $G$ of odd prime order is contained in $G^+$, and Theorem \ref{t:prim2dash} implies that every element of odd prime order in $G^+$ is conjugate to an element of $\PSL_2(25) \Wr K^+$, which is contained in $H$. We conclude that $G$ is $2'$-elusive.

Finally, let us assume that $T=\PSL_{2}(p)$ with $p$ a Mersenne prime. Here we have $G\leqslant \PGL_2(p)\Wr K$, $G^+ = G \cap (\PGL_2(p)\Wr K^+)$ and 
$$H < N_{G^+}(N_\alpha)=G^+\cap (C_p{:}C_{p-1})^{k/2}\times \PGL_2(p)^{k/2}){:}K^+$$
as in (iii). Note that $H$ is a proper subgroup of $N_{G^+}(N_\alpha)$ since $|G:N_{G^+}(N_\alpha)|$ is a power of $2$ and $|\Omega| = |G:H|$ is divisible by an odd prime.
\end{proof}

\vs

This completes the proof of Theorem \ref{t:bqprim2dash}.

\begin{remark}\label{r:bqex}
Examples of $2'$-elusive groups in case (iii) do exist. For instance, if $p$ is a Mersenne prime with $r<(p-1)/2$, then we can take
$$G=\PSL_2(p) \Wr (S_{k/2}\Wr S_2),\;\; H= ((C_p{:}C_r)^{k/2}\times \PSL_2(p)^{k/2}) {:} S_{k/2}^2,$$ 
where $S_{k/2}\Wr S_2$ acts imprimitively on $k$ points. Notice that $|G:NH|=2$ in these examples (where $N = \soc(G)$), so $G$ is indeed biquasiprimitive.
\end{remark}

\section{Arc-transitive graphs of prime valency}\label{s:graphs}

In this final section we will use Theorems \ref{t:prim2dash},  \ref{t:qprim2dash} and \ref{t:bqprim2dash} to determine the $2'$-elusive quasiprimitive and biquasiprimitive groups with a prime subdegree. As explained in the Introduction, this is the main step in the proof of Theorem \ref{t:graph}, which we anticipate will play a key role in the proof of the Polycirculant Conjecture for arc-transitive graphs of valency $2p$, where $p$ is an odd prime. We start by recalling some standard terminology.

Let $G \leqs {\rm Sym}(\Omega)$ be a finite transitive permutation group and let $\alpha\in\Omega$. Recall that the orbits of $G_\alpha$ on $\Omega \setminus \{\alpha\}$ are 
called \emph{suborbits}, and the lengths of these orbits are the \emph{subdegrees} of $G$. It is well known that there is a one-to-one correspondence between the set of suborbits of $G$ and the set of digraphs with vertex set $\Omega$ on which $G$ acts arc-transitively. 
More precisely, the suborbit corresponding to a given digraph $\Gamma$ is the set $\Gamma^+(\alpha)$ of out-neighbours of $\alpha$ in $\Gamma$. On the other hand, if $\b^{G_{\a}}$ is the suborbit containing $\b$ then the corresponding digraph on $\Omega$ has arc-set 
$$(\alpha,\beta)^G = \{(\alpha^g,\beta^g)\mid g\in G\},$$
which is simply the orbit of $(\a,\b)$ with respect to the natural action of $G$ on $\Omega \times \Omega$. 
Such an arc-set is called an \emph{orbital} of $G$, and the corresponding digraph is referred to as an \emph{orbital digraph}. Further, we say that the suborbit $\b^{G_{\a}}$, and also the orbital $(\alpha,\beta)^G$, is \emph{self-paired} if there is an element $g\in G$ that interchanges $\alpha$ and $\beta$. In this situation, $(\omega_1,\omega_2)\in (\alpha,\beta)^G$ if and only if 
$(\omega_2,\omega_1)\in (\alpha,\beta)^G$, in which case the corresponding digraph $\Gamma$ is a graph since we can ignore the directions on the edges (note that $\Gamma$ is $|\b^{G_{\a}}|$-regular).

Note that if $g \in G$ interchanges $\alpha$ and $\beta$ then it lies in $N_G(G_\alpha\cap G_\beta)$, but not in $G_\alpha$. Also note that if $\alpha^g=\beta$ then $G_{\alpha,\beta}=G_\alpha\cap (G_\alpha)^g$ and $|\beta^{G_\alpha}|=|G_\alpha:G_\alpha \cap (G_\alpha)^g|$.
We will need the following two lemmas (the proofs are easy exercises).

\begin{lemma}\label{l:con}
Let $G \leqs {\rm Sym}(\Omega)$ be a finite transitive permutation group and let $\b^{G_{\a}}$ be a self-paired suborbit of $G$. Then the corresponding orbital graph is connected if and only if $G=\langle G_\alpha,g\rangle$ for each $g\in G$ that interchanges $\alpha$ and $\beta$.
\end{lemma}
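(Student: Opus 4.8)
The plan is to show that the connectivity of the orbital graph $\Gamma$ is governed entirely by the subgroup $B = \langle G_\alpha, g\rangle$, and that $\Gamma$ is connected precisely when $B = G$.

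First I would record a preliminary observation that makes the phrase ``for each $g$'' harmless. If $g_1, g_2 \in G$ both interchange $\alpha$ and $\beta$, then $\alpha^{g_1 g_2^{-1}} = \beta^{g_2^{-1}} = \alpha$, so $g_1 g_2^{-1} \in G_\alpha$ and hence $\langle G_\alpha, g_1\rangle = \langle G_\alpha, g_2\rangle$. Thus $B$ does not depend on the choice of $g$, and ``$G = \langle G_\alpha, g\rangle$ for each $g$'' is equivalent to the same statement for some $g$. Note also that $G_\alpha \leqslant B$, so $B_\alpha = G_\alpha$, and therefore $B$ is transitive on $\Omega$ if and only if $|B| = |\Omega|\,|G_\alpha| = |G|$, i.e. if and only if $B = G$. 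This reduces the lemma to the assertion that $\Gamma$ is connected if and only if $B$ is transitive on $\Omega$.

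The heart of the argument, and the step I expect to be the main obstacle, is to identify the vertex set of the connected component $C$ of $\Gamma$ containing $\alpha$ with the $B$-orbit $\alpha^B$. For $\alpha^B \subseteq C$, I would observe that the setwise stabiliser $G_C$ of $C$ contains $B$: each element of $G$ acts as a graph automorphism of $\Gamma$, the subgroup $G_\alpha$ fixes $\alpha$ and hence preserves its component, and $g$ sends $\alpha$ to the adjacent vertex $\beta \in C$ and so preserves $C$ as well; since $B$ is generated by these elements, $\alpha^B \subseteq \alpha^{G_C} \subseteq C$. For the reverse inclusion I would show that $\alpha^B$ is closed under taking neighbours, hence is a union of connected components, forcing $\alpha^B = C$ since $\alpha \in \alpha^B \subseteq C$ and $C$ is connected. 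Here I use that, because the orbital is self-paired, the neighbours of $\alpha$ in $\Gamma$ are exactly the suborbit $\beta^{G_\alpha}$; as $\beta = \alpha^g \in \alpha^B$ and $G_\alpha \leqslant B$, every neighbour of $\alpha$ lies in $\alpha^B$. Transporting by an arbitrary $b \in B$, which is a graph automorphism, the neighbours of $\alpha^b$ are the $b$-images of $\beta^{G_\alpha}$, and these lie in $(\alpha^B)^b = \alpha^B$. This establishes the required closure.

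With the identification $V(C) = \alpha^B$ in hand, the conclusion is immediate: $\Gamma$ is connected if and only if $C = \Omega$, i.e. $\alpha^B = \Omega$, which by the preliminary observation is equivalent to $B = G$. Combining this with the independence of $B$ from the choice of $g$ yields the biconditional in the stated form. The only real care is needed in the closure-under-adjacency step, where one must correctly invoke arc-transitivity and self-pairing to describe the neighbourhood of an arbitrary vertex of $\alpha^B$; the remaining orbit-stabiliser bookkeeping is routine.
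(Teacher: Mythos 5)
Your proof is correct and complete: the well-definedness of $B=\langle G_\alpha,g\rangle$ independent of the choice of $g$, the identification of the component of $\alpha$ with the orbit $\alpha^B$ via closure under adjacency, and the orbit--stabiliser count showing $B$ transitive if and only if $B=G$ are all sound. The paper gives no proof of this lemma (it is explicitly left as an easy exercise), and your argument is exactly the standard one the authors intend.
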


\begin{lemma}\label{lem:imprimsubdeg}
Let $G\leqslant\Sym(\Omega)$ be a transitive imprimitive permutation group with system of imprimitivity $\mathcal{P}$. Let $\alpha,\omega\in\Omega$ and let $A\in\mathcal{P}$ be the block containing $\omega$. Then the following hold:
\begin{itemize}\addtolength{\itemsep}{0.2\baselineskip}
\item[{\rm (i)}] $|A^{G_{\alpha}}|$ divides $|\omega^{G_{\alpha}}|$;
\item[{\rm (ii)}] If $\alpha \in A$ then the orbital digraph corresponding to $\omega^{G_{\a}}$ is disconnected.
\end{itemize}
\end{lemma}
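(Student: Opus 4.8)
The plan is to derive both parts directly from the orbit--stabiliser theorem together with the single fact that $G$, and hence the point stabiliser $G_\alpha$, preserves the block system $\mathcal{P}$. In particular $G_\alpha$ acts on $\mathcal{P}$, so $A^{G_\alpha}$ is a genuine orbit and the statements make sense.

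For part (i) the key observation is a containment of stabilisers inside $G_\alpha$. If $g \in G_\alpha$ fixes the point $\omega$, then it fixes setwise the unique block of $\mathcal{P}$ containing $\omega$, namely $A$; thus $(G_\alpha)_\omega \leqs (G_\alpha)_{\{A\}}$, where $(G_\alpha)_{\{A\}}$ denotes the stabiliser in $G_\alpha$ of $A \in \mathcal{P}$. By orbit--stabiliser, $|\omega^{G_\alpha}| = |G_\alpha : (G_\alpha)_\omega|$ and $|A^{G_\alpha}| = |G_\alpha : (G_\alpha)_{\{A\}}|$, so the tower $(G_\alpha)_\omega \leqs (G_\alpha)_{\{A\}} \leqs G_\alpha$ shows that $|A^{G_\alpha}|$ divides $|\omega^{G_\alpha}|$. (Equivalently, one can check that the map $\omega^{G_\alpha} \to A^{G_\alpha}$ sending each point to the block containing it is a surjective $G_\alpha$-equivariant map of transitive $G_\alpha$-sets, whose fibres then all have equal size; this gives the same divisibility.)

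For part (ii) I would write the orbital digraph $\Gamma$ explicitly: it has vertex set $\Omega$ and arc set $(\alpha,\omega)^G = \{(\alpha^g,\omega^g) : g \in G\}$. Since $\alpha$ and $\omega$ both lie in the block $A$, for every $g \in G$ the points $\alpha^g$ and $\omega^g$ lie in the single block $A^g \in \mathcal{P}$; hence every arc of $\Gamma$ has both endpoints inside one block. Consequently no path of $\Gamma$ (directed or undirected) can join vertices in distinct blocks, so each connected component of $\Gamma$ is contained in a block of $\mathcal{P}$. As $\mathcal{P}$ is a nontrivial system of imprimitivity there is more than one block, and therefore $\Gamma$ is disconnected. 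Both arguments are routine; the only points needing care are using the stabiliser inclusion in (i) in the correct direction (the smaller stabiliser yields the larger, divisible orbit), and noting in (ii) that the block partition refines the partition into connected components.
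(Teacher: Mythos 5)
Your proof is correct; the paper itself leaves this lemma as an ``easy exercise'' and supplies no argument, and what you give is precisely the expected one. For (i), the stabiliser inclusion $(G_\alpha)_\omega \leqslant (G_\alpha)_{\{A\}}$ together with orbit--stabiliser (or equivalently your $G_\alpha$-equivariant surjection $\omega^{G_\alpha}\to A^{G_\alpha}$ with equal-size fibres) is exactly right, and for (ii), observing that every arc $(\alpha^g,\omega^g)$ has both endpoints in the single block $A^g$, so that connected components are confined to blocks and the nontriviality of $\mathcal{P}$ forces disconnectedness, settles the claim with no gaps.
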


In order to prove Theorem \ref{t:graph}, we need to determine the $2'$-elusive quasiprimitive and biquasiprimitive  groups  with a self-paired suborbit of prime length which satisfies the connectedness condition in Lemma \ref{l:con}. To get started, in the next lemma we determine the subdegrees of the relevant almost simple groups (see Theorem \ref{t:2dashas}). Note that in the statement of the lemma, we write $\ell^k$ to denote that $\ell$ occurs as a subdegree $k$ times.   

\begin{lemma}\label{lem:smallsubdegs}
Let $G \leqs {\rm Sym}(\Omega)$ be a finite $2'$-elusive quasiprimitive or biquasiprimitive almost simple  group with point stabiliser $H$. Let $\mathcal{S}$ be the multiset of subdegrees of $G$. Then $G$ has a prime subdegree if and only if one of the following holds:
\begin{itemize}\addtolength{\itemsep}{0.2\baselineskip}
\item[{\rm (i)}] $(G,H) = ({\rm M}_{11}, {\rm PSL}_{2}(11))$, $\mathcal{S} = \{1, 11\}$;
\item[{\rm (ii)}] $(G,H) = ({\rm M}_{10}, A_5)$ or $(\Aut(A_6),S_5)$, $\mathcal{S} = \{1,5,6\}$; 
\item[{\rm (iii)}] $(G,H) = (\PSL_2(p), C_p{:}C_s)$ or $(\PGL_2(p), C_p{:}C_{2s})$, $\mathcal{S} = \{1^{(p-1)/2s}, p^{(p-1)/2s}\}$, $p$ is a Mersenne prime, $C_r \leqs C_s < C_{(p-1)/2}$ and $r$ is the product of the distinct prime divisors of $(p-1)/2$;    
\item[{\rm (iv)}]  
$(G,H) = (\PGL_2(p), C_p{:}C_s)$, $\mathcal{S} =\{ 1^{(p-1)/s}, p^{(p-1)/s}\}$, where $p,s$ are as in part (iii).
\end{itemize}
Moreover, if $\Delta$ is a suborbit of prime length and $\Gamma$ is the corresponding orbital digraph, then $\Delta$ is self-paired and $\Gamma$ is connected, with the exception of case (ii).
\end{lemma}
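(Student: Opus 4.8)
The plan is to work through the $2'$-elusive almost simple groups listed in Table~\ref{tab:2dashelusive} (via Theorem~\ref{t:2dashas}), keep only those that are quasiprimitive or biquasiprimitive, and then compute the subdegrees of the survivors. Writing $T=\soc(G)$, the number of $T$-orbits on $\Omega$ is $|G:TH|$, so $G$ is quasiprimitive exactly when this index equals $1$ and biquasiprimitive exactly when it equals $2$. This immediately discards $(\Aut(A_6),A_5)$, where $H\leqs T$ and $|G:T|=4$ force four $T$-orbits; every other line of the table produces a quasiprimitive or biquasiprimitive group, and these are precisely the candidates appearing in (i)--(iv) together with the three ${}^2F_4(2)$-type actions on cosets of $\PSL_2(25)$ or $\PSL_2(25).2_3$.

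I would dispose of the ${}^2F_4(2)$-type cases first, since these are the ones with no prime subdegree. A prime subdegree $q$ is the index in $H$ of some point stabiliser, so $q$ divides $|H|$ and $H$ has a subgroup of index $q$. Now $\PSL_2(25)$ is simple with smallest proper-subgroup index equal to $26$, while every prime divisor of $|\PSL_2(25).2_3|$ lies in $\{2,3,5,13\}$; hence the only conceivable prime subdegree is $q=2$, coming from the index-two subgroup $\PSL_2(25)$ of $\PSL_2(25).2_3$. But a suborbit of length $2$ is impossible here because that action is primitive: a connected $2$-regular orbital graph would force $G$ into a dihedral group. This eliminates a prime subdegree in all three cases.

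For the remaining groups I would compute $\mathcal{S}$ directly. The actions of ${\rm M}_{11}$ on cosets of $\PSL_2(11)$ and of ${\rm M}_{10}$, $\Aut(A_6)$ on cosets of $A_5$, $S_5$ are small and explicit: the first is $3$-transitive on $12$ points, so $\mathcal{S}=\{1,11\}$ with orbital graph $K_{12}$; in the other two the socle $A_6$ has two blocks of size $6$, and one checks $\mathcal{S}=\{1,5,6\}$, with the suborbit of length $5$ lying inside the block containing $\alpha$ and the suborbit of length $6$ equal to the opposite block. The core of the argument is the family $G=\PSL_2(p)$ or $\PGL_2(p)$ acting on cosets of $H=C_p{:}C_s$ (or $C_p{:}C_{2s}$), which I would study through the projection $\pi\colon\Omega\to G/B$ onto the projective line, where $B=N_G(C_p)$ is the Borel overgroup of $H$. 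Since $C_p={\rm O}_p(H)$ is a full Sylow $p$-subgroup of $G$ and every subgroup of $B$ containing $C_p$ is normal in $B$, we get $N_G(H)=N_G(C_p)=B$, whence $\Fix(H)=\Fix(C_p)$ is exactly the fibre over $\infty$, of size $|B:H|$. The key point is that the torus $C_s\leqs H$ fixes the two points $\infty$ and $0$ and acts trivially on the fibre over each: this is immediate over $\infty$, where $H\normal B$ acts trivially on $B/H$, and it transfers to the fibre over $0$ on conjugating by the Weyl involution, which normalises and inverts the torus. As the $C_p$-orbits on the non-fixed part are parametrised by the fibre over $0$ and $C_s$ fixes that fibre pointwise, each such $C_p$-orbit is already a full $H$-orbit, of length $p$; this yields $\mathcal{S}=\{1^{m},p^{m}\}$ with $m=|B:H|$, as in (iii) and (iv).

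It then remains to treat self-pairing and connectedness. For self-pairing I would exhibit an explicit involution swapping $\alpha$ with a neighbour: the Weyl element $\tau\colon x\mapsto -1/x$ interchanges $\infty$ and $0$ and squares to the identity in $\PSL_2(p)$, so $\beta=\alpha^{\tau}$ lies in a prime-length suborbit self-paired by $\tau$, while for biquasiprimitive $\PGL_2(p)$ one instead uses an involution $x\mapsto a/x$ lying outside $\PSL_2(p)$ to cross into the opposite socle-orbit. Connectedness follows from Lemma~\ref{l:con}: the chosen swapping involution together with $C_p$ generates a subgroup containing $\PSL_2(p)$ (for $\tau$ this is the reduction mod $p$ of $\SL_2(\mathbb{Z})=\langle C_p,\tau\rangle$), and in the $\PGL_2(p)$ cases this subgroup also meets $\PGL_2(p)\setminus\PSL_2(p)$, so it equals $G$ and the exhibited orbital graph is connected. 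By contrast, in case (ii) the only prime subdegree is $5$, and as the corresponding suborbit lies in the block containing $\alpha$, Lemma~\ref{lem:imprimsubdeg}(ii) makes its orbital digraph disconnected; this is the stated exception. I expect the main obstacle to be the subdegree computation for $\PSL_2(p)$ and $\PGL_2(p)$, and in particular the proof that every non-fixed suborbit has length exactly $p$, which rests on the triviality of the torus action on the fibres over $\infty$ and $0$; keeping track of the socle-block structure in case (iv) is then what separates the connected cross-block orbitals from the disconnected within-block ones.
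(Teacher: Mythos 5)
Your strategy is sound and genuinely different from the paper's in two places. For the ${}^2F_4(2)$-type actions the paper simply lists the full subdegree multisets (found by machine computation) and observes that none is prime, whereas you rule them out abstractly: a prime subdegree forces a subgroup of prime index in $H$, which fails for $\PSL_2(25)$ (minimal index $26$) and leaves only index $2$ in $\PSL_2(25).2_3$. For the $\PSL_2(p)$/$\PGL_2(p)$ family the paper counts via $N=N_G(H)=C_p{:}C_{(p-1)/2}$: the fixed points number $|N:H|$ by \cite[Theorem 4.2A(i)]{DM}, and $H\cap H^g\cong C_s$ for all $g\notin N$ using the $2$-transitive action of $G$ on $G/N$ and the $p$ subgroups of order $s$ in $N$; you instead fibre $\Omega$ over the projective line and show the torus acts trivially on the fibres over $\infty$ and $0$, so every $C_p$-orbit off $\Fix(C_p)$ is a full $H$-orbit of length $p$. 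Your computation checks out (including $N_G(H)=N_G(C_p)=B$, and the transfer to the fibre over $0$ via the Weyl involution, which normalises and inverts the torus), and your filter via the number of $T$-orbits $|G:TH|$, discarding $(\Aut(A_6),A_5)$, correctly supplies the candidate list the paper leaves implicit. One small repair in the ${}^2F_4(2)$ case: your dihedral argument disposes of a \emph{self-paired} suborbit of length $2$, but a priori the suborbit might not be self-paired; it is cleaner to note that $S=H\cap H^g$ of index $2$ is normal in both $H$ and $H^g$, so maximality of $H$ and $S\neq 1$ give $N_G(S)=H$, whence $H^g=H$, a contradiction.

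The genuine gap concerns the ``Moreover'' clause, which is universally quantified: \emph{every} suborbit of prime length must be shown self-paired with connected orbital graph. You exhibit only the single suborbit containing $\alpha^{\tau}$. In case (iii) there are $(p-1)/2s\geqslant 3$ suborbits of length $p$ (note $(p-1)/2$ is odd and $s<(p-1)/2$), and your argument says nothing about the others; the paper takes an arbitrary $\beta$ with $|\beta^{G_\alpha}|=p$ and shows that $M=N_G(G_\alpha\cap G_\beta)=D_{p-1}$ is transitive on the fixed points of $G_\alpha\cap G_\beta$, producing an involution $g\in M\setminus N$ swapping $\alpha$ and $\beta$, with $G=\langle G_\alpha,g\rangle$ because $N$ is the unique maximal overgroup of $G_\alpha$. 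The fix is cheap inside your framework: since $\tau$ inverts the diagonal torus $C$, every element of the coset $C\tau$ is an involution, and $\{\alpha^{c\tau} : c\in C\}$ is the entire fibre over $0$ (as $C$ covers $B/H$), which meets every length-$p$ suborbit exactly once; each $c\tau$ lies outside $B$, the unique maximal subgroup of $\PSL_2(p)$ containing $C_p$, so $\langle G_\alpha,c\tau\rangle=G$, and similarly in the $\PGL_2(p)$ cases. Finally, your closing observation about case (iv) is correct and is a point in your favour: since $H\leqslant T=\PSL_2(p)$ there, the $(p-1)/2s$ prime-length suborbits lying inside $\alpha^{T}$ are self-paired (swapped by antidiagonal involutions of $T$) yet have disconnected orbital graphs, because $T\normeq G$ forces every edge of such an orbital graph to join vertices in the same $T$-orbit. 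So the blanket connectedness claim of the lemma in fact needs this caveat in case (iv) as well --- a subtlety the paper's proof passes over with ``a similar argument applies \dots\ we omit the details.''
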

 
\begin{proof}
The first two cases can be easily checked using {\sc Magma} \cite{magma}. Note that in case (i), $\Gamma$ is the complete graph $K_{12}$, while for the suborbit of length $5$ in case (ii), $\Gamma$ is the disjoint union of two copies of $K_6$. Similarly, one checks that
\renewcommand{\arraystretch}{1.2}
$$\mathcal{S} = \left\{\begin{array}{ll}
\{1,78, 300^2, 325^2, 975\} & \mbox{$(G,H) = ({}^2F_4(2)',{\rm PSL}_{2}(25))$ or $({}^2F_4(2),{\rm PSL}_{2}(25).2_3)$} \\
\{1^2,78^2, 300^4, 325^4, 975^2\} & \mbox{$(G,H) = ({}^2F_4(2), {\rm PSL}_{2}(25))$}
\end{array}\right.$$
\renewcommand{\arraystretch}{1}
so no cases with ${\rm soc}(G) = {}^2F_4(2)'$ arise.

In view of Theorem \ref{t:2dashas}, we may assume that $\soc(G)= {\rm PSL}_{2}(p)$ with $p$ a Mersenne prime. First consider case (iii), with $(G,H) = ({\rm PSL}_{2}(p), C_p{:}C_s)$. Let $N=N_G(H)=C_p{:}C_{(p-1)/2}$. By \cite[Theorem 4.2A(i)]{DM}, $H$ has $|N_G(H):H|=(p-1)/2s$ fixed points on $\Omega$. Note that the action of $G$ on $G/N$ is $2$-transitive with degree $p+1$, so in this action $G$ has a unique suborbit of length $p$ and thus $N\cap N^g=C_{(p-1)/2}$ for all $g\in G\setminus N$. Fix an element $g\in G\setminus N$ and let $L$ be the unique subgroup of $N \cap N^g$ of order $s$, so $H \cap H^g \leqs L$. Now $N$ contains $p$ cyclic subgroups of order $s$, each of which is contained in $H$, so $L \leqs H$ and $L \leqs H^g$. We conclude that $H \cap H^g \cong C_s$ for all $g\in G\setminus N$, so $\mathcal{S} = \{1^{(p-1)/2s}, p^{(p-1)/2s}\}$ as claimed.

Let $\Delta = \beta^{G_{\a}}$ be a suborbit of length $p$, so $G_\alpha\cap G_\beta \cong C_s$.  Note that $G_\alpha\cap G_\beta$ fixes the $(p-1)/2s$ fixed points of $G_\alpha$, and it also fixes one point from each of the $(p-1)/2s$ orbits of $G_\alpha$ of length $p$.  Set $M=N_{G}(G_\alpha\cap G_\beta)=D_{p-1}$ and note that each element of $M$ maps $\alpha$ and $\beta$ to points fixed by $G_\alpha\cap G_\beta$.  Also note that $M \cap N = C_{(p-1)/2}$ transitively permutes the fixed points of $G_\alpha$ and the set of orbits of $G_\alpha$ of length $p$. Thus $M$ has at most two orbits on the set of $(p-1)/s$ fixed points of $G_\alpha\cap G_\beta$. In fact, $N$ is the stabiliser in $G$ of the set of fixed points of $G_\alpha$ (since $N$ is maximal in $G$), and thus $M$ is transitive on the set of fixed points of $G_\alpha\cap G_\beta$. In particular, each involution in $M\setminus (M\cap N)$ interchanges $\alpha$ with a fixed point of $M$ contained in a suborbit of length $p$. Therefore, there exists an element $g \in M\setminus (M\cap N)$ that interchanges $\a$ and $\b$, so $\Delta$ is self-paired. Moreover, since $g \notin N$ and $N$ is the unique maximal subgroup of $G$ containing $G_\alpha$, it follows that $G = \langle G_\alpha, g\rangle$ and thus the corresponding orbital graph is connected by Lemma \ref{l:con}.

A similar argument applies when $G = {\rm PGL}_{2}(p)$ in (iii) or (iv). We omit the details.
\end{proof}

\begin{lemma}\label{lem:prodactionsubdegs}
Let $G \leqs {\rm Sym}(\Omega)$ be a finite transitive permutation group such that 
$$N=\soc(L)^k\normeq G\leqslant L\Wr K,$$ 
$k \geqs 2$ and $G$ acts with its product action on $\Omega=\Delta^k$. Here $L\leqslant\Sym(\Delta)$ is transitive and almost simple, $K \leqs S_k$ is the group induced by $G$ on the set of $k$ simple direct factors of $N$. Assume that the following conditions are satisfied:
\begin{itemize}\addtolength{\itemsep}{0.2\baselineskip}
\item[{\rm (a)}] $N$ is transitive on $\Omega$;
\item[{\rm (b)}] The only element of $\Delta$ fixed by $\soc(L)_\delta$ is $\delta$;
\item[{\rm (c)}] Either $K$ is transitive, or it has two equal sized orbits on the set of $k$ simple direct factors of $N$. 
\end{itemize}
Then $G$ has a prime subdegree only if $k=2$, $K=1$ and $\soc(L)$ has a prime subdegree on $\Delta$. Moreover, if $G$ has a self-paired suborbit of prime length, then the corresponding orbital graph is disconnected.
\end{lemma}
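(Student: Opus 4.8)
The plan is to analyse the subdegrees of $G$ through the block structure of the product action. Write $N = T_1 \times \cdots \times T_k$ with each $T_i \cong T := \soc(L)$ acting on the $i$-th factor of $\Omega = \Delta^k$, fix a base point $\a = (\delta_1, \ldots, \delta_k)$, and observe that $N_{\a} = T_{\delta_1} \times \cdots \times T_{\delta_k}$ acts coordinate-wise. From condition (b) I would record two facts at the outset: the unique point of $\Omega$ fixed by $N_{\a}$ is $\a$ itself, and every nontrivial $N_{\a}$-orbit has size $\prod_{i : \gamma_i \neq \delta_i} |\gamma_i^{T_{\delta_i}}|$, a product of integers each at least $2$.

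Next I would exploit $N_{\a} \normeq G_{\a}$. On any $G_{\a}$-orbit the $N_{\a}$-orbits all share a common size, so a $G_{\a}$-orbit $\mathcal{O}$ of prime length $q$ is either a single $N_{\a}$-orbit of size $q$, or a union of $q$ singleton $N_{\a}$-orbits. The second possibility is ruled out because $\a$ is the only $N_{\a}$-fixed point and $\a \notin \mathcal{O}$. Hence $\mathcal{O} = \b^{N_{\a}}$ for a point $\b$ with exactly one nontrivial coordinate, say in position $i_0$, satisfying $|\gamma_{i_0}^{T_{\delta_{i_0}}}| = q$; in particular $\soc(L)$ has a suborbit of length $q$ on $\Delta$, which is one of the required conclusions.

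The heart of the argument is forcing $k = 2$ and $K = 1$. Since $N$ is transitive (condition (a)) we have $G = N G_{\a}$, and as $N$ lies in the base group the projection $G \to K$ maps $G_{\a}$ onto $K$. I would then show that every $g \in G_{\a}$ fixes the coordinate $i_0$: writing $g = (l_1, \ldots, l_k)\sigma$ with $\sigma \in K$ and using $\a^g = \a$, a direct computation in the product action shows that $\b^g$ has its unique nontrivial coordinate in position $i_0\sigma$; since every point of $\mathcal{O} = \b^{N_{\a}}$ has nontrivial coordinate in position $i_0$, this forces $i_0\sigma = i_0$. Thus $K$, being the image of $G_{\a}$, fixes $i_0$. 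Condition (c) now finishes the first part: if $K$ were transitive then $k = 1$, contradicting $k \geqs 2$, so $K$ has two orbits of equal size on the $k$ factors; but $K$ fixes $i_0$, so the orbit of $i_0$ is a singleton and hence both orbits are singletons, giving $k = 2$ and $K = 1$.

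For the final assertion, suppose $\b^{G_{\a}}$ is a self-paired suborbit of prime length. By the above $k = 2$ and $K = 1$, so $G \leqs L \times L$ preserves each coordinate and $\b$ differs from $\a$ in exactly one coordinate. Grouping the points of $\Delta^2$ by the value of the coordinate in which $\b$ agrees with $\a$ yields a nontrivial $G$-invariant block system with $\a$ and $\b$ lying in a common block, so Lemma \ref{lem:imprimsubdeg}(ii) shows the corresponding orbital digraph is disconnected. The main obstacle is the bookkeeping in the product action needed to locate the nontrivial coordinate of $\b^g$ in position $i_0\sigma$; once that is in hand, the remaining deductions are short.
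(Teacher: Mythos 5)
Your proof is correct and follows essentially the same route as the paper: normality of $N_\alpha$ in $G_\alpha$ forces the prime-length suborbit to be a single $N_\alpha$-orbit whose support is a singleton $\{i_0\}$, the factorisation $G = NG_\alpha$ makes $G_\alpha$ induce all of $K$ so that $i_0^K = \{i_0\}$, and condition (c) then gives $k=2$ and $K=1$. The only cosmetic difference is the final step, where you deduce disconnectedness from Lemma \ref{lem:imprimsubdeg}(ii) via the coordinate-fibre block system, while the paper applies Lemma \ref{l:con} by noting that any $g$ interchanging $\alpha$ and $\omega$ fixes the repeated coordinate, so $\langle G_\alpha, g\rangle \neq G$; the two arguments are equivalent.
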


\begin{proof}
Let $\alpha=(\delta,\ldots,\delta)\in\Omega$ and suppose $\omega^{G_\alpha}$ is a self-paired suborbit of prime length, where $\omega = (\omega_1, \ldots, \omega_k)\in\Omega\setminus\{\alpha\}$. Note that $\omega$ differs from $\alpha$ in at least one coordinate and since the only element of $\Delta$ fixed by $\soc(L)_\delta$ is $\delta$, it follows that $\alpha$ is the only element of $\Omega$ fixed by $N_\alpha=(\soc(L)_\delta)^k$. Therefore $|\omega^{N_{\alpha}}|>1$ divides $|\omega^{G_\alpha}|$ and thus $\omega^{N_\alpha}=\omega^{G_\alpha}$. 

For each element $\beta = (\b_1, \ldots, \b_k) \in \Omega$, we define the \emph{support} of $\beta$ to be the set $\{i \mid \b_i \ne \delta\}$. Let $I$ be the support of $\omega$, and note that $I$ is also the support of each element of $\omega^{N_\alpha}$. Now $|\omega^{N_{\alpha}}|=\prod_{i\in I}|\omega_i^{\soc(L)_\delta}|$ and each term in the product is greater than $1$ since $\omega_i\neq\delta$. But $|\omega^{N_\alpha}| = |\omega^{G_\alpha}|$ is a prime, so we must have $I=\{i\}$ for some $i$.

Since $N$ is transitive on $\Omega$ we have $G=NG_\alpha$ and so $G_\alpha$ also induces the group $K$ on the set of $k$ simple direct factors of $N$. Therefore, for each $j$ in the $K$-orbit $i^K$, there is an element of $\omega^{G_\alpha}$ whose support is $\{j\}$. Since $\omega^{N_\alpha}=\omega^{G_\alpha}$ it follows that $i^K=i$ and thus $k=2$ and $K=1$, so 
$\soc(L) \times \soc(L) \normeq G \leqs L \times L$.

Without loss of generality, we may assume that $\omega=(\gamma,\delta)$ where $|\gamma^{\soc(L)_{\delta}}|$ is a prime. If $g = (g_1,g_2)\in G$ interchanges $\alpha$ and $\omega$, then $\delta^{g_2}=\delta$. Therefore, $\delta^{h_2}=\delta$ for each $(h_1,h_2)\in \langle G_\alpha,g\rangle$ and thus $G\neq \langle G_\alpha,g\rangle$. In particular, Lemma \ref{l:con} implies that the orbital graph corresponding to $\omega^{G_{\a}}$ is disconnected.
\end{proof}

\begin{lemma}\label{l:qgraph}
Let $G \leqs {\rm Sym}(\Omega)$ be a finite $2'$-elusive quasiprimitive  permutation group with a non-simple socle. 
Then $G$ does not have a prime subdegree that corresponds to a connected orbital graph.
\end{lemma}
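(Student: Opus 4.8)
The plan is to read off the structure of $G$ from Theorems \ref{t:prim2dash} and \ref{t:qprim2dash} and then feed it into the subdegree analysis of Lemma \ref{lem:prodactionsubdegs}. Since $G$ is $2'$-elusive and quasiprimitive with non-simple socle $N=T^k$, we have $k\geqs 2$ and exactly one of two situations occurs. Either $G$ is primitive and acts with its product action on $\Omega=\Delta^k$ with $L={\rm M}_{11}$ or ${}^2F_4(2)$ (Theorem \ref{t:prim2dash}); or $G$ is imprimitive and acts faithfully with its product action on a nontrivial system of imprimitivity $\mathcal{P}$ that we identify with $\Delta^k$, where $\soc(L)=\PSL_2(p)$ has point stabiliser $C_p{:}C_{(p-1)/2}$ on $\Delta$ (Theorem \ref{t:qprim2dash}). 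In both cases the induced group $K\leqs S_k$ is transitive by the minimality of $N$. First I would check that hypotheses (a)--(c) of Lemma \ref{lem:prodactionsubdegs} hold for this product action: (a) is quasiprimitivity; (b) holds because $\soc(L)_\delta$ fixes a unique point of $\Delta$ (for ${\rm M}_{11}$ and $\PSL_2(p)$ this is $2$-transitivity on $\Delta$, and for ${}^2F_4(2)'$ it follows from the unique subdegree $1$ recorded in Lemma \ref{lem:smallsubdegs}); and (c) holds since $K$ is transitive. As $K$ is transitive with $k\geqs 2$ we have $K\neq 1$, so Lemma \ref{lem:prodactionsubdegs} yields that $G$ has \emph{no} prime subdegree on $\Delta^k$.

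In the primitive case $\Delta^k=\Omega$, and this already gives the lemma. So the real content is the imprimitive case, where $\Delta^k=\mathcal{P}$ is only a quotient of $\Omega$ and I must transfer a putative prime subdegree on $\Omega$ up to a prime subdegree on $\mathcal{P}$. Suppose for a contradiction that $\omega^{G_\alpha}$ is a suborbit of prime length $\ell$ whose orbital graph is connected, and let $B,A\in\mathcal{P}$ be the blocks containing $\alpha,\omega$ respectively. By Lemma \ref{lem:imprimsubdeg}(ii) connectedness forces $A\neq B$, and by Lemma \ref{lem:imprimsubdeg}(i) the length $|A^{G_\alpha}|$ divides $\ell$.

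The key step is to show that $G_\alpha$ and the block stabiliser $G_B$ have the same orbits on $\mathcal{P}$, so that $|A^{G_B}|$ is a genuine subdegree of $G$ on $\mathcal{P}$ with $|A^{G_B}|=|A^{G_\alpha}|$. I would first prove that the point stabiliser $N_\alpha$ and the block stabiliser $N_B=(C_p{:}C_{(p-1)/2})^k$ induce the same orbits on $\mathcal{P}$. Indeed, Theorem \ref{t:qprim2dash} gives $(C_p{:}C_r)^k\leqs N_\alpha\leqs N_B$, and both $C_p{:}C_r$ and $C_p{:}C_{(p-1)/2}$ have exactly the orbits $\{\delta\}$ and $\Delta\setminus\{\delta\}$ on $\Delta$ — here one uses that the unipotent radical $C_p$ already acts regularly on the $p$ points of $\Delta\setminus\{\delta\}$ — so both $(C_p{:}C_r)^k$ and $N_B$ have the same product (``support'') orbits on $\Delta^k$, whence the squeezed group $N_\alpha$ does too. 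Since $N$ is transitive on $\mathcal{P}$ we have $G=NG_\alpha=NG_B$; writing any $g\in G_B$ as $g=nh$ with $n\in N$ and $h\in G_\alpha$ forces $n\in N_B$, and replacing $n$ by an element of $N_\alpha$ lying in the same $N_B$-orbit then shows $x^{G_B}=x^{G_\alpha}$ for every $x\in\mathcal{P}$. In particular $B$ is the unique block fixed by $N_B$, and hence by $G_B$, so $A\neq B$ gives $|A^{G_B}|>1$; combined with the fact that $|A^{G_B}|=|A^{G_\alpha}|$ divides $\ell$, this forces $|A^{G_B}|=\ell$. Thus $A^{G_B}$ is a prime subdegree of $G$ on $\mathcal{P}$, contradicting the first paragraph.

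I expect this orbit-transfer to be the main obstacle: the subdegrees of $G$ on $\Omega$ are controlled by $G_\alpha$, whereas Lemma \ref{lem:prodactionsubdegs} speaks about the block stabiliser $G_B$, and bridging the two relies on the precise fact that $N_\alpha$ and $N_B$ induce identical orbits on $\mathcal{P}$ — which in turn rests on the transitivity of $C_p{:}C_r$ (equivalently of the unipotent radical $C_p$) on $\Delta\setminus\{\delta\}$. Once this is in hand, the remainder is a direct appeal to Lemmas \ref{lem:prodactionsubdegs} and \ref{lem:imprimsubdeg} together with the structure theorems.
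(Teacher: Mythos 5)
Your proof is correct, and in the primitive case it coincides with the paper's (Theorem \ref{t:prim2dash} plus Lemma \ref{lem:prodactionsubdegs}, with the hypothesis checks you carry out left implicit in the paper). In the imprimitive case, however, you take a genuinely different route. The paper never transfers the prime subdegree to the action on $\mathcal{P}$: instead it works directly with ${\rm O}_p(N_\alpha)\normeq G_\alpha$, noting that ${\rm O}_p(\soc(L)_\delta)=C_p$ has orbits $\{\delta\}$ and $\Delta\setminus\{\delta\}$, so the orbit of the block containing $\omega$ under ${\rm O}_p(N_\alpha)$ has size $p^\ell$ with $\ell$ the number of non-$\delta$ coordinates; divisibility into the prime $|\omega^{G_\alpha}|$ forces $\ell=1$ and $B^{G_\alpha}=B^{{\rm O}_p(N_\alpha)}$, so every block in this orbit differs from the block of $\alpha$ in the \emph{same} coordinate, contradicting the transitivity of $G_\alpha$ on the $k$ simple factors. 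You instead prove the orbit-coincidence $x^{G_\alpha}=x^{G_B}$ for all $x\in\mathcal{P}$, by sandwiching $N_\alpha$ between $(C_p{:}C_r)^k$ and $N_B=(C_p{:}C_{(p-1)/2})^k$ (which have identical orbits on $\Delta^k$ because $C_p$ is already regular on $\Delta\setminus\{\delta\}$) and using the factorisation $G=NG_\alpha$; this converts the hypothetical prime subdegree on $\Omega$ into a genuine prime subdegree of $G$ on $\mathcal{P}$, which Lemma \ref{lem:prodactionsubdegs} (first conclusion, which correctly needs no self-pairedness) forbids since $K$ is transitive and $k\geqs 2$. Both arguments rest on the same two facts --- regularity of $C_p$ on $\Delta\setminus\{\delta\}$ and coordinate-transitivity --- so the difference is organisational: your version is more modular, quoting Lemma \ref{lem:prodactionsubdegs} once for both cases, and your transfer claim $x^{G_\alpha}=x^{G_B}$ is a clean statement of independent interest; the paper's inline computation is shorter and yields the extra information that the putative prime would have to equal $p$. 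One phrase of yours is slightly loose (``replacing $n$ by an element of $N_\alpha$ lying in the same $N_B$-orbit'' should read: since $x^n\in x^{N_B}=x^{N_\alpha}$ there is $m\in N_\alpha$ with $x^n=x^m$, whence $x^g=x^{mh}$ with $mh\in G_\alpha$), but the intended argument is sound.
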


\begin{proof}
Write $N=\soc(G) = \soc(L)^k$, where $k \geqs 2$ and $L \leqs {\rm Sym}(\Delta)$ is the transitive almost simple group described in Theorems \ref{t:prim2dash} and \ref{t:qprim2dash}. Let $\alpha\in\Omega$. Seeking a contradiction, suppose that $\omega^{G_{\a}}$ is a self-paired suborbit of prime length and the corresponding orbital graph is connected. 

If $G$ is primitive then Theorem \ref{t:prim2dash} implies that $G$ induces a transitive permutation group on the set of simple direct factors of its socle, so $G$ does not have a prime subdegree by Lemma \ref{lem:prodactionsubdegs}. For the remainder, we may assume that $G$ is imprimitive, in which case the structure of $G$ is described in Theorem \ref{t:qprim2dash}. In particular, $\soc(L)=\PSL_2(p)$ with $p$ a Mersenne prime.

Now $G$ acts faithfully on a nontrivial system of imprimitivity $\mathcal{P}$ for $\Omega$, which we may identify with the Cartesian product $\Delta^k$. Let $A\in\mathcal{P}$ be the block containing $\alpha$. Without loss of generality, we may assume that $A=(\delta,\ldots,\delta)$ and $\soc(L)_\delta=C_p{:}C_{(p-1)/2}$. In particular, $|\Delta| = |\soc(L):\soc(L)_{\delta}|=p+1$. Since we are assuming that the orbital graph corresponding to $\omega^{G_{\a}}$ is connected, Lemma \ref{lem:imprimsubdeg} implies that $\omega\notin A$. Moreover, if $B = (b_1, \ldots, b_k) \in\mathcal{P}$ is the block containing $\omega$, then the same lemma also implies that $|B^{G_{\alpha}}|$ divides $|\omega^{G_{\alpha}}|$. 

As in the statement of Theorem \ref{t:qprim2dash}, 
$$(C_p{:}C_r)^k\leqslant N_\alpha < (C_p{:}C_{(p-1)/2})^k=N_A,$$
where $r$ is the product of the distinct prime divisors of $(p-1)/2$. Note that ${\rm O}_p(N_\alpha)\normeq G_\alpha$ and thus $|B^{{\rm O}_p(N_\alpha)}|$ divides $|B^{G_{\alpha}}|$. Now  ${\rm O}_p(\soc(L)_\delta)= C_p$ has one fixed point and one orbit of length $p$ on $\Delta$, hence $|B^{{\rm O}_p(N_\alpha)}|=p^\ell$, where $\ell = |\{i \mid b_i \ne \delta\}|$. Since $|\omega^{G_{\a}}|$ is a prime, we deduce that $\ell=1$ and 
$$|\omega^{G_{\alpha}}|=|B^{G_{\alpha}}| \textrm{ and } B^{G_{\alpha}}=B^{{\rm O}_p(N_\alpha)}.$$ 
In particular, each element of $B^{{\rm O}_p(N_\alpha)}$ differs from $A$ in precisely the same coordinate. Now $G$ transitively permutes the $k$ simple direct factors of $N$ and we have $G=NG_\alpha$ (since $N$ is transitive on $\Omega$), hence $G_\alpha$ also acts transitively on the factors of $N$. Therefore, for each $i \in \{1, \ldots, k\}$ there is an element of $B^{G_\alpha}$ that differs from $A$ in the $i$-th coordinate. This is a contradiction.
\end{proof}

Finally we turn our attention to $2'$-elusive biquasiprimitive groups. We define $\Delta_1$, $\Delta_2$ and $G^+$ as in the first paragraph of Section \ref{s:bquasi}.

\begin{lemma}\label{lem:sdc} 
Let $G \leqs {\rm Sym}(\Omega)$ be a finite biquasiprimitive permutation group such that $|\Omega| >2$ and the actions of $G^+$ on $\Delta_1$ and $\Delta_2$ are permutation isomorphic. If $\Gamma$ is a connected orbital graph of $G$, then $\Gamma$ is the standard double cover of a connected orbital graph of $G^+$ on $\Delta_1$.
\end{lemma}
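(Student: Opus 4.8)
The plan is to use connectivity to locate the edges relative to the bipartition, fold the two halves together via the permutation isomorphism, and then check the double cover structure; the genuinely delicate point is the self-pairing of the base graph.

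\medskip

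First I would record that $\Gamma$ is bipartite with parts $\Delta_1,\Delta_2$. Write $\Gamma$ as the orbital graph of a self-paired suborbit $\beta^{G_\alpha}$, so $E\Gamma=\{\alpha,\beta\}^{G}$, while $G^{+}\leqs\Aut\Gamma$ preserves $\{\Delta_1,\Delta_2\}$ and every element of $G\setminus G^{+}$ interchanges them. If $\alpha$ and $\beta$ lay in the same part, then each $G^{+}$-translate of $\{\alpha,\beta\}$ would have both ends in one part and each $(G\setminus G^{+})$-translate both ends in the other, so $\Gamma$ would have no edge joining $\Delta_1$ to $\Delta_2$; as both parts are nonempty and $|\Omega|>2$, this makes $\Gamma$ disconnected, contrary to hypothesis. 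Hence $\beta\in\Delta_2$, and since $G$ is edge-transitive every edge of $\Gamma$ crosses the bipartition, i.e.\ $\Gamma$ is bipartite with the given parts.

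\medskip

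Next I would fold. Let $\psi\colon\Delta_1\to\Delta_2$ be a $G^{+}$-equivariant bijection afforded by the hypothesis, and use it to identify $\Omega$ with $\Delta_1\times\{0,1\}$, sending $x\in\Delta_1$ to $(x,0)$ and $\psi(x)$ to $(x,1)$. Put $\beta_0=\psi^{-1}(\beta)\in\Delta_1$ and let $\Sigma$ be the orbital graph of $G^{+}$ on $\Delta_1$ determined by the suborbit $\beta_0^{\,G^{+}_\alpha}$. Because $\psi$ is $G^{+}$-equivariant, the $\Gamma$-neighbours of $\alpha$ (a $G^{+}_\alpha$-orbit inside $\Delta_2$) pull back under $\psi^{-1}$ to exactly $\beta_0^{\,G^{+}_\alpha}$, so $\Sigma$ is $G^{+}$-invariant and has the right valency. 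Under the identification an edge $\{u,\psi(v)\}$ of $\Gamma$ becomes the pair $\{(u,0),(v,1)\}$, and since all edges cross the bipartition this coincides with the edge set of the standard double cover of $\Sigma$ precisely when $\Sigma$ is undirected. Granting that, $\Gamma$ is the standard double cover of $\Sigma$; and as the standard double cover of a disconnected graph is disconnected, connectedness of $\Gamma$ forces $\Sigma$ connected, as required.

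\medskip

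The remaining, and principal, obstacle is to show that $\beta_0^{\,G^{+}_\alpha}$ is self-paired, equivalently that the layer-swap $\tau=\psi\sqcup\psi^{-1}$ (which centralises $G^{+}$) lies in $\Aut\Gamma$, equivalently that $\{\psi(\alpha),\beta_0\}\in E\Gamma$. Since $G^{+}$ is transitive on the edges of $\Gamma$ and $\tau$ centralises $G^{+}$, it is enough to check that $\tau$ carries the single edge $\{\alpha,\beta\}$ to an edge. Here I would exploit the self-pairing element $s\in G\setminus G^{+}$ interchanging $\alpha$ and $\beta$: it satisfies $s^{2}\in G_\alpha\cap G_\beta$, while $G^{+}$-equivariance of $\psi$ forces $G_\beta=G_{\beta_0}$ and hence $s^{-1}G_\alpha s=G_{\beta_0}$. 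Encoding $\psi$ by an element $w\in N_G(G_\alpha)\setminus G^{+}$ with $\beta_0=\alpha^{ws}$, the self-pairing of $\Sigma$ becomes symmetry of the double coset $G_\alpha\,(ws)\,G_\alpha$; I would establish this by exhibiting $m=ws\in G^{+}$ with $\alpha^{m}=\beta_0$ and $m^{2}\in G_\alpha$ (so that $\beta_0^{m}=\alpha$), choosing $w$ within its $N_G(G_\alpha)$-coset so that $(ws)^{2}\in G_\alpha$, and invoking connectedness in the form $G=\langle G_\alpha,s\rangle$ (Lemma \ref{l:con}) to exclude the configurations where no such choice is available. Converting the $G$-self-pairing of $\Gamma$ into a $G^{+}$-self-pairing of the folded suborbit is where essentially all of the work lies; by contrast the bipartite reduction and the double cover bookkeeping are routine.
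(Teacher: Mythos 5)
Your first two steps agree with the paper's proof and are essentially correct: the reduction showing every edge crosses the bipartition, the folding of $\Omega$ onto $\Delta_1\times\{0,1\}$ via the $G^{+}$-equivariant bijection $\psi$, the identification of the edge set of $\Gamma$ with the double-cover edge set of the folded orbital digraph $\Sigma$ (using $G^{+}$-edge-transitivity, which you assert but which follows easily since any element of $G$ carrying an arc with initial vertex in $\Delta_1$ to another such arc must lie in $G^{+}$), and the inheritance of connectivity by $\Sigma$. You also correctly isolate the crux: everything reduces to showing the folded suborbit $\beta_0^{G_\alpha}$ is self-paired in $G^{+}$. The genuine gap is that your treatment of this crux is circular rather than a proof. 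Once you fix $s$ and produce $m_0=w_0s\in G^{+}$ with $\alpha^{m_0}=\beta_0$, the set of elements of $G^{+}$ carrying $\alpha$ to $\beta_0$ is exactly the coset $G_\alpha m_0$, and for any such element $m$ the condition $m^{2}\in G_\alpha$ is \emph{equivalent} to $m$ interchanging $\alpha$ and $\beta_0$, i.e.\ to the self-pairedness you are trying to prove. So ``choosing $w$ within its coset so that $(ws)^{2}\in G_\alpha$'' merely restates the goal, and the only tool you offer for ``excluding the configurations where no such choice is available'' is the bare fact $G=\langle G_\alpha,s\rangle$ from Lemma \ref{l:con}, with no mechanism attached. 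The statement is genuinely delicate: without connectivity it is false (for instance, if $G^{+}$ acts regularly on each orbit, the folded digraph can be a non-self-paired Cayley digraph while the $G$-orbital on $\Omega$ is self-paired via an element of $G\setminus G^{+}$; the orbital graph is then a perfect matching), so no amount of the double-coset bookkeeping you set up can close the gap on its own.

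The paper's resolution, which your proposal misses entirely, is to run the pairing argument at the point $\psi(\alpha)\in\Delta_2$ instead of at $\alpha$. Equivariance forces the exact stabiliser equality $G_{\psi(\alpha)}=G_\alpha$, so the set $\omega^{G_\alpha}$ is also a suborbit of $G$ relative to the base point $\psi(\alpha)$ (and $\psi(\alpha)\notin\omega^{G_\alpha}$, since connectivity and $|\Omega|>2$ give valency at least $2$ while $G_\alpha$ fixes $\psi(\alpha)$). Self-pairing of this suborbit at $\psi(\alpha)$ yields $g\in G$ with $\psi(\alpha)^{g}=\omega$ and $\omega^{g}=\psi(\alpha)$, and the decisive point is parity: since both $\psi(\alpha)$ and $\omega$ lie in $\Delta_2$, such a $g$ automatically lies in $G^{+}$, whereupon equivariance converts it into an element of $G^{+}$ interchanging $\alpha$ and $\psi^{-1}(\omega)=\beta_0$ — exactly the required self-pairing. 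Your observation that the layer-swap $\tau$ centralises $G^{+}$, and the equivalence of $\tau\in\Aut(\Gamma)$ with the self-pairing, are correct but do not substitute for this step; in particular, the swapping element the paper produces need not be related to $s$ or to any $w\in N_G(G_\alpha)\setminus G^{+}$ in the way your sketch presupposes.
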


\begin{proof}
Let $\Gamma$ be the connected orbital graph corresponding to a suborbit $\omega^{G_\alpha}$ with $\a \in \Delta_1$. Since the actions of $G^+$ on $\Delta_1$ and $\Delta_2$ are 
permutation isomorphic, there exists a bijection $\varphi:\Delta_1\rightarrow \Delta_2$ such that $\varphi(\alpha^g)= \varphi(\alpha)^g$ for all $\alpha\in\Delta_1$ and $g\in G^+$. In particular, $G_\alpha=G_{\varphi(\alpha)}$.  Let $\psi:V\Gamma \rightarrow \Delta_1\times \{0,1\}$ be the bijection such that $\psi(\beta)=(\beta,0)$ for each $\beta\in \Delta_1$ and $\psi(\beta)=(\varphi^{-1}(\beta),1)$ for each $\beta\in \Delta_2$.

By Lemma \ref{lem:imprimsubdeg}, $\omega^{G_\alpha}\subseteq \Delta_2$.  Moreover, since $|\Omega|>2$ and $\Gamma$ is connected, it follows that $\Gamma$ is $k$-regular, where $k = |\omega^{G_{\a}}| \geqs 2$. In particular, $\varphi(\alpha)\notin \omega^{G_\alpha}$. Let $O_1=\varphi^{-1}(\omega^{G_\alpha}) = \varphi^{-1}(\omega)^{G_\alpha}$, so $O_1$ is an orbit of $G_\alpha = (G^{+})_{\a}$ on $\Delta_1\setminus\{\alpha\}$. We claim that $O_1$ is self-paired (as a suborbit of $G^{+}$). To see this, first observe that $\omega^{G_\alpha} = \omega^{G_{\varphi(\alpha)}}$ is self-paired, so there exists an element $g \in G$ such that $\omega^g = \varphi(\a)$ and $\varphi(\a)^g = \omega$. Therefore, $g$ interchanges $\varphi^{-1}(\omega)$ and $\a$, and we note that $g \in G^{+}$ since $\varphi(\a), \omega \in \Delta_2$. This justifies the claim.

Recall that $G$ acts arc-transitively on $\Gamma$, so $G^+$ acts transitively on the set of arcs of $\Gamma$ of the form $(u,v)$ with $u \in \Delta_1$. Since each edge of $\Gamma$ corresponds to a unique such arc, it follows that $G^+$ is transitive on the set of edges of $\Gamma$. In particular, every edge of $\Gamma$ is of the form $\{\a^g,\omega^g\}$ for some $g \in G^+$.

Set $\delta=\varphi^{-1}(\omega)\in O_1$ and let $\Sigma$ be the orbital graph of $G^+$ on $\Delta_1$ corresponding to $O_1$. Then the edges of the standard double cover of $\Sigma$ are of the form $\{(\alpha^g,0),(\delta^g,1)\}$ for $g\in G^+$. Since $\delta^g=\varphi^{-1}(\omega)^g=\varphi^{-1}(\omega^g)$ it follows that $\{(\alpha^g,0),(\delta^g,1)\}$ is the image of the edge $\{\alpha^g,\omega^g\}$ of $\Gamma$ under $\psi$.
Therefore, $\Gamma$ is isomorphic to the standard double cover of $\Sigma$. Note that since $\Gamma$ is connected then so is $\Sigma$.
\end{proof}

\begin{lemma}\label{l:bqgraph}
Let $G \leqs {\rm Sym}(\Omega)$ be a finite $2'$-elusive biquasiprimitive permutation group with point stabiliser $H$. Then $G$ has a connected orbital graph $\Gamma$ of prime valency $p$ if and only if $(G,H) = (\PGL_2(p), C_p{:}C_s)$, where $p$ is a Mersenne prime, $C_r \leqs C_s < C_{(p-1)/2}$ and $r$ is the product of the distinct prime divisors of $(p-1)/2$. Moreover, $\Gamma$ is the standard double cover of a connected $p$-regular orbital graph of $\PSL_2(p)$.
\end{lemma}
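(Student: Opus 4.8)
The plan is to prove both directions by going through the classification of $2'$-elusive biquasiprimitive groups in Theorem \ref{t:bqprim2dash}. Fix $\alpha\in\Delta_1$, so $H=G_\alpha$, and write $N=\soc(G)=T_1\times\cdots\times T_k$. A connected orbital graph $\Gamma$ of prime valency $p$ corresponds to a self-paired suborbit $\omega^{G_\alpha}$ of length $p$; since $\{\Delta_1,\Delta_2\}$ is a $G$-invariant partition with $\alpha\in\Delta_1$, Lemma \ref{lem:imprimsubdeg}(ii) forces $\omega\in\Delta_2$, so $\Gamma$ is bipartite with parts $\Delta_1,\Delta_2$ and its valency is the length of a $G_\alpha$-suborbit inside $\Delta_2$. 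When $k=1$ the group $G$ is almost simple, and the biquasiprimitive possibilities are $({\rm M}_{10},A_5)$, $(\Aut(A_6),S_5)$, $({}^2F_4(2),\PSL_2(25))$ and $(\PGL_2(p),C_p{:}C_s)$; here Lemma \ref{lem:smallsubdegs} settles everything, since the first two have only an internal (hence disconnected) prime suborbit, the third has no prime subdegree at all, and only the last has a connected self-paired suborbit of prime length $p$.

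So I may assume $k\geqs 2$. I would dispose of the unfaithful cases of Theorem \ref{t:bqprim2dash}(c) first, by a uniform argument. In each of (c)(i)--(iii) the factors split into two $K^+$-orbits $O_1,O_2$ of size $k/2$, and the shape of $N_\alpha$ gives $\prod_{i\in O_2}T_i\leqs N_\alpha$. This normal subgroup of $N$ fixes $\Delta_1$ pointwise and acts transitively on $\Delta_2$, so $G_\alpha$ is transitive on $\Delta_2$ and the only suborbit inside $\Delta_2$ is $\Delta_2$ itself. But $|\Delta_2|$ equals $12^{k/2}$, $2304^{k/2}$ or $\prod_{i\in O_2}(p^2-1)/(2s_i)$, each of which is composite (in the last case every factor is at least $p+1\geqs 128$). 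Hence no suborbit in $\Delta_2$ has prime length and case (c) is impossible.

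For the faithful cases (b)(ii)--(iv) with $k\geqs 2$ the key device is Lemma \ref{lem:sdc}. Its hypothesis that $\Delta_1$ and $\Delta_2$ are isomorphic $G^+$-sets must be checked; I would do this by exhibiting an element of $G\setminus G^+$ normalising $H$, which exists because the relevant factor stabilisers ($\PSL_2(11)$, $\PSL_2(25)$, and the Borel-type subgroup in (iv)) are self-normalising in their simple factor, so the factor-permuting coset outside $G^+$ lies in $N_G(H)$. Granting this, any connected $\Gamma$ is the standard double cover of a connected, self-paired, $p$-regular orbital graph $\Sigma$ of $G^+$ on $\Delta_1$, and it suffices to rule out $\Sigma$. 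In cases (ii) and (iii), $G^+$ acts on $\Delta_1=\Delta^k$ in product action with point stabiliser $\soc(L)_\delta^k$, where $\soc(L)_\delta$ is a self-normalising maximal subgroup fixing only $\delta$; the hypotheses of Lemma \ref{lem:prodactionsubdegs} then hold and that lemma forces $\Sigma$ to be disconnected, a contradiction.

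The main obstacle is the $\PSL_2(p)$ case (b)(iv) with $k\geqs 2$, where $N_\alpha$ is only subdirect and Lemma \ref{lem:prodactionsubdegs} does not apply. Here I would reproduce the block-system argument from the proof of Lemma \ref{l:qgraph} for $G^+$ acting on $\Delta_1$: pass to the system of imprimitivity $\mathcal{P}_1\cong(\Delta')^k$ with $|\Delta'|=p+1$, note that ${\rm O}_p(N_\alpha)=C_p^k\normeq G^+_\alpha$, and use that $|B^{{\rm O}_p(N_\alpha)}|=p^\ell$ divides the prime $p$ to force the block $B$ of $\omega$ to differ from the block of $\alpha$ in exactly one coordinate $i$. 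Since $G^+=NG^+_\alpha$ induces $K^+$ on the coordinates, this yields $i^{K^+}=\{i\}$, which is impossible unless $k=2$ and $K^+=1$; and in that last case $G^+\leqs\PGL_2(p)^2$ preserves the other coordinate, so $\Sigma$ is disconnected after all. Thus $k=1$, and combining with the first paragraph we conclude $(G,H)=(\PGL_2(p),C_p{:}C_s)$ with $C_r\leqs C_s<C_{(p-1)/2}$. For the converse, biquasiprimitivity of $(\PGL_2(p),C_p{:}C_s)$ is immediate (the normal subgroups of $\PGL_2(p)$ are $1,\PSL_2(p),\PGL_2(p)$, and $\PSL_2(p)$ has two orbits), $2'$-elusivity follows from Theorem \ref{t:2dashas}, and Lemma \ref{lem:smallsubdegs}(iv) supplies the required connected orbital graph $\Gamma$ of valency $p$. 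Finally, all subgroups $C_p{:}C_s$ being conjugate in $\PSL_2(p)$, the two $\PSL_2(p)$-orbits are equivalent, so a last application of Lemma \ref{lem:sdc} identifies $\Gamma$ as the standard double cover of the connected $p$-regular orbital graph of $\PSL_2(p)$ on $\Delta_1$ furnished by Lemma \ref{lem:smallsubdegs}(iii).
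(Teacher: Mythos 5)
Your proposal reproduces the paper's proof nearly step for step: the reduction $\omega\in\Delta_2$ via Lemma \ref{lem:imprimsubdeg}; the elimination of the unfaithful cases of Theorem \ref{t:bqprim2dash}(c) by noting that the kernel of $G^+$ on $\Delta_1$ lies in $G_\alpha$ and is transitive on $\Delta_2$, so any connected orbital graph would be complete bipartite of composite valency; the $k=1$ analysis via Lemma \ref{lem:smallsubdegs} (including discarding the $A_6$-cases because their prime suborbit lies inside $\Delta_1$, and $({}^2F_4(2),\PSL_2(25))$ because it has no prime subdegree); the passage via Lemma \ref{lem:sdc} to a connected prime-valency orbital graph $\Sigma$ of $G^+$ on $\Delta_1$; Lemma \ref{lem:prodactionsubdegs} for $T={\rm M}_{11}$ and ${}^2F_4(2)'$; and, for $T=\PSL_2(p)$ with $k\geqs 2$, the ${\rm O}_p(N_\alpha)$-orbit argument on the block system $\Delta^k$ forcing $\ell=1$, a $K^+$-fixed coordinate, hence $k=2$ and $K^+=1$, followed by disconnection of $\Sigma$ via the preserved second coordinate and Lemma \ref{l:con}. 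Your converse direction and the identification of $\Gamma$ as a standard double cover (using that all subgroups $C_p{:}C_s$ of $\PSL_2(p)$ are conjugate) likewise match the paper, which instead exhibits $g\in N_G(G_\alpha)\setminus G^+$ using $N_{\PGL_2(p)}(C_p{:}C_s)=C_p{:}C_{p-1}$.

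The one genuine defect is your verification of the hypothesis of Lemma \ref{lem:sdc} in case (b)(iv) with $k\geqs 2$. You assert that the ``Borel-type'' factor stabilisers are self-normalising in $\PSL_2(p)$, so that a factor-permuting element of $G\setminus G^+$ normalises $H$. This is false: the subgroups occurring there satisfy $C_p{:}C_r\leqs \pi_i(N_\alpha)\leqs C_p{:}C_{(p-1)/2}$, and $N_{\PSL_2(p)}(C_p{:}C_s)=C_p{:}C_{(p-1)/2}$ whenever $s<(p-1)/2$, so they are in general not self-normalising; moreover $N_\alpha$ is only a subdirect product, and, crucially, in case (iv) $H$ is a \emph{proper} subgroup of $N_{G^+}(N_\alpha)$, so even an element of $G\setminus G^+$ normalising $N_\alpha$ (which one can arrange after adjusting by an element of $N$, since the relevant subgroups form a single $N$-class) need not normalise $H$. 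For cases (b)(ii) and (b)(iii) your idea does work after this repair, because there $H=N_{G^+}(N_\alpha)$ and $\PSL_2(11)$, $\PSL_2(25)$ are self-normalising in their simple factors; but in (b)(iv) your argument establishes nothing, and this is exactly the point at which the paper itself is terse, asserting ``once again'' that the $G^+$-actions on $\Delta_1$ and $\Delta_2$ are permutation isomorphic without giving your (invalid) reason. So your architecture is the paper's, but the glue you supply for case (iv) is wrong as stated and needs replacing — for instance, one can avoid Lemma \ref{lem:sdc} there entirely by running the ${\rm O}_p$-block argument directly on $\Omega$, comparing ${\rm O}_p(N_\alpha)$ with ${\rm O}_p(N_\omega)$ and using that any $g$ interchanging $\alpha$ and $\omega$ then lies in $N_G(C_p^k)<G$.
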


\begin{proof}
Let $\omega^{G_{\a}}$ be a self-paired suborbit of prime length such that  the corresponding orbital graph $\Gamma$ is connected. Without loss of generality, we may assume that $\a \in \Delta_1$. Then $\omega\in\Delta_2$ by Lemma \ref{lem:imprimsubdeg}. 

First assume that $G^+$ is not faithful on at least one of its orbits, in which case the structure of $G$ is described in Theorem \ref{t:bqprim2dash}(c). In every case, we observe that the kernel of the action of $G^+$ on $\Delta_1$ is transitive on $\Delta_2$ (see the proof of Lemma \ref{lem:NG+unfaith}, for example). Therefore, any connected orbital graph arising from such a group is complete and bipartite. But $|\Delta_1|$ is not a prime, so this situation does not arise. 

For the remainder we may assume that $G^+$ is faithful on both orbits, in which case the structure of $G$ is given in Theorem \ref{t:bqprim2dash}(b). Let $N=T^k$ be the unique minimal normal subgroup of $G$, where $k \geqs 1$.  Let $K^+ \leqs S_k$ be the group induced by $G^+$ on the set of $k$ simple direct factors of $N$. Recall that $G^+=NG_{\a}$ (see Lemma \ref{lem:inGplus}). 

If $k=1$ then $G$ is almost simple and thus $(G,G_{\alpha}) = (\PGL_2(p), C_p{:}C_s)$ by Lemma \ref{lem:smallsubdegs}. Here $G_{\a}<T=G^+$ and $|N_G(G_{\a}):N_{G^+}(G_{\a})|=2$, so there exists an element $g \in N_{G}(G_{\a}) \setminus G^{+}$. Therefore, $(G^{+})_{\a} = G_{\a}  = G_{\a^g} = (G^{+})_{\a^g}$ and $\a^g \in \Delta_2$, so  
the actions of $G^+$ on $\Delta_1$ and $\Delta_2$ are permutation isomorphic. Therefore, Lemma \ref{lem:sdc} implies that $\Gamma$ is the standard double cover of a connected $p$-regular orbital graph of $T$ on $\Delta_1$.

To complete the proof, we may assume that $k \geqs 2$. Once again, the actions of $G^+$ on $\Delta_1$ and $\Delta_2$ are permutation isomorphic, so the lengths of the orbits of $G_\alpha$ on $\Delta_2$ are the same as those on $\Delta_1$. By Lemma \ref{lem:sdc}, $\Gamma$ is the standard double cover of a connected orbital graph $\Sigma$ of $G^+$ on $\Delta_1$ of prime valency. 
If $T={\rm M}_{11}$ or ${}^2F_4(2)'$ then $G^+$ acts on $\Delta_1$ with its standard product action, 
so by appealing to Lemma \ref{lem:prodactionsubdegs} we deduce that $G$ does not have an appropriate self-paired suborbit. This is a contradiction.  

Finally, let us assume that $k \geqs 2$ and $T=\PSL_2(p)$. To eliminate this case, we will show that $G^{+}$ does not have a connected orbital graph on $\Delta_1$ of prime valency. Seeking a contradiction, suppose $\beta^{(G^+)_{\a}}  = \beta^{G_{\a}} \subseteq \Delta_1$ is a self-paired suborbit of prime length with connected orbital graph $\Sigma$. Here $\Delta_1$ admits a $G^+$-invariant partition $\mathcal{P}$ that can be identified with an appropriate Cartesian product $\Delta^k$. Let $A\in\mathcal{P}$  be the block containing $\a$. Without loss of generality, we may assume that $A=(\delta,\ldots,\delta)$ for some $\delta\in\Delta$. Similarly, let $B\in \mathcal{P}$ be the block containing $\beta$. 

By Lemma \ref{lem:imprimsubdeg} it follows that $A\neq B$ and $|B^{G_\alpha}|$ divides $|\beta^{G_\alpha}|$. If $\beta^{G_\alpha}\subseteq B$ then the connectivity of $\Sigma$ implies that $\mathcal{P}=\{A,B\}$, which is a contradiction since $|\Delta^k|> 2$.  Therefore $|B^{G_\alpha}|=|\beta^{G_\alpha}|$ and by applying Lemma \ref{lem:prodactionsubdegs} we deduce that $k=2$ and $K^+=1$. In particular, $G^+ \leqs {\rm PGL}_{2}(p) \times {\rm PGL}_{2}(p)$. By arguing as in the proof of Lemma \ref{lem:prodactionsubdegs} we see that $B$ differs from $A$ in precisely one coordinate. Without loss of generality we may assume that $B=(\gamma,\delta)$ for some $\gamma \ne \delta$. If $g=(g_1,g_2)\in G^+$ interchanges $\alpha$ and $\beta$, then $g$ also interchanges $A$ and $B$ and we have $\delta^{g_2}=\delta$. It follows that 
$\delta^{h_2}=\delta$ for each $(h_1,h_2)\in\langle G_\alpha,g\rangle \leqslant \langle G_A,g\rangle$. Therefore, $\langle G_\alpha,g\rangle\neq G$ and thus Lemma \ref{l:con} implies that $\Sigma$ is disconnected, a contradiction.
\end{proof}

Finally, we are now ready to prove Theorem \ref{t:graph} and Corollaries \ref{c:prime} and \ref{c:6}.

\begin{proof}[Proof of Theorem \ref{t:graph}]
Let $\Gamma$ be a finite connected graph of prime valency $p$ and let $G\leqslant \Aut(\Gamma)$ be an arc-transitive group of automorphisms so that the action of $G$ on the vertex set $V\Gamma$ is either quasiprimitive or biquasiprimitive. We may assume that $G$ is $2'$-elusive on $V\Gamma$ (otherwise case (i) or (ii) in Theorem \ref{t:graph} holds). Then $G$ is almost 
simple by Lemmas \ref{l:qgraph} and \ref{l:bqgraph}, and the result now follows by applying Lemmas \ref{lem:smallsubdegs} and \ref{l:bqgraph}.
\end{proof}

\begin{proof}[Proof of Corollary \ref{c:prime}]
Let $\Gamma$ be a finite connected graph of prime valency and let $G\leqslant \Aut(\Gamma)$ be an elusive arc-transitive group of automorphisms. Since the valency is prime, for each vertex $v \in V\Gamma$, the action of $G_v$ on the set of neighbours of $v$ is primitive. Let $N$ be a normal subgroup of $G$ and suppose that $N$ has at least three orbits on vertices. Then by \cite[Lemma 1.6]{P85}, $N$ is semiregular, which contradicts the fact that $G$ is elusive. Therefore, $G$ is either quasiprimitive or biquasiprimitive on vertices, and thus $G$ and $\Gamma$ are given by Theorem \ref{t:graph}(ii)--(v).  If $|V\Gamma|$ is a power of two then \cite[Proposition 3.2]{maru81} implies that $G$ contains a derangement of order two, which is a contradiction (the proof in \cite{maru81} applies to any vertex-transitive subgroup, not just the full automorphism group). In cases (iv) and (v) of Theorem \ref{t:graph}, note that $|G|$ is even and $|G_v|$ is odd, so once again we deduce that $G$ contains a derangement of order two. Therefore, the only possibility is the example in part (iii), hence $G={\rm M}_{11}$ and $\Gamma$ is the complete graph $K_{12}$.
\end{proof}

\begin{proof}[Proof of Corollary \ref{c:6}]
Let $k$ be the smallest integer such that there is a finite connected graph of valency $k$ with an elusive arc-transitive group of automorphisms. By \cite[Theorem 1.1]{GMPV}, $k \geqs 5$. As discussed in the introduction to \cite{GMPV}, the example in \cite[Theorem 3.5(3)]{GMPV} shows that $k \leqs 6$. Since Corollary \ref{c:prime} shows that $k=5$ is not possible, we conclude that $k=6$.
\end{proof}

\end{document}